\newtheorem{Theorem}{Theorem}[section]
\newtheorem{Definition}[Theorem]{Definition}
\newtheorem{Proposition}[Theorem]{Proposition}
\newtheorem{Lemma}[Theorem]{Lemma}
\newtheorem{Corollary}[Theorem]{Corollary}
\newtheorem{Remark}[Theorem]{Remark}
\newtheorem{Hypothesis}{Hypothesis}
\numberwithin{equation}{section}
\begin{document}

\def\le{\left}
\def\r{\right}
\def\cost{\mbox{const}}
\def\a{\alpha}
\def\d{\delta}
\def\ph{\varphi}
\def\e{\epsilon}
\def\la{\lambda}
\def\si{\sigma}
\def\La{\Lambda}
\def\B{{\cal B}}
\def\A{{\mathcal A}}
\def\L{{\mathcal L}}
\def\O{{\mathcal O}}
\def\bO{\overline{{\mathcal O}}}
\def\F{{\mathcal F}}
\def\K{{\mathcal K}}
\def\H{{\mathcal H}}
\def\D{{\mathcal D}}
\def\C{{\mathcal C}}
\def\M{{\mathcal M}}
\def\N{{\mathcal N}}
\def\G{{\mathcal G}}
\def\T{{\mathcal T}}
\def\R{{\mathcal R}}
\def\I{{\mathcal I}}

\def\bw{\overline{W}}
\def\phin{\|\varphi\|_{0}}
\def\s0t{\sup_{t \in [0,T]}}
\def\lt{\lim_{t\rightarrow 0}}
\def\iot{\int_{0}^{t}}
\def\ioi{\int_0^{+\infty}}
\def\ds{\displaystyle}
\def\pag{\vfill\eject}
\def\fine{\par\vfill\supereject\end}
\def\acapo{\hfill\break}

\def\beq{\begin{equation}}
\def\eeq{\end{equation}}
\def\barr{\begin{array}}
\def\earr{\end{array}}
\def\vs{\vspace{.1mm}   \\}
\def\rd{\reals\,^{d}}
\def\rn{\reals\,^{n}}
\def\rr{\reals\,^{r}}
\def\bD{\overline{{\mathcal D}}}
\newcommand{\dimo}{\hfill \break {\bf Proof - }}
\newcommand{\nat}{\mathbb N}
\newcommand{\E}{\mathbb E}
\newcommand{\Pro}{\mathbb P}
\newcommand{\com}{{\scriptstyle \circ}}
\newcommand{\reals}{\mathbb R}

\title{A basic identity for Kolmogorov operators in the space of continuous functions related to RDEs with multiplicative noise\thanks{ {\em Key words and phrases:} Stochastic reaction-diffusion equations, Kolmogorov operators, Poincar\'e inequality, spectral gap, Sobolev spaces in infinite dimensional spaces .}}

\author{Sandra Cerrai\thanks{Partially supported by the NSF grant DMS0907295 ``Asymptotic Problems for SPDE's''.}
\\\normalsize University of Maryland, College Park, MD 20742-4015
\and
Giuseppe Da Prato\\
\normalsize Scuola Normale Superiore, 56126, Pisa, Italy
}

\maketitle

\begin{abstract}
We consider the Kolmogorov operator associated with a reaction-diffusion equation having polynomially growing reaction coefficient and perturbed by a noise of multiplicative type, in the Banach space $E$ of continuous functions. By analyzing the smoothing properties of the associated transition semigroup, we prove a modification of the classical  { \em identit\'e du carr\'e di champs} that applies to the present non-Hilbertian setting. As an application of this identity, we construct the Sobolev space $W^{1,2}(E;\mu)$, where $\mu$ is an invariant measure for the system, and we prove the validity of the Poincar\'e inequality and of the spectral gap.

\end{abstract}

\section{Introduction}
\label{sec1}

In the present paper we are concerned with the analysis of the Kolmogorov operator associated with the following reaction-diffusion equation in the interval $(0,1)$, perturbed by a noise of multiplicative type
\begin{equation}
\label{eq1}
\le\{\begin{array}{l}
\ds{\frac{\partial u}{\partial t}(t,\xi)=\frac{\partial^2 u}{\partial \xi^2}(t,\xi)+f(\xi,u(t,\xi))+g(\xi,u(t,\xi))\frac{\partial w}{\partial t}(t,\xi),\ \ \ \ t\geq 0,\ \ \ \xi \in\,[0,1],}\\
\vs
\ds{u(t,0)=u(t,1)=0,\ \ \ \ \ u(0,\xi)=x(\xi),\ \ \ \xi \in\,[0,1].}
\end{array}\r.
\end{equation}
Here $\partial w/\partial t(t,\xi)$ is a space-time white noise. The non-linear terms
$f,g:[0,1]\times \reals\rightarrow \reals$ are both 
 continuous, the mapping
$g(\xi,\cdot):\reals\rightarrow \reals$ is Lipschitz-continuous,
uniformly with respect to $\xi \in\,[0,1]$, without any  restriction
on its linear growth, and  the mapping $f(\xi,\cdot)$ has
polynomial growth, is locally Lipschitz-continuous and satisfies
suitable dissipativity conditions, uniformly with respect to $\xi
\in\,[0,1]$. The example of $f(\xi,\cdot)$  we have in mind  is an odd-degree polynomial, having negative leading coefficient.

In \cite{cerrai}, the well-posedness of equation \eqref{eq1} has been studied and it has been proved that for any initial datum $x \in\,E:=C_0([0,1])$ there exists a unique {\em mild} solution $u^x \in\,L^p(\Omega;C([0,T];E))$, for any $T>0$ and $p\geq 1$. This allows us to introduce the {\em Markov transition semigroup} $P_t$ associated with
 equation (\ref{eq1}), by setting for any Borel measurable
and bounded function $\varphi:E\rightarrow \reals$
\[P_t\varphi\,(x)=\E\,\varphi\,(u^x(t)),\ \ \ \ \ t\geq 0,\ \ \ x
\in\,E,\] where $u^x(t)$ is the solution of (\ref{eq1}) starting from $x$ at time $t=0$.
Hence, by proceeding as in \cite{cerrai94}, we can define the Kolmogorov operator  $\mathcal K$ associated with the semigroup $P_t$  in terms of its
 Laplace transform 
\begin{equation}
\label{e1.2t}
(\lambda-\mathcal K)^{-1}\varphi(x)=\int_0^\infty e^{-\lambda t}P_t\varphi(x)dt,\ \ \ \ \varphi\in C_b(E)\footnote{The space  of all uniformly continuous and bounded real-valued mappings defined on $E$.}.
\end{equation}

In this paper we are going to study some important properties of the Kolmogorov operator  $\mathcal K$ in $C_b(E)$ .
If we write equation \eqref{eq1} in the abstract form  
$$du(t)=\le[A u(t)+F(u(t))\r]\,dt+G(u(t))dw(t),$$
  (see  Section \ref{sec2}  below for all notations)
  then $\mathcal K$ reads formally as 
\begin{equation}
\label{e1.2s}
\mathcal K\varphi=\frac12\,\sum_{h=1}^\infty D^2\varphi(G(x)e_k,G(x)e_k)+\langle Ax+F(x), D\varphi(x) \rangle_E 
\end{equation}
(here $D\varphi$ and $D^2\varphi$ represent the first and the second derivatives of a twice differentiable function $\varphi:E\to \reals$ and  $\langle \cdot, \cdot \rangle_E$ is the duality between $E$ and its topological dual $E^*$).
Notice however that it is not easy to decide whether  a given function belongs to the domain of $\mathcal K$  or not, as it is  defined in an  abstract way by formula \eqref{e1.2t}. Our main concern here is studying some relevant properties of ${\cal K}$, such as the possibility to define the Sobolev space $W^{1,2}(E,\mu)$, with respect to the invariant measure $\mu$ for equation \eqref{eq1}, or the validity of the Poincar\'e inequality and of  the spectral gap, which, as well known, implies the exponential convergence to equilibrium. 

In the case of additive noise, that is when  $G(x)$ is constant, it is possible to study equation \eqref{eq1}  in the Hilbert space $H=L^2(0,1)$ in a generalized sense, so that the  associated  transition semigroup and Kolmogorov operator can be introduced. In this case, it has been proved that the so-called { \em identit\'e du carr\'e des champs}
\begin{equation}
\label{e1.3s}
\mathcal K(\varphi^2)=2\varphi\;\mathcal K\varphi+|G^*D\varphi|_H^2
\end{equation}
 is valid for functions $\varphi$ in a {\em core} of ${\cal K}$, that is a    subset of $D(\mathcal K)$ which is dense in the graph norm of $\mathcal K$ (see \cite{DDG}).
Identity \eqref{e1.3s} has several important consequences. Actually, if there exists an invariant measure $\mu$ for $u^x(t)$, identity \eqref{e1.3s} provides  the starting point to define the Sobolev space
$W^{1.2}(H,\mu)$. Moreover, under some additional conditions, it allows to prove the Poincar\'e inequality and the exponential convergence of $P_t\varphi$ to equilibrium ({\em spectral gap}).

\medskip

The case we are dealing with in the present paper is much more delicate, as  we are considering a polynomial reaction term $f$ combined with a multiplicative noise. Because of this, it seems better and more natural to work in the Banach space $E$ of continuous functions vanishing at the boundary, instead of in $H$. Moreover, the space $C_b(E)$ is larger than the space $C_b(H)$, and working in $C_b(E)$ allows to estimate some interesting functions as, for example, the evaluation functional $\E\d_{\xi_0}(u)=\E u(\xi_0)$, for $\xi_0 \in\,[0,1]$ fixed. 

On the other hand, deciding to work in $C_b(E)$ instead of $C_b(H)$ has some relevant consequences and there is some price to pay. In our case it means in particular that formula \eqref{e1.3s} has to be changed in a suitable way, as for $\varphi \in\,C^1_b(E)$ and $x \in\,E$ we cannot say that $D\varphi(x) \in\,H$ and hence the term $|G^*(\cdot)D\varphi|_H$ is no more meaningful. In fact, it turns out that formula \eqref{e1.3s} has to be replaced by the formula
\begin{equation}
\label{fine1}
\mathcal K(\varphi^2)=2\varphi\;\mathcal K\varphi+\sum_{k=1}^\infty\le|\le<G(\cdot)e_k,D\varphi\r>_E\r|^2,
\end{equation}
where $\{e_k\}_{k \in\,\nat}$ is the complete orthonormal system given by the eigenfunctions of the second derivative, endowed with Dirichlet boundary conditions.

Notice that, in order to give a meaning to \eqref{fine1}, for $\varphi \in\,D({\cal K})$, we have to prove that:
\medskip

(i)  $D(\mathcal K)$ is included in $C^1_b(E)$;\medskip

(ii) the series in \eqref{fine1} is convergent for any $\varphi \in\,D({\cal K})$; \medskip

(iii) $\varphi^2 \in\,D({\cal K})$, for any  $\varphi\in D(\mathcal K)$ and \eqref{fine1} holds.

\medskip

The proof of each one of these steps is very delicate in the framework we are considering here and requires the use of different arguments and techniques, compared to \cite{DDG} and \cite[Chapter 6 and 7]{tesi}.

In order to approach (i), we have proved  that the solution $u^x(t)$ of equation \eqref{eq1} is differentiable with respect to $x \in\,E$. Moreover,   we have  proved that the second derivative equation is solvable and  suitable bounds for the solution have been given. These results were not available in the existing literature and, in order to be proved, required some new arguments, based on positivity, as the classical techniques did not apply, due to the fact that $f^\prime$ is not globally bounded and the noise is multiplicative. Next, we had to prove that, as in the Hilbertian case, a Bismuth-Elworthy-Li formula holds for the derivative of the semigroup. Such a formula, as well known, provides the important gradient estimate
\[\sup_{x \in\,E}|D(P_t\varphi)|_{E^\star}\leq c\,\le(t\wedge 1\r)^{-\frac 12}\,\sup_{x \in\,E}|\varphi(x)|,\ \ \ \ t > 0.\]

In order to prove (ii), we couldn't proceed directly as in \cite[Chapter 5]{tesi}, by using the mild formulation of the first derivative equation and the fact that $e^{tA}$ is Hilbert-Schmidt, for any $t>0$, again because of the combination of the polynomial non-linearity $f$ combined with the multiplicative noise. Nevertheless, by using a duality argument, we could prove that
\[\sum_{k=1}^\infty\le|\le<G(x)e_k,D(P_t\varphi)(x)\r>_E\r|^2\leq c\,|G(x)|_E^2\|\varphi\|_0^2\le(t\wedge 1\r)^{-1},\ \ \ \ t>0,\]
and this allowed us to prove that the series in \eqref{fine1} is convergent, for any $\varphi \in\,D({\cal K})$.
To this purpose, we would like to mention the fact that our duality argument does work because we are dealing together with the two concrete spaces $E=C_0([0,1])$ and $H=L^2(0,1)$, where we can use some nice approximation and duality argument between the corresponding spaces of continuous functions $C_b(E)$ and $C_b(H)$ and the corresponding spaces of differentiable functions $C^1_b(E)$ and $C^1_b(H)$ (see Lemma \ref{Lbr21}).

Finally, in order to prove (iii), we had to use a suitable modification of the It\^o formula that applies to Banach spaces and a suitable approximation argument based on the use of the Ornstein-Uhlenbeck semigroup in the Banach space $E$.

\medskip

As mentioned before, as a consequence of  the modified  { \em identit\'e du carr\'e des champs} \eqref{fine1}, we could construct the space $W^{1,2}(E;\mu)$ and we could prove the Poincar\'e inequality and the existence of a spectral gap. To this purpose, we would like to stress that in spite of the fact that the  { \em identit\'e du carr\'e des champs} has to be modified and we have to replace $|G^\star(\cdot)D\varphi|_H^2$ by the series
\[\sum_{k=1}^\infty\le|\le<G(\cdot)e_k,D\varphi\r>_E\r|^2,\]
the Poincar\'e inequality proved is very similar to  what we have in the case of the Hilbert space $H$, that is
\[ \int_E|\varphi(x)-\overline\varphi|^2d\mu(x)\leq\rho\int_E|D\varphi(x)|_{E^\star}^2d\mu(x).\]

\section{Preliminaries}
\label{sec2}

We shall denote by $H$ the Hilbert space $L^2(0,1)$, endowed with the usual scalar product $\le<\cdot,\cdot\r>_H$ and the corresponding norm $|\cdot|_H$. Moreover, we shall denote by $E$ the Banach space $C_0([0,1])$ of continuous functions on $[0,1]$, vanishing at $0$ and $1$, endowed with the sup-norm $|\cdot|_E$ and the duality $\le<\cdot,\cdot\r>_E$ between $E$ and  its dual topological space $E^*$.

Now, if we fix  $x \in\,E$ there exists $\xi_x \in\,[0,1]$ such that $|x(\xi_x)|=|x|_{E}$. Then, if $\d$ is any element of $E^\star$ having
norm equal $1$,  the element $\d_x \in\,E^\star$ defined by
\begin{equation}
\label{bds92} \le<y,\d_x\r>_{E}:=\le\{
\begin{array}{ll}
\ds{\frac {x(\xi_x)y(\xi_x)}{|x|_{E}}}  &
\ds{\mbox{if}\ x\neq 0}\\
&\vs \ds{\le<y,\d\r>_{E}}  &\ds{\mbox{if}\ x=0,}
\end{array}\r.
\end{equation}
belongs to the subdifferential $\partial\,|x|_{E}:=\le\{\,h^\star \in\,E^\star;\
|h^\star|_{E^\star}=1,\ \le<h,h^\star\r>_E=|h|_E\,\r\}$ (see e.g.
\cite[Appendix A]{tesi} for all definitions and details).
\medskip

Next, let $X$ be a separable Banach space. ${\cal L}(X)$ shall denote the Banach algebra of all linear bounded operators in $X$ and  ${\cal L}^1(X)$ shall denote the subspace of trace-class operators. We recall that
\[\|T\|=\sup_{|x|_X\leq 1}|Tx|_X, \ \ \ \ T \in\,{\cal L}(X).\]

For any other Banach space $Y$, we denote by $B_b(X,Y)$ the linear space of all bounded and measurable mappings $\varphi:X\to Y$ and by $C_b(X,Y)$  the subspace of continuous functions. Endowed with the sup-norm
\[\|\varphi\|_0=\sup_{x \in\,X}|\varphi(x)|_Y,\]
$C_b(X,Y)$ is a Banach space. Moreover, for any $k \geq 1$, $C^k_b(X,Y)$ shall denote the subspace of all functions which are $k$-times Fr\'echet differentiable. $C^k_b(X,Y)$, endowed with the norm 
\[\|\varphi\|_k=\|\varphi\|_0+\sum_{j=1}^k\sup_{x \in\,X}|D^j\varphi(x)|=:\|\varphi\|_0+\sum_{j=1}^k\,[\varphi]_j,\]
is a Banach space. In the case $Y=\reals$, we shall set $B_b(X,Y)=B_b(X)$ and $C^k_b(X,Y)=C^k_b(X)$, $k\geq 0$.

\medskip

In what follows, we shall denote by $A$ the linear operator
\[Ax=\frac{\partial ^2 x}{\partial \xi^2},\ \ \ \ x \in\,D(A)=H^2(0,1)\cap H^1_0(0,1).\]
$A$ is a non-positive and self-adjoint operator which generates an analitic semigroup $e^{tA}$, with dense domain in $L^2(0,1)$. The space $L^1(0,1)\cap L^\infty(0,1)$ is invariant  under $e^{tA}$, so
that $e^{tA}$ may be extended to a non-positive one-parameter
contraction semigroup $e^{t A_p}$ on $L^p(0,1)$, for all $1\leq
p\leq \infty$. These semigroups are strongly continuous, for $1\leq
p<\infty$, and are consistent, in the sense that
$e^{t A_p}x=e^{t A_q}(t)x$, for all $x \in\,L^p(0,1)\cap
L^q(0,1)$. This is why we shall denote all $e^{t A_p}$ by
$e^{tA}$.  Finally, if we consider the part
of $A$ in  $E$, it generates a strongly continuous 
analytic semigroup.

For any $k \in\,\nat$, we define
\begin{equation}
\label{basis}
e_k(\xi)=\sqrt{\frac 2\pi}\sin k\pi \xi,\ \ \ \ \ \xi \in\,[0,1].\end{equation}
The family $\{e_k\}_{k \in\,\nat}$ is a complete orthonormal system in $H$ which diagonalizes $A$, so that
\[A e_k=-k^2 \pi^2 e_k,\ \ \ k \in\,\nat.\]
Notice that for any $t>0$ the semigroup $e^{tA}$ maps $L^p(0,1)$ into $L^q(0,1)$, for any $1\leq p\leq q\leq \infty$ and
and for any $p\geq 1$ there exists $M_p>0$ such that
\begin{equation}
\label{br100}
\|e^{tA}\|_{{\cal L}(L^p(0,1),L^q(0,1))}\leq M_{p,q}\,e^{-\omega_{p,q} t}t^{-\frac{q-p}{2pq}},\ \ \ \ t> 0.
\end{equation}

Here we are assuming that $\partial w(t)/\partial t$ is a space-time white noise defined on the  stochastic basis
$(\Omega,\F,\F_t,\Pro)$. Thus,  $w(t)$ can be written formally as
\[w(t):=\sum_{k=1}^\infty e_k \beta_k(t),\ \ \ \ \ t\geq 0,\]
where $\{e_k\}_{k \in\,\nat}$ is the complete orthonormal system in $H$ which
diagonalizes $A$ and $\{\beta_k(t)\}_{k \in\,\nat}$ is a sequence of mutually
independent standard real Brownian motions  on
$(\Omega,\F,\F_t,\Pro)$. As well known, the series above does not
converge in $H$, but it does converge in any Hilbert space $U$
containing $H$, with Hilbert-Schmidt embedding.

\medskip

Concerning the nonlinearities $f$ and $g$, we assume that they are both continuous. Moreover, they  satisfy the following conditions.

\begin{Hypothesis}
\label{H1}

\begin{enumerate}

\item For any $\xi \in\,[0,1]$, both $f(\xi,\cdot)$ and $g(\xi,\cdot)$ belong to $C^2(\reals)$.
\item There exists $m\geq 1$ such that for $j=0,1,2$
\begin{equation}
\label{h11}
\sup_{\xi \in\,[0,1]}|D_\rho^j f(\xi,\rho)|\leq c_j(1+|\rho|^{(m-j)^+}).
\end{equation}
\item There exists $\la \in\,\reals$ such that
\begin{equation}
\label{h12}
\sup_{(\xi,\rho) \in\,[0,1]\times \reals}f^\prime(\xi,\rho)\leq \la.
\end{equation}
\item The mapping $g(\xi,\cdot):\reals\to \reals$ is Lipschitz continuous, uniformly with respect to $\xi \in\,[0,1]$, and
\begin{equation}
\label{h13}
\sup_{\xi \in\,[0,1]}|g(\xi,\rho)|\leq c\le(1+|\rho|^{\frac 1m}\r).
\end{equation}
\item If there exist $\a>0$ and $\beta\geq 0$ such that
\begin{equation}
\label{h14}
\le(f(\xi,\rho+\si)-f(\xi,\rho)\r)\si\leq -\a\,|\si|^{m+1}+\beta\,(1+|\rho|^{m+1}),\end{equation}
then no restriction is assumed on the linear growth of $g(\xi,\cdot)$.

\end{enumerate}

\end{Hypothesis}

In what follows, for any $x, y \in\,E$ and $\xi \in\,[0,1]$ we shall denote
\[F(x)(\xi)=f(\xi,x(\xi)),\ \ \ \ [G(x)y](\xi)=g(\xi,x(\xi))y(\xi).\]

Due to  \eqref{h11}, $F$ is well defined and 
continuous from $L^{p}(0,1)$ into $L^{q}(0,1)$, for any $p,
q \geq 1$ such that $p/q\geq m$. In particular, if $m\neq 1$ then
$F$ is not  defined from $H$ into itself. Moreover, due
to \eqref{h12}, for $x, h \in\,L^{2m}(0,1)$ 
\begin{equation}
\label{dissiF} \le<F(x+h)-F(x),h\r>_{H}\leq
\la\,|h|_H^2.
\end{equation}
Clearly, $F$ is also well defined in $E$ and it is possible to prove that it is twice continuously differentiable in $E$, with 
\[[D F(x)y](\xi)=D_\rho f(\xi,x(\xi))y(\xi),\ \ \ \ [D^2 F(x)(y_1,y_2)](\xi)=D_\rho^2 f(\xi,x(\xi))y_1(\xi) y_2(\xi).\]
In particular, for any $x \in\,E$
\begin{equation}
\label{sa3} |D^j F(x)|_{{\cal L}^j(E)}\leq c\,(1+|x|^{(m-j)^+}_{E}).
\end{equation}
Moreover,  for any
$x,h \in\,E$
\begin{equation}
\label{dissiFEbis} \le<F(x+h)-F(x),\d_{h}\r>_{E}\leq
\la\,|h|_E,
\end{equation}
where  $\d_{h}$ is the element of $\partial\, |h|_E$ defined above in \eqref{bds92}.

Finally, in the case \eqref{h14} holds, we have
\begin{equation}
\label{dissiE}
\le<F(x+h)-F(x),\d_h\r>_E\leq -\a \,|h|_E^m+\beta\,\le(1+|x|_E^m\r).
\end{equation}

Next, concerning the operator $G$, as the mapping $g(\xi,\cdot):\reals\rightarrow \reals$ is
Lipschitz-continuous,  uniformly with respect to $\xi \in\,[0,1]$, the operator $G$ is Lipschitz-continuous from $H$ into
$\L(H;L^1(0,1))$, that is
\begin{equation}
\label{bds2} \|G(x)-G(y)\|_{\L(H;L^1(0,1))} \leq
c\,|x-y|_{H}.
\end{equation}
In the same way it is possible to show that the operator
$G$ is Lipschitz-continuous from $H$ into
$\L(L^\infty(0,1);H)$ and
\begin{equation}
\label{bds1} \|G(x)-G(y)\|_{\L(L^\infty(0,1);H)}\leq
c\,|x-y|_{H}.
\end{equation}

\bigskip

By proceeding similarly as in  \cite[Proposition 6.1.5]{tesi}, it is possible to prove the following result.
\begin{Lemma}
\label{Lbr21}
For any $\varphi \in\,C_b(E)$ there exists a sequence $\{\varphi_n\}_{n \in\,\nat}\subset C_b(H)$ such that
\begin{equation}
\label{br21}
\le\{
\begin{array}{l}
\ds{\lim_{n\to\infty}\varphi_n(x)=\varphi(x),\ \ \ \ x \in\,E,}\\
\vs
\ds{\sup_{x \in\,H}|\varphi_n(x)|\leq \sup_{x \in\,E}|\varphi(x)|,\ \ \ \ n \in\,\nat.}
\end{array}\r.
\end{equation}
Moreover, if $\varphi \in\,C^k_b(E)$, we have $\{\varphi_n\}_{n \in\,\nat}\subset C^k_b(H)$
and for any $j\leq k$
\begin{equation}
\label{br22}
\le\{
\begin{array}{l}
\ds{\lim_{n\to\infty}D^j\varphi_n(x)(h_i,\ldots,h_j)=D^j\varphi(x)(h_i,\ldots,h_j),\ \ \ \ x,h_1,\ldots,h_j \in\,E,}\\
\vs
\ds{\sup_{x \in\,H}|D^j\varphi_n(x)|_{{\cal L}^j(E)}\leq \sup_{x \in\,E}|D^j\varphi(x)|_{{\cal L}^j(E)},\ \ \ \ n \in\,\nat.}
\end{array}\r.
\end{equation}
\end{Lemma}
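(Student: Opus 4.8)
The goal is to approximate a continuous (resp. $C^k_b$) function $\varphi$ on $E=C_0([0,1])$ by functions on $H=L^2(0,1)$ that restrict appropriately on $E$, with the uniform norm controls \eqref{br21}--\eqref{br22}. The plan is to exploit the smoothing of the heat semigroup $e^{tA}$, which by \eqref{br100} maps $H=L^2(0,1)$ continuously into $E=C_0([0,1])$ for every $t>0$, together with its strong continuity on $E$.

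\medskip\noindent\textbf{Construction.} For $n\in\nat$ set $\varphi_n(x):=\varphi(e^{A/n}x)$ for $x\in H$. Since $e^{A/n}\in\L(H,E)$ and $\varphi\in C_b(E)$, the composition $\varphi_n$ is well-defined, bounded and continuous on $H$, so $\varphi_n\in C_b(H)$; and clearly $\sup_{x\in H}|\varphi_n(x)|=\sup_{x\in H}|\varphi(e^{A/n}x)|\le\sup_{y\in E}|\varphi(y)|$ because $e^{A/n}x$ ranges inside $E$. This gives the second line of \eqref{br21}. For the first line, fix $x\in E$; since $e^{tA}$ is strongly continuous on $E$, $e^{A/n}x\to x$ in $E$ as $n\to\infty$, and continuity of $\varphi$ on $E$ yields $\varphi_n(x)\to\varphi(x)$.

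\medskip\noindent\textbf{Regularity and derivative bounds.} If $\varphi\in C^k_b(E)$, then by the chain rule $\varphi_n\in C^k_b(H)$ with
\[
D^j\varphi_n(x)(h_1,\dots,h_j)=D^j\varphi(e^{A/n}x)\bigl(e^{A/n}h_1,\dots,e^{A/n}h_j\bigr),
\]
for $x,h_1,\dots,h_j\in H$, where each $e^{A/n}h_i\in E$ and $D^j\varphi_n$ is bounded because $D^j\varphi$ is. Taking the supremum over $x\in H$ and over $h_1,\dots,h_j$ in the unit ball of $E$, and using that $\|e^{A/n}\|_{\L(E)}\le 1$ (contraction semigroup on $E$) together with the fact that $\{e^{A/n}h: |h|_E\le1\}\subset\{y\in E:|y|_E\le1\}$, we obtain $\sup_{x\in H}|D^j\varphi_n(x)|_{\L^j(E)}\le\sup_{x\in E}|D^j\varphi(x)|_{\L^j(E)}$, which is the second line of \eqref{br22}. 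The first line of \eqref{br22} follows again from $e^{A/n}x\to x$ and $e^{A/n}h_i\to h_i$ in $E$ together with the (joint) continuity of $D^j\varphi$ on $E$.

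\medskip\noindent\textbf{Main obstacle.} The delicate point is not any of the limits above but making sure every norm comparison is genuinely a comparison of the relevant quantities: the derivatives $D^j\varphi$ are multilinear forms on $E$, so the bound must be taken with all arguments in $E$ (not $H$), and one must check that plugging $e^{A/n}h_i$ — which lies in $E$ with $|e^{A/n}h_i|_E\le|h_i|_E$ once $h_i\in E$, but for general $h_i\in H$ only lies in $E$ with an $n$-dependent norm — still produces a $C^k_b(H)$ function with the stated \emph{uniform in $n$} bound. This is exactly why the estimate \eqref{br22} is phrased with $|D^j\varphi_n(x)|_{\L^j(E)}$ rather than $|D^j\varphi_n(x)|_{\L^j(H)}$: the contraction property of $e^{tA}$ on $E$, not on $H$, is what is used, and keeping track of which space each object lives in throughout the chain-rule computation is the only real care required. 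Everything else is a routine application of strong continuity of $e^{tA}$ on $E$ and the smoothing bound \eqref{br100}.
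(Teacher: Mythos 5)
Your proof is correct, and it follows the same structural skeleton as the paper's: compose $\varphi$ with a linear regularizing operator $S_n:H\to E$ that is a contraction for the sup-norm and converges strongly to the identity on $E$, then read off \eqref{br21}--\eqref{br22} from the chain rule. The only real difference is the choice of $S_n$: the paper takes the Steklov average $x_n(\xi)=\frac n2\int_{\xi-1/n}^{\xi+1/n}\hat x(\eta)\,d\eta$ of the odd extension $\hat x$, which is completely elementary and gives the needed bound $|x_n|_E\leq\sqrt{n/2}\,|x|_H$ by Cauchy--Schwarz, whereas you take $S_n=e^{A/n}$ and invoke the smoothing estimate \eqref{br100}, the $L^\infty$-contractivity of the Dirichlet heat semigroup, and its strong continuity on $E$ --- all of which are recorded in Section \ref{sec2}, so nothing external is needed. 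Your version is arguably tidier, since linearity and the contraction property of $e^{tA}$ on $E$ are immediate, while the averaging operator requires the small check that odd reflection does not increase the sup-norm; the paper's version is more self-contained and avoids the (minor) point that \eqref{br100} only gives $e^{A/n}:L^2\to L^\infty$, so one must additionally observe that the range actually lies in $C_0([0,1])$ (e.g.\ via the uniformly convergent eigenfunction expansion, or analyticity of the semigroup on $E$). Your identification of the one delicate point --- that all the uniform-in-$n$ bounds are taken in $\L^j(E)$, with the $n$-dependent constants confined to the $\L^j(H)$ norms needed only to establish Fr\'echet differentiability on $H$ --- matches exactly the role played by \eqref{br20} in the paper's argument.
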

\begin{proof}
For any $x \in\,H$ we define
\[x_n(\xi)=\frac n2\int_{\xi-\frac 1n}^{\xi+\frac 1n}\hat{x}(\eta)\,d\eta,\]
where $\hat{x}(\xi)$ is the extension by oddness of $x(\xi)$, for $\xi \in (-1,0)$ and $\xi \in\,(1,2)$. Clearly, due to the boundary conditions, we have that $x_n \in\,E$, for any $n \in\,\nat$, and
\begin{equation}
\label{br20}
|x_n|_E\leq \frac n2\sup_{\xi \in\,[0,1]}\int_{\xi-\frac 1n}^{\xi+\frac 1n}|\hat{x}(\eta)|\,d\eta\leq \sqrt{\frac n2}\,|x|_H.
\end{equation}
Now, for any $n \in\,\nat$ we define
\[\varphi_n(x)=\varphi(x),\ \ \ \ x \in\,H.\]
Due to \eqref{br20}, we have that if $\varphi \in\,C_b(E)$, then $\varphi_n \in\,C_b(H)$ and
\[\sup_{x \in\,H}|\varphi_n(x)|=\sup_{x \in\,H}|\varphi(x_n)|\leq \sup_{x \in\,E}|\varphi(x)|.\]
Finally, if $x \in\,E$, then we have that $x_n$ converges to $x$ in $E$, and then we can conclude that \eqref{br21} holds.

Now, we prove \eqref{br22} for $k=1$. If $\varphi \in\,C^1_b(E)$, then for any $n \in\,\nat$ and $x, h \in\,H$ we have
\[\begin{array}{l}
\ds{\varphi_n(x+h)-\varphi_n(x)=\varphi(x_n+h_n)-\varphi(x_n)=\le<h_n,D\varphi(x_n)\r>_E+o(|h_n|_E).}
\end{array}\]
Due to \eqref{br20} we have that $o(|h_n|_E)=o(|h|_H)$ and then we can conclude that $\varphi_n$ is differentiable in $H$ and
\[\le<h,D\varphi_n(x)\r>_H=\le<h_n,D\varphi(x_n)\r>_E,\ \ \ \ x,h \in\,H.\]
As $x_n$ and $h_n$ converge respectively to $x$ and $h$ in $E$, if $x, h \in\,E$, this implies that \eqref{br22} holds.
\end{proof}

\subsection{The approximating Nemytskii operators}
\label{subsec2.1}

Let $\gamma$ be a function in $C^\infty(\reals)$ such that
\begin{equation}
\label{ro}
\gamma(x)=x,\ \ |x|\leq 1,\ \ \ \ \ |\gamma(x)|=2,\ \ |x|\geq 2,\ \ \ \ \ |\gamma(x)|\leq |x|,\ \ x \in\,\reals,\ \ \ \ \ \gamma^\prime(x)\geq 0,\ \ x\in\,\reals.
\end{equation}
For any $n \in\,\nat$, we define
\[f_n(\xi,\rho)=f(\xi,n\gamma(\rho/n)),\ \ \ \ (\xi,\rho) \in\,[0,1]\times \reals.\]
It is immediate to check that all functions $f_n$ are in $C^2(\reals)$ and satisfy  \eqref{h11} and \eqref{h12}, so that  the corresponding composition operators $F_n$ satisfiy inequalities \eqref{sa3} and \eqref{dissiFEbis}, for constants $c$ and $\la$ independent of $n$. Namely
 \begin{equation}
\label{sa3n} |D^j F_n(x)|_{E}\leq c\,(1+|x|^{(m-j)^+}_{E}),
\end{equation}
and
\begin{equation}
\label{dissiFEbisn} \le<F_n(x+h)-F_n(x),\d_{h}\r>_{E}\leq
\la\,|h|_E.
\end{equation}
Notice that all $f_n(\xi,\cdot)$ are Lipschitz continuous, uniformly with respect to $\xi \in\,[0,1]$, so that all $F_n$ are Lipschitz continuous in all $L^p(0,1)$ spaces and in $E$.

According to \eqref{ro}, we can easily prove that for any $j=0,1,2$ and $R>0$
\[\lim_{n\to \infty}\sup_{(\xi,\rho) \in\,[0,1]\times[-R,R]}|D^jf_n(\xi,\rho)-D^j f(\xi,\rho)|=0,\]
and then for any $R>0$ and $j=1,2$  

\begin{equation}
\label{6.1}
\le\{\begin{array}{l}
\ds{\lim_{n\to \infty}\sup_{|x|_E\leq R}|F_n(x)-F(x)|_E=0,}\\
\vs
\ds{\lim_{n\to \infty}\sup_{\substack{|x|_E\leq R\\|y_1|_E,\ldots,|y_j|_E\leq R}}|D^j F_n(x)(y_1,\ldots,y_j)-D^j F(x)(y_1,\ldots,y_j)|_E=0.}
\end{array}\r.
\end{equation}

\medskip

We have already seen that the mappings $F_n$ are  Lipschitz-continuous in $H$. The differentiability properties of $F_n$ in $H$ are a more delicate issue. Actually, even if $f_n(\xi,\cdot)$ is assumed to be smooth, $F_n:H\to H$ is only Gateaux differentiable and its Gateaux derivative at $x \in\,H$ along the direction $h \in\,H$ is given by
\[[D F_n(x)h](\xi)=D_\rho f_n(\xi,x(\xi))h(\xi),\ \ \ \ \xi \in\,[0,1].\]
Higher order differentiability is even more delicate, as the higher order derivatives do not exist along any direction in $H$, but only along more regular directions. For example, the second order derivative of $F_n$ exists only along directions in $L^4(0,1)$, and
for any $x \in\,H$ and $h, k \in\,L^4(0,1)$
\[[D^2 F_n(x)(h,k)](\xi)=D^2_\rho f_n(\xi,x(\xi))h(\xi)k(\xi),\ \ \ \xi \in\,[0,1].\]

\section{The solution of \eqref{eq1}} 
\label{sec3}

With the notations introduced in Section \ref{sec2}, equation \eqref{eq1} can be rewritten as the following abstract evolution equation
\begin{equation}
\label{eqabstract}
du(t)=\le[A u(t)+F(u(t))\r]\,dt+G(u(t))dw(t),\ \ \ \ \ u(0)=x.
\end{equation}

\begin{Definition}
An adapted process $u \in\,L^p(\Omega;C([0,T];E))$ is a mild solution for equation \eqref{eqabstract} if
\[u(t)=e^{tA}x+\int_0^t e^{(t-s)A} F(u(s))\,ds+\int_0^t e^{(t-s)A}G(u(s))\,dw(s).\]
\end{Definition}
Let $X=E$ or $X=H$. In what follows, for any $T>0$ and $p\geq 1$ we shall denote by $C^w_{p,T}(X)$ the set of adapted processes in $L^p(\Omega;C([0,T];X))$.  Endowed with the norm
\[\|u\|_{C^w_{p,T}(X)}=\le(\E\sup_{t \in\,[0,T]}|u(t)|_X^p\r)^{\frac 1p},\]$C^w_{p,T}(X)$ is a Banach space.  Furthermore, we shall denote by $L^w_{p,T}(X)$ the Banach space of adapted processes in $C([0,T];L^p(\Omega;X))$, endowed with the norm
\[\|u\|_{L^w_{p,T}(X)}=\sup_{t \in\,[0,T]}\le(\E\,|u(t)|_X^p\r)^{\frac 1p}.\]

In \cite{cerrai} it has been proved that, under Hypothesis \ref{H1},  for any $T>0$ and $p\geq 1$ and for any $x \in\,E$,   equation  \eqref{eqabstract} admits a unique mild solution $u^x$ in $C^w_{p,T}(E)$. Moreover
\begin{equation}
\label{bie} 
\|u^x\|_{C^w_{p,T}(E)}\leq c_{p,T}\le(1+|x|_E\r).
\end{equation}

One of the key steps in the proof of such an existence and uniqueness result, is given in  \cite[Theorem 4.2]{cerrai}, where it proved that the mapping
\[u \in\,C^x_{p,T}(E)\mapsto \le(t\mapsto \Gamma(u)(t):=\int_0^te^{(t-s)A}G(u(s))\,dw(s)\r) \in\,C^x_{p,T}(E)\] is well defined and Lipschitz continuous. By adapting the arguments used in the proof of \cite[Theorem 4.2]{cerrai}, it is also possible to show that
\begin{equation}
\label{bie4}
\E\sup_{s \in\,[0,t]}|\Gamma(u)(s)-\Gamma(v)(s)|_{E}^p \leq c_{p}(t)\int_0^t\E|u(s)-v(s)|_E^p\,ds,\ \ \ \ t\geq 0,
\end{equation}
so that, in particular, there exists $T_p>0$ such that
\begin{equation}
\label{bie9}
\|\Gamma(u)-\Gamma(v)\|_{L^w_{p,T}(E)}\leq \frac 14\,\|u-v\|_{L^w_{p,T}(E)},\ \ \ \ T\leq T_p.
\end{equation}

Now, for any $n \in\,\nat$, we consider the approximating problem
\begin{equation}
\label{eqn}
du(t)=\le[A u(t)+F_n(u(t))\r]\,dt+G(u(t))\,dw(t),\ \ \ \ u(0)=x,
\end{equation}
and we denote by $u^x_n$ its unique mild solution in $C^w_{p,T}(E)$.
As all $F_n$ satisfy \eqref{sa3n} and \eqref{dissiFEbisn},  we have that
\begin{equation}
\label{bien} 
\|u_n^x\|_{C^w_{p,T}(E)}\leq c_{p}(T)\le(1+|x|_E\r),\ \ \ \ \ n \in\,\nat,
\end{equation}
for a function $c_{p}(T)$ independent of $n$.

\medskip

As proved in \cite[Section 3]{cerrai}, the mapping
\[u \in\,C^x_{p,T}(H)\mapsto \le(t\mapsto \Gamma(u)(t):=\int_0^te^{(t-s)A}G(u(s))\,dw(s)\r) \in\,C^x_{p,T}(H)\] is well defined and Lipschitz continuous. Then, as the mapping $F_n:H\to H$ is Lipschitz-continuous, we have that for any $x \in\,H$ and for any $T>0$ and $p\geq 1$ problem \eqref{eqn} admits a unique mild solution $u^x_n \in\,C^x_{p,T}(H)$ such that
\begin{equation}
\label{bienH} 
\|u_n^x\|_{C^w_{p,T}(H)}\leq c_{n,p}(T)\le(1+|x|_H\r).
\end{equation}

\begin{Lemma}
\label{pr1}
Under Hypothesis \ref{H1}, for any $T, R>0$ and $p\geq 1$, we have
\begin{equation}
\label{bie2}
\lim_{n\to \infty}\sup_{|x|_E\leq R}\|u^x_n-u^x\|_{C^w_{p,T}(E)}=0.
\end{equation}
\end{Lemma}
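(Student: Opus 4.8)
The plan is to subtract the mild formulations of \eqref{eqn} and \eqref{eqabstract}, take sup over $[0,T]$ and $p$-th moments, and close a Gronwall-type estimate. Writing $v_n^x = u_n^x - u^x$, we have
\[
v_n^x(t) = \int_0^t e^{(t-s)A}\bigl(F_n(u_n^x(s)) - F(u^x(s))\bigr)\,ds + \Gamma(u_n^x)(t) - \Gamma(u^x)(t).
\]
Split the drift difference as $F_n(u_n^x(s)) - F_n(u^x(s))$ plus $F_n(u^x(s)) - F(u^x(s))$. The first piece is where the polynomial growth bites: I cannot use a global Lipschitz bound for $F_n$ uniform in $n$. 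Instead I would handle the nonlinear drift term not via the mild formula in $E$ directly, but by using the dissipativity estimate \eqref{dissiFEbisn}: since the $F_n$ satisfy the one-sided bound with $\lambda$ independent of $n$, a standard argument (differentiating $|v_n^x(t)|_E$ along $\partial|v_n^x(t)|_E$, or rather its regularized/mild analogue) gives
\[
|v_n^x(t)|_E \le \int_0^t e^{\lambda(t-s)}\,|F_n(u^x(s)) - F(u^x(s))|_E\,ds + \bigl|\Gamma(u_n^x)(t) - \Gamma(u^x)(t)\bigr|_E + (\text{terms controlled by } v_n^x).
\]
More carefully, I expect to run this on the subinterval $[0,T_p]$ first, using \eqref{bie9} to absorb the $\Gamma$-difference: from \eqref{bie4} one gets $\E\sup_{s\le t}|\Gamma(u_n^x)(s)-\Gamma(u^x)(s)|_E^p \le c_p(t)\int_0^t \E|v_n^x(s)|_E^p\,ds$, so the stochastic convolution contributes only a term that Gronwall can swallow.

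The genuinely new input needed is control of the deterministic remainder $I_n(t) := \int_0^t e^{(t-s)A}\bigl(F_n(u^x(s)) - F(u^x(s))\bigr)\,ds$. Here I would invoke \eqref{6.1}: on the event $\{\sup_{s\le T}|u^x(s)|_E \le R'\}$ one has $\sup_{s\le T}|F_n(u^x(s)) - F(u^x(s))|_E \to 0$ as $n\to\infty$. The complementary event has small probability uniformly for $|x|_E \le R$ by the a priori bound \eqref{bie} (Chebyshev with a high enough moment $q > p$), and on it one bounds $|F_n(u^x(s)) - F(u^x(s))|_E$ crudely by $c(1+|u^x(s)|_E^m)$ using \eqref{sa3} and \eqref{sa3n}, whose $(p\,\text{or more})$-th moment is again controlled uniformly in $|x|_E\le R$ by \eqref{bie}. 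Splitting the expectation over these two events gives $\sup_{|x|_E\le R}\E\sup_{t\le T_p}|I_n(t)|_E^p \to 0$. Thus on $[0,T_p]$,
\[
\sup_{|x|_E\le R}\E\sup_{t\le T_p}|v_n^x(t)|_E^p \le \eta_n + c_p\int_0^{T_p}\sup_{|x|_E\le R}\E\sup_{s\le t}|v_n^x(s)|_E^p\,dt,
\]
with $\eta_n\to 0$, and Gronwall finishes the estimate on $[0,T_p]$.

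To pass from $[0,T_p]$ to an arbitrary $[0,T]$ I would iterate over consecutive intervals of length $T_p$, at each step using the Markov property together with the fact that the $E$-norm of the solution at the left endpoint is controlled uniformly in $|x|_E \le R$ by \eqref{bien} and \eqref{bie}; since both bounds are linear in the initial datum and uniform in $n$, the "effective $R$" on the $j$-th subinterval stays bounded by a constant depending only on $R$, $T$, $p$, and the number of steps $\lceil T/T_p\rceil$ is finite, so finitely many applications give \eqref{bie2}. The main obstacle, as indicated, is the nonlinear drift: one must avoid any appeal to a uniform-in-$n$ Lipschitz constant for $F_n$ in $E$ and instead exploit (a) the $n$-uniform one-sided dissipativity \eqref{dissiFEbisn} to handle $F_n(u_n^x)-F_n(u^x)$, and (b) the $n$-uniform polynomial bound \eqref{sa3n} plus the a priori moment estimate \eqref{bien} to handle the tail contribution of $F_n(u^x)-F(u^x)$ on the large-norm event. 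Everything else is a standard Gronwall-plus-localization argument.
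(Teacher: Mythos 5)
Your proposal follows essentially the same route as the paper: isolate the stochastic convolution difference and bound it via \eqref{bie4}, handle $F_n(u_n^x)-F_n(u^x)$ through the $n$-uniform one-sided dissipativity \eqref{dissiFEbisn} applied to $z_n=u_n^x-u^x-\Gamma_n$, control $F_n(u^x)-F(u^x)$ by splitting over a large-norm event using \eqref{6.1}, \eqref{sa3n}, \eqref{bie}, \eqref{bien}, and close with Gronwall. The only difference is your restriction to $[0,T_p]$ with subsequent iteration, which is unnecessary: since \eqref{bie4} already yields a Gronwall-absorbable integral term on any $[0,T]$, the estimate closes on the whole interval in one step, as in the paper.
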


\begin{proof}
For any $n \in\,\nat$ we consider the problem
\begin{equation}
\label{bie3}
d\Gamma_n(t)=A \Gamma_n(t)\,dt+\le[G(u^x_n(t))-G(u^x(t))\r]\,dw(t),\ \ \ \ \Gamma_n(0)=0.
\end{equation}
Problem \eqref{bie3} admits a unique mild solution $\Gamma_n \in\,C^w_{p,T}(E)$ which is given by
\[\Gamma_n(t)=\Gamma(u^x_n)(t)-\Gamma(u^x)(t).\]
Hence, due to \eqref{bie4} we have
\begin{equation}
\label{bie4n}
\E\sup_{t \in\,[0,T]}|\Gamma_n(t)|_{E}^p \leq c_{p}(T)\int_0^T\E|u^x_n(s)-u^x(s)|_E^p\,ds.
\end{equation}
Now, if we define $z_n=u^x_n-u^x-\Gamma_n$, we have that $z_n$ solves the equation
\[\frac{dz_n}{dt}(t)=A z_n(t)+F_n(u^x_n(t))-F(u^x(t)),\ \ \ \ z_n(0)=0,\]
so that, thanks to \eqref{dissiFEbisn},
\[\begin{array}{l}
\ds{\frac {d^-}{dt}|z_n(t)|_E\leq \le<A z_n(t),\d_{z_n(t)}\r>_E+\le<F_n(u^x_n(t))-F_n(u^x(t)),\d_{z_n(t)}\r>_E}\\
\vs
\ds{+\le<F_n(u^x(t))-F(u^x(t)),\d_{z_n(t)}\r>_E\leq \la\,|z_n(t)|_E+|F_n(u^x(t))-F(u^x(t))|_E.}
\end{array}\]
Due to \eqref{bie4n}, by comparison this yields  for any $t \in\,[0,T]$
\[\|u^x_n-u^x\|^p_{C^w_{p,t}(E)}\leq c_{p,T}\,\|F_n(u^x)-F(u^x)\|_{C^w_{p,T}(E)}^p+c_p(T)\int_0^t \|u^x_n-u^x\|^p_{C^w_{p,s}(E)}\,ds,\]
and then, by using  the Gronwall Lemma, we get
\[\|u^x_n-u^x\|^p_{C^w_{p,T}(E)}\leq c_{p,T}\,\|F_n(u^x)-F(u^x)\|_{C^w_{p,T}(E)}^p.\]
This means that \eqref{bie2}  follows once we prove that for any $R>0$
\begin{equation}
\label{bie6}
\lim_{n\to \infty}\sup_{|x|_E\leq R}\|F_n(u^x)-F(u^x)\|_{C^w_{p,T}(E)}^p=0.
\end{equation}

For any $K>0$, we have
\[\begin{array}{l}
\ds{\|F_n(u^x)-F(u^x)\|_{C^w_{p,T}(E)}^p}\\
\vs
\ds{=\E\le(\sup_{t \in\,[0,T]}|F_n(u^x(t))-F(u^x(t))|_E^p;\sup_{t \in\,[0,T]}|u^x_n(t)|_E\vee \sup_{t \in\,[0,T]}|u^x(t)|_E\leq K\r)}\\
\vs
\ds{+\E\le(\sup_{t \in\,[0,T]}|F_n(u^x(t))-F(u^x(t))|_E^p;\sup_{t \in\,[0,T]}|u^x_n(t)|_E\vee \sup_{t \in\,[0,T]}|u^x(t)|_E>K\r),}
\end{array}\]
hence, in view of \eqref{sa3n}, \eqref{bie} and \eqref{bien}, we obtain
\[\begin{array}{l}
\ds{\sup_{|x|_E\leq R}\|F_n(u^x)-F(u^x)\|_{C^w_{p,T}(E)}^p\leq \frac{c_{R,p}(T)}{K}+\sup_{|y|_E\leq K}|F_n(y)-F(y)|_E^p.}
\end{array}\]
Thanks to \eqref{6.1}, as $K$ is arbitrary, this implies \eqref{bie6} and \eqref{bie2} follows.

\end{proof}

\section{The first derivative}
\label{subsec3.1}

For any $x \in\,E$,  $u \in\,L^w_{p,T}(E)$ and $n \in\,\nat$, we define 
\[\Lambda_n(x,u)(t)=e^{tA}x+\int_0^te^{(t-s)A}F_n(u(s))\,ds+\int_0^te^{(t-s)A}G(u(s))\,dw(s),\ \ \ t\geq 0.\]
Clearly, for any $x \in\,E$ the solution $u^x_n$  of problem \eqref{eqn} is the unique fixed point of $\Lambda_n(x,\cdot)$.
The mapping $F_n:E\to E$ is Lipschitz continuous, then due to \eqref{bie9},
there exists $T_p=T_p(n)>0$ such that 
\[\|\Lambda_n(x,u)-\Lambda_n(x,v)\|_{L^w_{p,T}(E)}\leq \frac 12 \|u-v\|_{L^w_{p,T}(E)},\ \ \ T\leq T_p.\]
Therefore, if we show that the contraction mapping $\Lambda_n$ is of class $C^1$, we get that the mapping
\[x \in\,E\mapsto u^x_n \in\,L^w_{p,T}(E),\]
 is differentiable and for any $h \in\,E$
\begin{equation}
\label{bie8}
D_x u^x_n h=D_x \Lambda_n(x,u^x_n) h+D_u \Lambda_n(x,u^x_n) D_x u^x_n h\end{equation}
(for a proof see for example \cite{tesi}).

As $f_n(\xi,\cdot)$ is in $C^2(\reals)$, the mapping $F_n:E\to E$ is twice continuously differentiable, then it is possible to check that the mapping
\[u \in L^w_{p,T}(E)\mapsto \le(t\mapsto \int_0^t e^{(t-s)A}F_n(u(s))\,ds\r) \in\,L^w_{p,T}(E)\]
is twice differentiable.
Analogously, as the mapping $g(\xi,\cdot)$ is in $C^2(\reals)$, by using the stochastic factorization method as in \cite[Theorem 4.2]{cerrai}, it is not difficult to prove that the mapping 
\[u \in L^w_{p,T}(E)\mapsto \le(t\mapsto \int_0^t e^{(t-s)A}G(u(s))\,dw(s)\r) \in\,L^w_{p,T}(E)\]
is twice differentiable. 

Moreover, for any $x \in\,E$ and $u, v \in\,L^w_{p,T}(E)$, we have  
\begin{equation}
\label{bie10}
[D_u \Lambda_n(x,u) v](t)=\int_0^t e^{(t-s)A}F_n^\prime(u(s)) v(s)\,ds+\int_0^t e^{(t-s)A}G^\prime(u(s))v(s)\,dw(s)\ \ \ t\geq 0,
\end{equation}
where, for any $x,y,z \in\,E$ and $\xi \in\,[0,1]$
\[\begin{array}{l}
\ds{[F^\prime_n(x)y](\xi)=D_\rho f_n(\xi,x(\xi))y(\xi),}\\
\vs
\ds{\le[\le(G^\prime(x)y\r)z\r](\xi)=D_\rho g(\xi,x(\xi))y(\xi)z(\xi),}
\end{array}\]
and $D_\rho f_n$ and $D_\rho g$ are the derivatives of $f_n$ and $g$ with respect to the second variable.
Therefore, as, clearly,
\[[D_x \Lambda_n(x,u) h](t)=e^{tA}h,\]
from \eqref{bie8} we have that $\eta^h_n:=D_x u^x_n h$ solves the  linear equation
\begin{equation}
\label{bie12}
d\eta^h_n(t)=\le[A \eta^h_n(t)+F^\prime_n(u^x_n(t))\eta^h_n(t)\r]\,dt+G^\prime(u^x_n(t))\eta^h_n(t)\,dw(t),\ \ \ \eta^h_n(0)=h.
\end{equation}

\begin{Lemma}
\label{bie16}
Under Hypothesis \ref{H1},  for any $T>0$ and $p\geq 1$ the process $u^x_n$ is differentiable with respect to $x \in\,E$ in $L^w_{p,T}(E)$. Moreover, the derivative $D_xu^x_n h=:\eta^h_n$ belongs to  $C^w_{p,T}(E)$ and satisfies
\begin{equation}
\label{bie11}
\|\eta^h_n\|_{C^w_{p,T}(E)}\leq M_{p}\,e^{\omega_p T}\,|h|_E,
\end{equation}
for some  constants $M_p$ and $\omega_p$ independent of $n \in\,\nat$.
\end{Lemma}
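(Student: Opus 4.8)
The plan is to extract the differentiability statement from the general fixed point machinery already set up, and then to prove the estimate \eqref{bie11} with constants independent of $n$ by a positivity argument that bypasses the polynomial growth of $F_n'$. For the first part, recall that for $T\leq T_p(n)$ the map $\Lambda_n(x,\cdot)$ is a contraction of class $C^1$ (in fact $C^2$) on $L^w_{p,T}(E)$; hence, by the implicit function theorem, $x\in E\mapsto u^x_n\in L^w_{p,T_p(n)}(E)$ is differentiable, $\eta^h_n:=D_xu^x_nh$ solves the linear equation \eqref{bie12}, and a standard concatenation over the intervals $[kT_p(n),(k+1)T_p(n)]$ together with the Markov property extends differentiability to an arbitrary $[0,T]$, with $\eta^h_n\in C^w_{p,T}(E)$ of finite — but, at this stage, possibly $n$-dependent — norm. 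The whole content of the lemma lies in upgrading this to the bound \eqref{bie11} with $M_p,\omega_p$ \emph{uniform in $n$}, and this is what the remaining steps address.

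The naive route (writing the mild form of \eqref{bie12}, peeling off the stochastic convolution and treating the remainder pathwise through the subdifferential $\d_{(\cdot)}$) does give the correct pathwise dissipativity for $|\eta^h_n(t)|_E$, with the good constant $\lambda$ coming from \eqref{h12} and the dissipativity of $A$ in $E$; but then $F_n'(u^x_n(t))$ acting on the stochastic convolution produces a factor $1+|u^x_n(t)|_E^{m-1}$ that would make the estimate depend on $|x|_E$ after taking moments. To avoid this I would first reduce to a nonnegative initial direction. Since $f_n(\xi,\cdot)$ and $g(\xi,\cdot)$ are Lipschitz and $e^{tA}$ is positivity preserving on $E$, the flow $x\mapsto u^x_n$ of \eqref{eqn} obeys the comparison principle (which, if needed, is obtained by the customary truncation of the cylindrical noise); differentiating at $x$ along a nonnegative direction $g\geq 0$ gives $\eta^g_n\geq 0$ a.s. By linearity and uniqueness for \eqref{bie12}, $\eta^h_n=\eta^{h^+}_n-\eta^{h^-}_n$ with $\eta^{h^\pm}_n\geq 0$ and $\eta^{h^+}_n+\eta^{h^-}_n=\eta^{|h|}_n$, so $|\eta^h_n(t,\xi)|\leq \eta^{|h|}_n(t,\xi)$ and $|\eta^h_n(t)|_E\leq |\eta^{|h|}_n(t)|_E$; since $|\,|h|\,|_E=|h|_E$, it suffices to prove \eqref{bie11} for $g\geq 0$.

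So let $g\geq 0$ and $\eta:=\eta^g_n\geq 0$, with mild representation
\[\eta(t)=e^{tA}g+\int_0^te^{(t-s)A}F_n'(u^x_n(s))\eta(s)\,ds+\int_0^te^{(t-s)A}G'(u^x_n(s))\eta(s)\,dw(s).\]
Because $e^{tA}$ is positivity preserving and, by \eqref{h12} together with $\eta(s)\geq 0$, $[F_n'(u^x_n(s))\eta(s)](\xi)=D_\rho f_n(\xi,u^x_n(s,\xi))\,\eta(s,\xi)\leq \lambda^+\,\eta(s,\xi)$, the drift convolution is bounded above, pointwise in $\xi$, by $\lambda^+\int_0^t e^{(t-s)A}\eta(s)\,ds$; using in addition that $e^{tA}$ is a contraction on $E$ and that $\eta(t)\geq 0$, one obtains
\[|\eta(t)|_E\leq |g|_E+\lambda^+\int_0^t|\eta(s)|_E\,ds+|M_t|_E,\qquad M_t:=\int_0^te^{(t-s)A}G'(u^x_n(s))\eta(s)\,dw(s).\]
The decisive gain is that, $g(\xi,\cdot)$ being Lipschitz, $\|G'(u^x_n(s))\eta(s)\|_{\L(H;L^1(0,1))}\vee\|G'(u^x_n(s))\eta(s)\|_{\L(L^\infty(0,1);H)}\leq c\,|\eta(s)|_E$ with $c$ independent of $n$ and of $x$, so that, arguing exactly as for \eqref{bie4} (i.e. as in \cite[Theorem 4.2]{cerrai}), $\E|M_t|_E^p\leq c_p(t)\int_0^t\E|\eta(s)|_E^p\,ds$ with \emph{no} polynomial factor. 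Raising the previous inequality to the $p$-th power, taking expectations, and invoking the Gronwall lemma — conveniently first in $L^w_{p,T}(E)$, where the drift term is harmless, and then once more via the mild formula to upgrade to the $C^w_{p,T}(E)$ norm — yields \eqref{bie11} with $M_p,\omega_p$ depending only on $p$, $T$, $\lambda$ and the Lipschitz constant of $g$; the finiteness of $\E\sup_{t\in[0,T]}|\eta(t)|_E^p$ needed to start the Gronwall argument is the one already secured in the first step. The main obstacle is precisely the uniformity in $n$, and it is resolved by the interplay of the one-sided bound \eqref{h12} (uniform in $n$), the reduction to $\eta\geq 0$ afforded by the comparison principle, and the uniform boundedness of the noise coefficient inherited from the Lipschitz character of $g$.
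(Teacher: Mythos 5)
Your proof is correct and follows essentially the same strategy as the paper's: reduction to a nonnegative direction $h\geq 0$ via positivity of the linearized flow, exploitation of the one-sided bound $D_\rho f_n\leq\lambda$ acting on the nonnegative $\eta^h_n$ to kill the polynomial growth of $F_n'$, a bound on the stochastic convolution that depends only on the Lipschitz constant of $g$, and Gronwall. The only difference is cosmetic: you bound the deterministic convolution pointwise through the positivity preservation of $e^{tA}$ in the mild formulation, whereas the paper first subtracts the stochastic convolution and then runs a pathwise subdifferential inequality $\frac{d^-}{dt}|z_n(t)|_E\leq(\lambda+1)^+|\eta^h_n(t)|_E$ on the remainder.
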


\begin{proof} To prove \eqref{bie11} we cannot use the It\^o formula, due to presence of the white noise. Moreover we cannot use the same arguments used for example in  \cite{cerrai} and \cite{tesi}, because of the unboundedness of $f^\prime$ and the presence of the noisy part.
In view of what we have already seen, we have only to prove that \eqref{bie11} holds. To this purpose, the key remark here is that we can assume $h\geq 0$. Actually, in the general case we can decompose $h=h^+-h^-$. As $h^+$ and $h^-$ are non-negative, both $\eta^{h^+}_n$ and $\eta^{h^-}_n$   verify the Lemma and then, since by linearity $\eta^h_n=\eta^{h^+}_n-\eta^{h^-}_n$, we can conclude that the Lemma is true also for $\eta_n^h$.

Let  $\Gamma_n(t)$ be the mild solution of the problem
\[d\Gamma_n(t)=[A-I]\,\Gamma_n(t)\,dt+G^\prime(u^x_n(t))\eta^h_n(t)\,dw(t),\ \ \ \ \Gamma_n(0)=0.\]
Since we are assuming that $D_\rho g(\xi,\cdot)$ is bounded, uniformly with respect to $\xi \in\,[0,1]$, we have that the argument of \cite[Theorem 4.2 and Proposition 4.5]{cerrai} can be adapted to the present situation and 
\begin{equation}
\label{bie14}
\E\sup_{s \in\,[0,t]}|\Gamma_n(s)|^p_E\leq c_{p}\int_0^t \E|\eta_n^h(s)|^p_E\,ds,\ \ \ \ t \in\,[0,T],\end{equation}
for some constant $c_p$ independent of $T>0$.

Next, if we set $z_n=\eta^h_n-\Gamma_n$, we have  that $z_n$ solves the equation
\[\frac{dz_n}{dt}(t)=[A-I]\, z_n(t)+\le[F^\prime_n(u^x_n(t))+I\r]\,\eta_n^h(t),\ \ \ \ z_n(0)=h.\]
Now, since we are assuming that $h\geq 0$ and equation \eqref{bie12} is linear, we have that 
\[\Pro\le(\eta^h_n(t)\geq 0,\ t \in\,[0,T]\r)=1,\]
(see \cite{dmp} for a proof and see also \cite{stabi} for an analogous result for equations with non-Lipschitz coefficients).
Therefore, as $f^\prime_n(\xi,\rho)\leq \la$, for any $(\xi,\rho) \in\,[0,1]\times \reals$, recalling how $\d_x$ is defined in \eqref{bds92} we obtain
\[\begin{array}{l}
\ds{\frac{d^-}{dt}|z_n(t)|_E\leq \le<\le[A-I\r]\,z_n(t),\d_{z_n(t)}\r>_E+\le<\le[F^\prime_n(u^x_n(t))+I\r]\,\eta_n^h(t),\d_{z_n(t)}\r>_E}\\
\vs
\ds{\leq \le(f^\prime_n(\xi_{z_n(t)}, u^x_n(t,\xi_{z_n(t)})+1\r) \eta_n^h(t,\xi_{z_n(t)})\leq (\la+1)^+\,| \eta_n^h(t)|_E.}
\end{array}\]
Thanks to \eqref{bie14}, this implies
\[\begin{array}{l}
\ds{\E\sup_{s \in\,[0,t]}|\eta^h_n(s)|_E^p\leq c_p\,\E\sup_{s \in\,[0,t]}|z_n(s)|^p_E+c_p\,\E\sup_{s \in\,[0,t]}|\Gamma_n(s)|_E^p}\\
\vs
\ds{\leq c_p\,|h|_E^p+c_p\int_0^t\E\sup_{r \in\,[0,s]}|\eta^h_n(r)|_E^p\,ds,}
\end{array}\]
so that from the Gronwall Lemma \eqref{bie11} follows.
\end{proof}

\begin{Lemma}
\label{bie28}
Under Hypothesis \ref{H1},  there exists $\eta^h \in\,C^w_{p.T}(E)$ such that for any $R>0$
\begin{equation}
\label{bie29}
\lim_{n\to \infty}\sup_{x, h \in\,B_R(E)}\|\eta^h_n-\eta^h\|_{C^w_{p.T}(E)}=0.
\end{equation}
Moreover, the limit $\eta^h$ solves the equation
\begin{equation}
\label{bie15}
d\eta^h((t)=\le[A\eta^h(t)+F^\prime(u^x(t))\eta^h(t)\r]\,dt+G^\prime(u^x(t))\eta^h(t)\,dw(t),\ \ \ \eta^h(0)=h,
\end{equation} 
and
\begin{equation}
\label{bie11bis}
\|\eta^h\|_{C^w_{p,T}(E)}\leq M_p\,e^{\omega_p T}\,|h|_E.
\end{equation}

\end{Lemma}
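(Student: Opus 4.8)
I would prove Lemma \ref{bie28} by showing that $\{\eta^h_n\}_{n\in\nat}$ is a Cauchy sequence in $C^w_{p,T}(E)$, uniformly for $x,h$ in bounded balls, by identifying its limit $\eta^h$ as a mild solution of \eqref{bie15} (passing to the limit in the mild form of \eqref{bie12}), and by obtaining \eqref{bie11bis} from \eqref{bie11} and the convergence. Fix $R>0$ and $n,k\in\nat$, and set $\rho_{n,k}:=\eta^h_n-\eta^h_k$. Writing $F^\prime_n(u^x_n(t))\eta^h_n(t)-F^\prime_k(u^x_k(t))\eta^h_k(t)=F^\prime_n(u^x_n(t))\rho_{n,k}(t)+r^F_{n,k}(t)$ with the remainder $r^F_{n,k}(t):=[F^\prime_n(u^x_n(t))-F^\prime_k(u^x_k(t))]\eta^h_k(t)$, and similarly for the noise coefficient with remainder $r^G_{n,k}(t):=[G^\prime(u^x_n(t))-G^\prime(u^x_k(t))]\eta^h_k(t)$, the process $\rho_{n,k}$ has zero initial datum and solves
\[d\rho_{n,k}(t)=\le[A\rho_{n,k}(t)+F^\prime_n(u^x_n(t))\rho_{n,k}(t)+r^F_{n,k}(t)\r]dt+\le[G^\prime(u^x_n(t))\rho_{n,k}(t)+r^G_{n,k}(t)\r]dw(t).\]
Arguing exactly as in the proof of \eqref{bie6} — splitting according to whether $\sup_{[0,T]}\le(|u^x_n(t)|_E\vee|u^x_k(t)|_E\r)$ exceeds a threshold $K$ or not — and using \eqref{6.1}, Lemma \ref{pr1}, the local uniform continuity of $D_\rho f$ and $D_\rho g$, and the moment bounds \eqref{bien} and \eqref{bie11}, which hold for every $p\geq 1$ with constants independent of $n$, one proves that
\[\lim_{n,k\to\infty}\ \sup_{x,h\in B_R(E)}\le[\E\le(\int_0^T|r^F_{n,k}(s)|_E\,ds\r)^p+\E\int_0^T|r^G_{n,k}(s)|_{{\cal L}(E)}^p\,ds\r]=0.\]

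The core of the proof is the estimate of $\rho_{n,k}$. The obstruction is that $F^\prime_n(u^x_n(t))$ is only bounded by $c(1+|u^x_n(t)|_E^{m-1})$; moreover, unlike in Lemma \ref{bie16} — where the positivity of $\eta^h_n$ turns $[F^\prime_n(u^x_n(t))+I]\eta^h_n(t)$ into a quantity bounded above by $(\la+1)^+\eta^h_n(t)$ — here $\rho_{n,k}$ is a difference of solutions and need not be sign-definite, so no deterministic Gronwall bound is available on the whole of $[0,T]$. We therefore localize: for $N\in\nat$ let $\tau^{n,k}_N:=\inf\{t\geq 0:|u^x_n(t)|_E\vee|u^x_k(t)|_E\geq N\}\wedge T$, so that $|F^\prime_n(u^x_n(t))|_{{\cal L}(E)}\leq c(1+N^{m-1})=:L_N$ for $t\leq\tau^{n,k}_N$, a \emph{deterministic} bound. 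On $[0,\tau^{n,k}_N]$ we run the scheme of Lemma \ref{bie16} (freezing the coefficients after $\tau^{n,k}_N$): we let $\Gamma_{n,k}$ solve $d\Gamma_{n,k}=(A-I)\Gamma_{n,k}\,dt+[G^\prime(u^x_n)\rho_{n,k}+r^G_{n,k}]\,dw$, $\Gamma_{n,k}(0)=0$, for which the stochastic convolution estimate of \cite[Theorem 4.2 and Proposition 4.5]{cerrai} gives $\E\sup_{s\leq t}|\Gamma_{n,k}(s)|_E^p\leq c_p(t)\int_0^t\E\le(|\rho_{n,k}(s)|_E+|r^G_{n,k}(s)|_{{\cal L}(E)}\r)^p ds$ with $c_p(t)\to 0$ as $t\to 0$; setting $z_{n,k}:=\rho_{n,k}-\Gamma_{n,k}$ and using the dissipativity of $A-I$ in $E$ together with $|F^\prime_n(u^x_n(t))|_{{\cal L}(E)}\leq L_N$, we get on $[0,\tau^{n,k}_N]$
\[\frac{d^-}{dt}|z_{n,k}(t)|_E\leq L_N\,|z_{n,k}(t)|_E+(1+L_N)\,|\Gamma_{n,k}(t)|_E+|r^F_{n,k}(t)|_E.\]
Coupling these two bounds, working first on a subinterval $[0,\delta]$ of length $\delta=\delta(N,p,T)$ small enough to absorb the self-referential terms and then iterating over $[0,T]$, one obtains
\[\E\sup_{t\leq T\wedge\tau^{n,k}_N}|\rho_{n,k}(t)|_E^p\leq C_{N,p,T}\le[\E\le(\int_0^T|r^F_{n,k}(s)|_E\,ds\r)^p+\E\int_0^T|r^G_{n,k}(s)|_{{\cal L}(E)}^p\,ds\r].\]

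It remains to remove the localization. By \eqref{bien}, $\Pro(\tau^{n,k}_N<T)\leq c_{p,T}(1+R)^p N^{-p}$ for $x\in B_R(E)$, uniformly in $n,k$, and by \eqref{bie11} applied with exponent $2p$ one has $\sup_n\|\eta^h_n\|_{C^w_{2p,T}(E)}\leq M_{2p}e^{\omega_{2p}T}R$ for $h\in B_R(E)$; hence the Cauchy--Schwarz inequality gives $\E\le(\sup_{t\in[0,T]}|\rho_{n,k}(t)|_E^p;\tau^{n,k}_N<T\r)\leq C^\prime_{p,R,T}N^{-p/2}$, and combining with the previous display,
\[\sup_{x,h\in B_R(E)}\E\sup_{t\in[0,T]}|\rho_{n,k}(t)|_E^p\leq C_{N,p,T}\,\varepsilon_{n,k}(R)+C^\prime_{p,R,T}N^{-p/2},\]
with $\varepsilon_{n,k}(R)\to 0$ as $n,k\to\infty$. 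Letting $n,k\to\infty$ and then $N\to\infty$ shows that $\{\eta^h_n\}$ is Cauchy in $C^w_{p,T}(E)$, uniformly for $x,h\in B_R(E)$, which is \eqref{bie29}; since the argument holds for every $p\geq 1$, the convergence takes place in each $C^w_{q,T}(E)$. That the limit $\eta^h$ solves \eqref{bie15} then follows by passing to the limit, as $n\to\infty$, in the mild form of \eqref{bie12}, each term converging thanks to \eqref{bie29}, Lemma \ref{pr1}, \eqref{6.1} and the moment bounds, and \eqref{bie11bis} is obtained from \eqref{bie11} and \eqref{bie29}. The main difficulty is precisely the polynomial unboundedness of $F^\prime_n$: in the absence of the positivity mechanism of Lemma \ref{bie16}, localization by $\tau^{n,k}_N$ seems unavoidable, and the delicate point is that the localized estimates can be recombined into a genuine convergence statement — which works only because the moment bounds of Lemma \ref{bie16} are uniform in $n$ and hold for all $p\geq 1$, so that the contribution of the exceptional set $\{\tau^{n,k}_N<T\}$ is small uniformly in $n,k$.
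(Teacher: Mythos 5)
Your argument is correct in outline, but it takes a much longer road than the one the paper travels, and your remark that localization by the stopping times $\tau^{n,k}_N$ ``seems unavoidable'' is exactly where the two part ways. The paper never writes down an equation for the difference $\eta^h_{n+k}-\eta^h_n$ at all. It exploits the specific form of the truncation: by \eqref{ro}, $f_n(\xi,\rho)=f(\xi,\rho)$ and $D_\rho f_n(\xi,\rho)=D_\rho f(\xi,\rho)$ whenever $|\rho|\leq n$, so on the event $\{\sup_{t\in[0,T]}|u^x(t)|_E\leq n\}$ one has, by pathwise uniqueness, $u^x_n=u^x_{n+k}=u^x$, the two first-derivative equations then have identical coefficients there, and consequently $\eta^h_{n+k}(t)=\eta^h_n(t)$ on that event. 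The difference is therefore supported on $\{\sup_{t\in[0,T]}|u^x(t)|_E>n\}$, and a single application of the Cauchy--Schwarz inequality, combined with the uniform $2p$-moment bound \eqref{bie11} and the Chebyshev estimate coming from \eqref{bie}, yields $\|\eta^h_{n+k}-\eta^h_n\|^{2p}_{C^w_{p,T}(E)}\leq c_p(T)\,n^{-2p}|h|_E^{2p}(1+|x|_E^{2p})$, i.e.\ the Cauchy property with an explicit rate, uniformly on balls. This is precisely the tail mechanism you invoke to remove your localization, but applied directly to the full difference, so that the entire first half of your proof --- the remainder estimates for $r^F_{n,k}$ and $r^G_{n,k}$, the stochastic-convolution bound for $\Gamma_{n,k}$, the $d^-/dt$ inequality for $z_{n,k}$, and the two-scale Gronwall iteration on $[0,\delta]$ --- is not needed. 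Your route would be the right one for a generic family of Lipschitz approximations of $F$, and it is workable here; its price is the delicate bookkeeping you only sketch (freezing coefficients after $\tau^{n,k}_N$, $N$-dependent constants, the order of limits $n,k\to\infty$ before $N\to\infty$), all of which evaporates once one notices that these particular approximations coincide with $F$, and hence with each other, on the ball of radius $n$ in $E$. The identification of the limit as the mild solution of \eqref{bie15} and the bound \eqref{bie11bis} are obtained in the paper exactly as you propose.
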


\begin{proof}
For any $n, k \in\,\nat$ we have
\[\begin{array}{l}
\ds{\|\eta^h_{n+k}-\eta^h_n\|_{C^w_{p,T}(E)}^p=\E\le( \sup_{t \in\,[0,T]}|\eta^h_{n+k}(t)-\eta^h_{n}(t)|_E^p; \sup_{t \in\,[0,T]}|u^x(t)|_E\leq n \r)}\\
\vs
\ds{+\E\le( \sup_{t \in\,[0,T]}|\eta^h_{n+k}(t)-\eta^h_{n}(t)|_E^p; \sup_{t \in\,[0,T]}|u^x(t)|_E> n \r).}
\end{array}\]
Since
\[\begin{array}{l}
\ds{\le\{\sup_{t \in\,[0,T]}|u^x(t)|_E\leq n\r\} \subseteq \le\{\eta^h_{n+k}(t)=\eta^h_n(t),\ t \in\,[0,T]\r\},}
\end{array}\]
thanks to \eqref{bie} and \eqref{bie11} we get
\begin{equation}
\label{bie30}
\begin{array}{l}
\ds{\|\eta^h_{n+k}-\eta^h_n\|_{C^w_{p,T}(E)}^{2p}\leq \E\le( \sup_{t \in\,[0,T]}|\eta^h_{n+k}(t)-\eta^h_{n}(t)|_E^{2p}\r)\,
\Pro\le(\sup_{t \in\,[0,T]}|u^x(t)|_E> n \r)}\\
\vs
\ds{\leq \frac{c_p(T)}{n^{2p}}\,|h|^{2p}_E\le(1+|x|^{2p}_E\r),}
\end{array}
\end{equation}
and this implies that  $\{\eta^h_n\}_{n \in\,\nat}$ is a Cauchy sequence in $C^w_{p,T}(E)$.

Let $\eta^h$ be its limit and let $R>0$ and $n \in\,\nat$. For any $m\geq n$ and $x, h \in\,B_R(E)$, due to \eqref{bie30} we have
\[\begin{array}{l}
\ds{\|\eta^h_n-\eta^h\|_{C^w_{p,T}(E)}\leq \|\eta^h_n-\eta^h_m\|_{C^w_{p,T}(E)}+\|\eta^h_m-\eta^h\|_{C^w_{p,T}(E)}}\\
\vs
\ds{\leq \frac{c_p(T,R)}{n}+\|\eta^h_m-\eta^h\|_{C^w_{p,T}(E)}}
\end{array}\]
Therefore, if we fix $\e>0$ and $\bar{m}=m(\e,x,h,\rho,T,p)\geq n$ such that 
\[\|\eta^h_{\bar{m}}-\eta^h\|_{C^w_{p,T}(E)}<\e,\]
due to the arbitrariness of $\e>0$ we get \eqref{bie29}.

 Moreover, as 
\[\eta^h_n(t)=e^{tA}h+\int_0^t e^{(t-s)A}F^\prime_n(u^x_n(s))\eta^h_n(s)\,ds+\int_0^t e^{(t-s)A}G^\prime_n(u^x_n(s))\eta^h_n(s)\,dw(s),\]
and, in addition to \eqref{bie29}, also \eqref{bie2} holds, we can take the limit in both sides and we get that the limit $\eta^h$ is a mild solution of equation \eqref{bie15}.
\end{proof}

\bigskip

\begin{Remark}
\label{R31}
{\em In \cite[Chapter 4]{tesi} the differentiability of the mapping
\begin{equation}
\label{bie40}
x \in\,H\mapsto u^x_n  \in\,L^x_{p,T}(H),\end{equation}
 has been studied, in the case $g(\xi,\rho)=1$. The proof of \cite[Proposition 4.2.1]{tesi} can be adapted to the present situation of a general smooth $g$ and we have that  the mapping in \eqref{bie40} is differentiable and the derivative $D_x u^x_n h$ satisfies equation \eqref{bie12}. Moreover, by proceeding as in \cite[Lemma 4.2.2]{tesi}, it is possible to prove that $D_x u^x_n(t) h \in\,L^p(0,1)$ for any $t>0$, $\Pro$-a.s., and for any $p\geq 2$ and $q\geq 0$
 \begin{equation}
 \label{bie41}
 \sup_{x \in\,H}\E\,|D_xu^x_n (t)h|^q_{L^p(0,1)}\leq c_{p,q,n} (t\wedge 1)^{-\frac{(p-2)q}{4p}}|h|_H^q.
 \end{equation}}
 \end{Remark}
 
Next, we show that we can estimate $\eta^h$ in $H$.

\begin{Lemma}
\label{Lbr24}
Under Hypothesis \ref{H1}, for any $T>0$ and $p\geq 1$ we have
\begin{equation}
\label{br25}
\|\eta^h\|_{C^w_{p,T}(H)}\leq c_{p}(T)\,|h|_H.
\end{equation}
Moreover, for any $p\geq 1$ and $q \in\,[2,+\infty]$ such that $p(q-2)/4q<1$, we have
\begin{equation}
\label{br30}
\E\,|\eta^h(t)|_{L^q(0,1)}^p\leq c_{p,q}(t) t^{-\frac {p(q-2)}{4q}}|h|_H^p,\ \ \ \ t>0.
\end{equation}

If we assume that the constant $\la$ in \eqref{h12} is strictly negative, then for any $p\geq 1$ there exists $\d_p>0$ such that
\begin{equation}
\label{br26}
\E|\eta^h(t)|_H^p\leq c_{p}e^{-\d_p t}\,|h|^p_H,\ \ \ \ t\geq 0,
\end{equation}
and
\begin{equation}
\label{br31}
\E\,|\eta^h(t)|_E^p\leq c_p\,e^{-\d_p t}\, (t\wedge 1)^{-\frac p4}|h|_H^p.
\end{equation}

\end{Lemma}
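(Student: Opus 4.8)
The idea is to transfer estimates from the finite-dimensional-like approximations $\eta_n^h$ (which satisfy \eqref{bie12} with Lipschitz coefficients, so that It\^o's formula is available in $H$) to the limit $\eta^h$ via the convergence \eqref{bie29}, and then to use the smoothing of $e^{tA}$ from $L^q$ into $L^\infty$ recorded in \eqref{br100}. First I would prove \eqref{br25}: working with $\eta_n^h$, I apply It\^o's formula to $|\eta_n^h(t)|_H^2$ (legitimate since $F_n, G$ are Lipschitz in $H$, cf. Remark \ref{R31}), using the dissipativity bound \eqref{dissiF} with constant $\la$ to control $\langle F_n'(u_n^x(t))\eta_n^h(t),\eta_n^h(t)\rangle_H\le \la|\eta_n^h(t)|_H^2$, together with $\|G'(\cdot)\|\le c$ (boundedness of $D_\rho g$) to bound the martingale part and the It\^o correction term $\tfrac12\sum_k|G'(u_n^x)\eta_n^h e_k|_H^2\le c\,|\eta_n^h|_H^2$ — here one uses that $G'(x)y$ acts as multiplication by the bounded function $D_\rho g(\xi,x(\xi))y(\xi)$, so the Hilbert--Schmidt-type sum over $k$ is actually $\sum_k |(D_\rho g(\xi,x)\eta e_k)(\xi)|_H^2$ which, since we are dealing with space-time white noise, requires the usual care but is dominated using the structure of $G$ as in \cite{cerrai}. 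A Gronwall argument then gives $\E\sup_{t\le T}|\eta_n^h(t)|_H^p\le c_p(T)|h|_H^p$ uniformly in $n$, and passing to the limit through \eqref{bie29} (after extracting an a.s.-convergent subsequence, or directly since convergence holds in $C^w_{p,T}(E)\subset C^w_{p,T}(H)$) yields \eqref{br25}. When $\la<0$, the same computation gives the exponential factor: the deterministic drift contributes $-|\la|$ and the noise correction contributes $+c$, so choosing $p$ large is not needed for \eqref{br26} in the form stated — rather one absorbs the $+c$ into $|\la|$ only if $|\la|>c$; since the statement only claims existence of \emph{some} $\d_p>0$, I would instead run the estimate on $|\eta_n^h(t)|_H^p$ directly, where the drift gives $-p|\la|$ and the martingale-plus-correction gives $\le C_p$, and note that the stated $\d_p$ is whatever comes out (the point being it is strictly positive once $\la<0$ and $p$ is not too large, which is exactly the regime in which such spectral-gap results hold; if a genuine $\d_p>0$ for all $p$ is needed, one uses the factorization/bootstrap as in \cite{cerrai94,cerrai}).

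Next, for the $L^q$-smoothing estimates \eqref{br30} and \eqref{br31}, I would use the mild formulation
\[\eta^h(t)=e^{tA}h+\int_0^t e^{(t-s)A}F'(u^x(s))\eta^h(s)\,ds+\int_0^t e^{(t-s)A}G'(u^x(s))\eta^h(s)\,dw(s),\]
which holds for the limit by Lemma \ref{bie28}. For the first term, \eqref{br100} gives $|e^{tA}h|_{L^q}\le M_{2,q}\,t^{-\frac{q-2}{4q}}|h|_H$ (taking $p=2$ there), which already has the right exponent. For the deterministic convolution, I estimate $|e^{(t-s)A}F'(u^x(s))\eta^h(s)|_{L^q}$ by mapping from $L^2$ (or a slightly better space, exploiting $|F'(x)y|_{L^r}\le c(1+|x|_E^{m-1})|y|_{L^2}$ with appropriate $r$) into $L^q$, picking up an integrable singularity $(t-s)^{-\frac{q-2}{4q}}$ or milder, and using \eqref{br25} together with \eqref{bie} to control the $E$-norms of $u^x$; a generalized Gronwall/Volterra estimate then closes it. For the stochastic convolution, I apply the factorization method (as in \cite[Theorem 4.2]{cerrai}): writing it as a fractional-integral of a process, using the Burkholder inequality in $L^q$ together with the ultracontractive bounds \eqref{br100} to handle $\|e^{(t-s)A}G'(u^x(s))\eta^h(s)\|_{\L(H;L^q)}$-type quantities — here the hypothesis $p(q-2)/4q<1$ is exactly what makes the relevant time-integral converge. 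Combining the three contributions and invoking \eqref{br25} (and \eqref{br26} in the dissipative case, to carry the exponential weight through the convolutions, splitting $\int_0^t$ at $t-1$ and using $t^{-\frac p4}\wedge 1$ near the singularity) gives \eqref{br30} and, for $q=\infty$, \eqref{br31}.

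The main obstacle I anticipate is the stochastic-convolution estimate in the multiplicative setting: unlike the additive-noise case one cannot simply invoke that $e^{tA}$ is Hilbert--Schmidt and separate the noise from $\eta^h$, because $G'(u^x(s))\eta^h(s)$ is itself a process whose $L^q$-regularity is what we are trying to establish — so the argument is genuinely a fixed-point/bootstrap in the scale of $L^q$-spaces, and one must check that the time-singularities produced by \eqref{br100} at each step remain integrable under the stated constraint $p(q-2)/4q<1$. This is where the structure $[G(x)y](\xi)=g(\xi,x(\xi))y(\xi)$ (so that $G$ maps $L^\infty$ into $H$ and $H$ into $L^1$ with Lipschitz dependence, \eqref{bds2}--\eqref{bds1}) must be used carefully, together with the a priori bounds \eqref{bie}, \eqref{bie11bis}, and the uniform-in-$n$ control from Lemma \ref{bie16}, so that the passage to the limit $n\to\infty$ preserves every estimate. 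The exponential decay \eqref{br26}, \eqref{br31} when $\la<0$ is then obtained by redoing the Gronwall step with the genuine negative drift and propagating the weight $e^{-\d_p s}$ through the Volterra kernels, which is routine once the non-weighted versions are in hand.
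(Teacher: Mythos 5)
Your first step --- applying It\^o's formula to $|\eta^h_n(t)|_H^2$ and bounding the correction term $\tfrac12\sum_k|G'(u^x_n)\eta^h_n e_k|_H^2$ by $c\,|\eta^h_n|_H^2$ --- is where the argument breaks down, and it is precisely the obstruction the paper is designed to circumvent. Since $G'(x)y$ acts as multiplication by the function $D_\rho g(\xi,x(\xi))y(\xi)$ and $\{e_k\}$ is a complete orthonormal system of $H$, the It\^o correction would be
\[
\frac12\sum_{k=1}^\infty\int_0^1\le|D_\rho g(\xi,u^x_n(\xi))\,\eta^h_n(\xi)\r|^2 e_k^2(\xi)\,d\xi ,
\]
and $\sum_k e_k^2(\xi)$ diverges: for space-time white noise the stochastic integral $\int_0^t G'(u^x_n(s))\eta^h_n(s)\,dw(s)$ does not even exist in $H$ (only its convolution with the smoothing kernel $e^{(t-s)A}$ does), so there is no strong or variational formulation of \eqref{bie12} in $H$ to which It\^o's formula applies. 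The paper says this explicitly in the proof of Lemma \ref{bie16} (``we cannot use the It\^o formula, due to presence of the white noise'') and replaces the energy estimate by a comparison argument: subtract the stochastic convolution $\Gamma^h$ solving $d\gamma=(A+\la)\gamma\,dt+G'(u^x)\eta^h\,dw$, note that $\rho=\eta^h-\Gamma^h$ solves a pathwise deterministic equation, split $h=h^+-h^-$, and use the positivity $\eta^{h^\pm}\ge 0$ together with $f'-\la\le 0$ and the order preservation of $e^{t(A+\la)}$ to obtain the pointwise bound $\rho^\pm(t)\le e^{t(A+\la)}h^\pm$ of \eqref{br32}, hence $|\rho(t)|_H\le e^{\la t}|h|_H$ almost surely; Gronwall applied to $\eta^h=\rho+\Gamma^h$ then gives \eqref{br25}. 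This positivity idea is absent from your plan and cannot be replaced by the energy method you propose.

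Two further consequences. First, your treatment of \eqref{br26} does not deliver the statement: the lemma asserts $\d_p>0$ for \emph{every} $p\ge1$, whereas your balance of ``drift $-p|\la|$ against correction $+C_p$'' (even if the correction were finite) only closes for $p$ in a restricted range, and deferring to ``whatever comes out'' is not a proof; the paper instead combines the same positivity argument with the exponential estimates of \cite{cerraisiam}. Second, for \eqref{br30}--\eqref{br31} your mild-formulation bootstrap is workable in outline, but the paper sidesteps the delicate fixed point in the scale of $L^q$ spaces that you anticipate by exploiting once more the pointwise inequality $|\eta^h(t)|\le e^{t(A+\la)}|h|+|\Gamma(t)|$ coming from \eqref{br32}: only the single singularity $t^{-\frac{q-2}{4q}}$ from $e^{tA}:H\to L^q(0,1)$ enters, the stochastic convolution is handled by \eqref{br34}, and the constraint $p(q-2)/4q<1$ is used exactly once in the closing comparison step. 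The proof therefore has to be rebuilt around positivity and comparison rather than around It\^o energy estimates.
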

\begin{proof}
If we denote by $\Gamma^h(t)$ the mild solution of
\[d\gamma(t)=(A+\la)\gamma(t)\,dt+G^\prime(u^x(t))\eta^h(t)\,dw(t),\ \ \ \ \gamma(0)=0,\]
where $\la$ is the constant introduced in \eqref{h12},
we have that $\rho(t):=\eta^h(t)-\Gamma^h(t)$ solves the problem
\[\frac{d\rho(t)}{dt}=(A+\la)\rho(t)+(F^\prime(u^x(t))-\la)\eta^h(t),\ \ \ \ \rho(0)=h.\]
As in the proof of Lemma \ref{bie16}, we decompose $\rho(t)=\rho^+(t)-\rho^-(t)$, where
\[\frac{d\rho^{\pm}(t)}{dt}=(A+\la)\rho^{\pm}(t)+(F^\prime(u^x(t))-\la)\eta^{h^{\pm}}(t),\ \ \ \ \rho(0)=h^{\pm}.\]
As 
\[\Pro\le(\eta^{h^{\pm}}(t)\geq 0,\ \ t \in\,[0,T]\r)=1,\]
and $f^\prime-\la\leq 0$, we have
\begin{equation}
\label{br32}
\rho^{\pm}(t)=e^{t(A+\la)} h^{\pm}+\int_0^t e^{(t-s)(A+\la)}(F^\prime(u^x(s))-\la)\eta^{h^{\pm}}(s)\,ds\leq e^{t(A+\la)} h^{\pm},
\end{equation}
and then 
\[|\rho(t)|_H\leq e^{\la t}|h|_H,\ \ \ \ \Pro-\text{a.s.}\]
Therefore, since for any $q \in\,[2,+\infty]$ and $p\geq 0$
\begin{equation}
\label{br34}
\E\sup_{t \in\,[0,T]}|\Gamma(t)|_{L^q(0,1)}^p\leq c_{p,q}(t)\int_0^t\E\,|\eta^h(s)|_{L^q(0,1)}^p\,ds,
\end{equation}
we can conclude that
\[\E\sup_{s \in\,[0,t]}|\eta^h(s)|_H^p\leq c_p\,e^{\la p t}|h|^p_H+c_{p}(t)\int_0^t\E\,\sup_{r \in\,[0,s]}|\eta^h(r)|_H^p\,ds\]
and \eqref{br25} follows from the Gronwall Lemma.

In order to prove \eqref{br30}, we notice that, due to \eqref{br32},
\[|\eta^h(t)|\leq e^{t(A+\la)}|h|+|\Gamma(t)|,\]
so that, in view of \eqref{br100} and \eqref{br34}, we can conclude that
\[\begin{array}{l}
\ds{ \E\,|\eta^h(t)|^p_{L^q(0,1)}\leq c_{p} |e^{t(A+\la)}|h||^p_{L^q(0,1)}+c_p\,\E|\Gamma(t)|_{L^q(0,1)}^p}\\
\vs
\ds{\leq c_{p,q}(t)t^{-\frac{p(q-2)}{4q}}|h|_H^p+c_{p,q}(t)\int_0^t\E\,|\eta^h(s)|_{L^q(0,1)}^p\,ds.}
\end{array}\]
If $p(q-2)/4q<1$, we can conclude  by a comparison argument.

Finally, \eqref{br26} and \eqref{br31} can be proved by combining together the positivity arguments used above, with the exponential estimates proved in \cite[Examples 4.4 and 4.5]{cerraisiam}.

\end{proof}

\section{The second derivative}
\label{subsec3.2}

Now, we investigate the second order differentiability of $u^x_n$ with respect to $x \in\,E$.
For any process $z \in\,L^w_{p,T}(E)$ and $x \in\,E$ we define
\[[T_n(x)z](t)=\int_0^t e^{(t-s)A}F^\prime_n(u^x_n(s))z(s)\,ds+\int_0^t e^{(t-s)A}G^\prime(u^x_n(s))z(s)\,dw(s),\]
so that  equation \eqref{bie12} can be rewritten as
\[\eta_n^x(t)=e^{tA}h+T_n(x)\,\eta_n^x(t).\]

Due to the boundedness of $D_\rho f_n(\xi,\cdot)$ and $D_\rho g(\xi,\cdot)$, we have that there exists $T_p=T_p(n)>0$ such that for any $x \in\,E$
\[\|T_n(x)\|_{{\cal L}(L^w_{p,T}(E))}\leq \frac 12,\ \ \ \ T\leq T_p,\]
so that
\begin{equation}
\label{bie18}
\eta_n^h=\le[I-T_n(x)\r]^{-1}e^{\cdot A}h.
\end{equation}
Since $f_n$ and $g$ are twice differentiable with bounded derivatives and Lemma \ref{bie16} holds, we have that the mapping 
\[x \in\,E\mapsto T_n(x)z \in\,L^w_{p,T}(E)\]
is differentiable.
Therefore,  we can differentiate both sides in \eqref{bie18} with respect to $x \in\,E$ along the direction $k \in\,E$ and we obtain
\[D_x\eta^h_nk=\le[I-T_n(x)\r]^{-1}D_x\le[T_n(x)\eta^h_n\r] k,\]
so that
\[D_x\eta^h_nk-T_n(x) D_x\eta^h_nk=D_x\le[T_n(x)\eta^h_n\r] k.\]
Now, it is immediate to check that for any $k \in\,E$
\[\begin{array}{l}
\ds{D_x\le[T_n(x)z\r] k\,(t)}\\
\vs
\ds{=\int_0^t e^{(t-s)A}F^{\prime \prime}_n(u^x_n(s))(z(s),\eta^k_n(s))\,ds+\int_0^t e^{(t-s)A}G^{\prime\prime}(u^x_n(s))(z(s),\eta^k_n(s))\,dw(s),}
\end{array}\]
and then $\zeta_n^{h,k}:=D_x\eta^h_n=D^2_x u^x_n(h,k)$ satisfies the equation
\begin{equation}
\label{bie20}
\begin{array}{l}
\ds{d\zeta_n^{h,k}(t)=\le[A\zeta_n^{h,k}(t)+F_n^\prime(u^x_n(t))\zeta_n^{h,k}(t)+F_n^{\prime \prime}(u^x_n(t))(\eta_n^{h}(t),\eta^k_n(t))\r]\,dt}\\
\vs
\ds{+\le[G^\prime(u^x_n(t))\zeta_n^{h,k}(t)+G^{\prime \prime}(u^x_n(t))(\eta_n^{h}(t),\eta^k_n(t))\r]\,dw(t),\ \ \ \ \ \zeta^{h,k}(0)=0.}
\end{array}
\end{equation}

\begin{Lemma}
\label{bie19}
Under Hypothesis \ref{H1},  for any $T>0$ and $p\geq 1$ the process $u^x_n$ is twice differentiable in $L^w_{p,T}(E)$ with respect to $x \in\,E$. Moreover the second derivative $D^2_xu^x_n( h,k)=:\zeta^{h,k}_n$ belongs to  $C^w_{p,T}(E)$ and satisfies
\begin{equation}
\label{bie21}
\|\zeta^{h,k}_n\|_{C^w_{p,T}(E)}\leq c_{p}(T)\le(1+|x|_E^{(m-1)}\r)|h|_E\,|k|_E,
\end{equation}
for some continuous increasing function $c_p(T)$ independent of $n \in\,\nat$.
\end{Lemma}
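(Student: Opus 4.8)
\noindent\emph{Overall plan.} The differentiability of $u^x_n$ in $x$ and the fact that $\zeta^{h,k}_n:=D^2_xu^x_n(h,k)$ solves \eqref{bie20} have already been obtained by differentiating twice in $x$ the identity $\eta^h_n=\le[I-T_n(x)\r]^{-1}e^{\cdot A}h$ of \eqref{bie18}, so the only point left is the bound \eqref{bie21}, which in addition has to be \emph{uniform in $n$} (the naive route via the contraction $\Lambda_n$ on small time intervals cannot give a polynomial-in-$|x|_E$, $n$-independent bound). Since for fixed $n$ the coefficients of the linear equation \eqref{bie20} grow at most linearly, and the inhomogeneous terms
\[\Phi_n(t):=F^{\prime\prime}_n(u^x_n(t))(\eta^h_n(t),\eta^k_n(t)),\qquad \Psi_n(t):=G^{\prime\prime}(u^x_n(t))(\eta^h_n(t),\eta^k_n(t))\]
lie in $L^q(\Omega;C([0,T];E))$ for every $q\ge1$ by Lemma \ref{bie16}, the process $\zeta^{h,k}_n$ belongs a priori to $C^w_{q,T}(E)$ for every $q$, which legitimates the computations below. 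The first step is to estimate $\Phi_n$ and $\Psi_n$: by \eqref{sa3n} with $j=2$ (with a constant now \emph{independent} of $n$), Lemma \ref{bie16}, the $n$-uniform moment bound \eqref{bien} (valid for every exponent) and H\"older's inequality, one gets
\[\E\,|\Phi_n(s)|_E^p+\E\,|\Psi_n(s)|_E^p\le c_p(T)\,(1+|x|_E)^{(m-1)p}\,|h|_E^p\,|k|_E^p,\qquad s\in[0,T],\]
where I used $(m-2)^+\le m-1$ and the regularity of $g$ from Hypothesis \ref{H1}.

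\noindent\emph{Isolating the stochastic convolution.} Following the scheme of Lemmas \ref{bie16} and \ref{Lbr24}, I would let $\Gamma_n$ be the mild solution of $d\Gamma_n=[A-I]\Gamma_n\,dt+[G^\prime(u^x_n)\zeta^{h,k}_n+\Psi_n]\,dw$, with $\Gamma_n(0)=0$. Since $D_\rho g(\xi,\cdot)$ is bounded uniformly in $\xi$, the maximal estimate of \cite{cerrai} underlying \eqref{bie4} and \eqref{bie14} applies with an $n$-independent constant and yields
\[\E\sup_{s\le t}|\Gamma_n(s)|_E^p\le c_p(t)\int_0^t\Big(\E\,|\zeta^{h,k}_n(s)|_E^p+\E\,|\Psi_n(s)|_E^p\Big)\,ds .\]
Then $z_n:=\zeta^{h,k}_n-\Gamma_n$ solves the \emph{random} equation
\[\frac{dz_n}{dt}(t)=Az_n(t)+F^\prime_n(u^x_n(t))z_n(t)+\big[F^\prime_n(u^x_n(t))+I\big]\Gamma_n(t)+\Phi_n(t),\qquad z_n(0)=0 .\]

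\noindent\emph{Pathwise estimate and Gronwall.} Using $\langle Az_n,\d_{z_n}\rangle_E\le0$, the one-sided bound \eqref{h12} in the form $\langle F^\prime_n(u^x_n)z_n,\d_{z_n}\rangle_E=f^\prime_n(\xi_{z_n},u^x_n(\xi_{z_n}))|z_n|_E\le\la\,|z_n|_E$, and the fact that $|D_\rho f_n(\xi,\rho)|\le c(1+|\rho|^{m-1})$ with $c$ independent of $n$ (whence $|\langle[F^\prime_n(u^x_n)+I]\Gamma_n,\d_{z_n}\rangle_E|\le c(1+|u^x_n|_E^{m-1})|\Gamma_n|_E$), together with the definition \eqref{bds92} of $\d_{z_n}$, one obtains
\[\frac{d^-}{dt}|z_n(t)|_E\le\la\,|z_n(t)|_E+c\big(1+|u^x_n(t)|_E^{m-1}\big)|\Gamma_n(t)|_E+|\Phi_n(t)|_E ,\]
hence by comparison $\sup_{s\le t}|z_n(s)|_E\le e^{\la^+t}\int_0^t\big[c(1+|u^x_n(s)|_E^{m-1})|\Gamma_n(s)|_E+|\Phi_n(s)|_E\big]\,ds$. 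Taking $L^p(\Omega)$-norms, applying H\"older's inequality in $\Omega$ so that the whole power falls on $u^x_n$ — which is harmless since $\E\,|u^x_n(s)|_E^q\le c_q(T)(1+|x|_E)^q$ for every $q$ by \eqref{bien} — and inserting the two estimates above, one arrives at a Gronwall inequality for $t\mapsto\E\sup_{s\le t}|\zeta^{h,k}_n(s)|_E^p$ with forcing term $c_p(t)(1+|x|_E)^{(m-1)p}|h|_E^p|k|_E^p$; the Gronwall lemma then gives \eqref{bie21}. The constant is increasing in $T$ and independent of $n$ because each ingredient invoked — \eqref{sa3n}, \eqref{h12}, \eqref{bie11}, \eqref{bien}, and the maximal estimate of \cite{cerrai} — has an $n$-independent constant.

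\noindent\emph{Where the difficulty lies.} In contrast with Lemma \ref{bie16}, the positivity trick is not available here: $\zeta^{h,k}_n=D^2_xu^x_n(h,k)$ has no definite sign even when $h,k\ge0$, and $\Phi_n$ is not sign-definite since $f^{\prime\prime}$ is not. It is precisely this that forces the factor $1+|u^x_n|_E^{m-1}$ into the term $\langle[F^\prime_n(u^x_n)+I]\Gamma_n,\d_{z_n}\rangle_E$ — the price of the non-global boundedness of $f^\prime$ — which after integration becomes the $1+|x|_E^{m-1}$ of \eqref{bie21}, absent from the first-order bound \eqref{bie11}. I expect the genuinely delicate point to be the H\"older/moment bookkeeping in the last step: one has to arrange the argument so that the couplings between $u^x_n$ and $\Gamma_n$ (equivalently $\zeta^{h,k}_n$) are handled without losing integrability and so that the scheme closes with a constant that does not depend on $n$.
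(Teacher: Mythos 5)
Your reduction of the lemma to the a priori bound \eqref{bie21}, the identification of the inhomogeneous terms $F''_n(u^x_n)(\eta^h_n,\eta^k_n)$ and $G''(u^x_n)(\eta^h_n,\eta^k_n)$, and their $n$-uniform $L^p$-estimates via \eqref{sa3n}, \eqref{bien} and \eqref{bie11} all match the paper. The gap is in the decomposition $\zeta^{h,k}_n=z_n+\Gamma_n$: your Gronwall scheme does not close. Your $\Gamma_n$ carries the noise $[G'(u^x_n)\zeta^{h,k}_n+\Psi_n]\,dw$, so its maximal inequality controls $\E\sup_{s\leq t}|\Gamma_n(s)|_E^p$ by $\int_0^t\E|\zeta^{h,k}_n(s)|_E^p\,ds$ plus data; on the other hand, your pathwise estimate for $z_n$ contains the product $(1+|u^x_n|_E^{m-1})|\Gamma_n|_E$, and since $f'_n$ is not bounded uniformly in $n$ you must split it by H\"older in $\Omega$, which requires $\E|\Gamma_n(s)|_E^{pq}$ for some $q>1$, hence $\E|\zeta^{h,k}_n(s)|_E^{pq}$, hence $\E|\Gamma_n(s)|_E^{pqq'}$, and so on: every pass through the loop strictly raises the exponent, so there is no finite family of moments on which Gronwall applies, and the $n$-dependent a priori finiteness of all moments does not rescue an $n$-\emph{uniform} bound. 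You flag the ``H\"older/moment bookkeeping'' as the delicate point, but with this splitting the coupling is genuinely circular, not a matter of bookkeeping. Note also that the obvious repair --- putting only $G'(u^x_n)\Gamma_n+\Psi_n$ in the noise of $\Gamma_n$, so that its estimate closes in $\Gamma_n$ alone --- leaves the residual multiplicative noise $G'(u^x_n)z_n\,dw$ in the equation for $z_n$, after which the pathwise subdifferential estimate is no longer available.

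The paper resolves precisely this tension with a different decomposition, \eqref{bie24}. It takes $\Gamma_n^{h,k}$ to solve \eqref{bie23}, whose noise is $G'(u^x_n)\Gamma+G''(u^x_n)(\eta^h_n,\eta^k_n)$: an equation closed in $\Gamma$ itself, so that \eqref{bie27} gives $\E\sup_{s\leq T}|\Gamma^{h,k}_n(s)|_E^q\leq c_q(T)|h|_E^q|k|_E^q$ for \emph{every} $q$, uniformly in $n$ and with no reference to $\zeta^{h,k}_n$. Both the drift $F'_n(u^x_n)\zeta$ and the noise $G'(u^x_n)\zeta$ are instead absorbed into the stochastic evolution operator $\Phi_n(t,s)$ of the homogeneous first-variation equation, which already carries the $n$-uniform bound \eqref{bie25} inherited, via the positivity argument, from Lemma \ref{bie16} --- so positivity is reused here as a black box even though $\zeta^{h,k}_n$ itself has no sign. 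The remainder is then written as $\int_0^t\Phi_n(t,s)\Sigma^{h,k}_n(s)\,ds$ with $\Sigma^{h,k}_n=F'_n(u^x_n)\Gamma^{h,k}_n+F''_n(u^x_n)(\eta^h_n,\eta^k_n)$ (this variation-of-constants identity must itself be proved, via the stochastic Fubini theorem), and the only H\"older step needed acts on quantities all of which are already controlled in every $L^q$ independently of $\zeta^{h,k}_n$. As it stands, your argument does not establish \eqref{bie21}; it would need to be restructured along these lines.
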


\begin{proof}
We have already seen that $u^x_n$ is twice differentiable in $L^w_{p,T}(E)$ and $D^2_xu^x_n( h,k)$ satisfies equation \eqref{bie20}. Hence, it only remains to prove estimate \eqref{bie21}. 

As proved in Lemma \ref{bie16}, for any $x\in\,E$ and any $h \in\,L^p(\Omega;E)$ which is ${\cal F}_s$-measurable, the equation
\begin{equation}
\label{bie22}
d\eta(t)=\le[A \eta(t) +F^\prime_n(u^x_n(t))\eta(t)\r]\,dt+G^\prime(u^x_n(t))\eta(t)\,dw(t),\ \ \ \eta(s)=h,
\end{equation}
admits a unique solution $\eta^h_n(s,\cdot) \in\,L^p(\Omega;C([s,T];E))$ such that
\[\E\sup_{t \in\,[s,T]}|\eta^h_n(s,t)|_E^p\leq M_p\,e^{\omega_p(T-s)}\E\,|h|^p_E.\]
Hence, we can associate to equation \eqref{bie22} a stochastic evolution operator $\Phi_n(t,s)$ such that
\[\eta^h_n (s,t)=\Phi_n(t,s)h,\ \ \ \ h \in\,L^p(\Omega;E),\]
and such that
\begin{equation}
\label{bie25}
\E\sup_{r\in\,[s,t]}|\Phi_n(r,s)h|_E^p\leq M_p\,e^{\omega_p(t-s)}\E|h|_E^p,\ \ \ 0\leq s\leq t.
\end{equation}
We claim that $\zeta_n^{h,k}$ can be represented in terms of the operator $\Phi_n(s,t)$ as
\begin{equation}
\label{bie24}
\zeta^{h,k}_n(t)=\Gamma_n^{h,k}(t)+\int_{0}^t\Phi_n(t,s)\Sigma^{h,k}_n(s)\,ds,
\end{equation}
where 
$\Gamma_n^{h,k}$ is the solution of the problem
\begin{equation}
\label{bie23}
d\Gamma(t)=A \Gamma(t)\,dt+\le[G^\prime(u^x_n(t))\Gamma(t)+G^{\prime \prime}(u^x_n(t))(\eta^h_n(t),\eta^k_n(t))\r]\,dw(t),\ \ \ \Gamma(0)=0,\end{equation}
and
\[\Sigma^{h,k}_n(t)=F_n^\prime(u^x_n(t))\Gamma_n^{h,k}(t)+F_n^{\prime \prime}(u^x_n(t))(\eta^h_n(t),\eta^k_n(t)).\]
Clearly, in order to prove \eqref{bie24} we have to show that $\int_{0}^t\Phi_n(t,s)\Sigma^{h,k}_n(s)\,ds$ solves the problem
\[dz(t)=\le[Az(t)+F^\prime_n(u^x_n(t))z(t)+\Sigma^{h,k}_n(t)\r]\,dt+G^\prime(u^x_n(t))z(t)\,dw(t),\ \ \ \ z(0)=0.\]
More in general, we have to prove that for any $\Sigma \in\,C^w_{p,T}(E)$ the mild solution of the problem
\begin{equation}
\label{bie26}
dz(t)=\le[Az(t)+F^\prime_n(u^x_n(t))z(t)+\Sigma(t)\r]\,dt+G^\prime(u^x_n(t))z(t)\,dw(t),\ \ \ \ z(0)=0,
\end{equation}
is given by
\[\hat{z}(t):=\int_0^t \Phi_n(t,s)\Sigma(s)\,ds.
\]
We have
\[\begin{array}{l}
\ds{\int_0^te^{(t-s)A}F^\prime_n(u^x_n(s))\hat{z}(s)\,ds=\int_0^te^{(t-s)A}F^\prime_n(u^x_n(s))\int_0^s\Phi_n(s,r)\Sigma(r)\,dr\,ds}\\
\vs
\ds{=\int_0^t\int_r^te^{(t-s)A}F^\prime_n(u^x_n(s))\Phi_n(s,r)\Sigma(r)\,ds\,dr}
\end{array}
\]
and analogously, by the stochastic Fubini theorem,
\[\begin{array}{l}
\ds{\int_0^te^{(t-s)A}G^\prime(u^x_n(s))\hat{z}(s)\,dw(s)=\int_0^te^{(t-s)A}G^\prime(u^x_n(s))\int_0^s\Phi_n(s,r)\Sigma(r)\,dr\,dw(s)}\\
\vs
\ds{=\int_0^t\int_r^te^{(t-s)A}G^\prime_n(u^x_n(s))\Phi_n(s,r)\Sigma(r)\,dw(s)\,dr}
\end{array}
\]
Now, recalling the definition of $\Phi_n(t,s)\Sigma$, we have
\[\begin{array}{l}
\ds{\int_0^t\le[\int_r^te^{(t-s)A}F^\prime_n(u^x_n(s))\Phi_n(s,r)\Sigma(r)\,ds+\int_r^te^{(t-s)A}G^\prime_n(u^x_n(s))\Phi_n(s,r)\Sigma(r)\,dw(s)\r]\,dr}\\
\vs
\ds{=\int_0^t\le[\Phi_n(t,r)\Sigma(r)-e^{(t-r)A}\Sigma(r)\r]\,dr=\hat{z}(t)-\int_0^te^{(t-r)A}\Sigma(r)\,dr,}
\end{array}\]
so that $\hat{z}$ is the mild solution of equation \eqref{bie26}.

Once we have representation \eqref{bie24} for $\zeta_n^{h,k}$, we can proceed with the proof of estimate \eqref{bie21}. As $\Gamma_n^{h,k}$ solves equation \eqref{bie23}, we have
\[\Gamma_n^{h,k}(t)=\int_0^te^{(t-s)A}\le[G^\prime(u^x_n(s))\Gamma_n^{h,k}(s)+G^{\prime \prime}(u^x_n(s))(\eta^h_n(s),\eta^k_n(s))\r]\,dw(s).\]
Therefore, due to the boundedness of $D_\rho g(\xi,\rho)$ and $D^2_\rho g(\xi,\rho)$, from \eqref{bie11} and \eqref{bie4} we get
\[\begin{array}{l}
\ds{\E\sup_{s \in\,[0,t]}|\Gamma_n^{h,k}(s)|_E^p\leq c_p(T)\int_0^t\E\,|\Gamma^{h,k}_n(s)|_E^p\,ds+c_p(T)\,|h|_E^p\,|k|_E^p,}
\end{array}\]
so that, from the Gronwall Lemma, we can conclude
\begin{equation}
\label{bie27}
\E\sup_{s \in\,[0,t]}|\Gamma_n^{h,k}(s)|_E^p\leq c_p(T)\,|h|_E^p\,|k|_E^p.
\end{equation}
Next, as  \eqref{bien} and \eqref{bie11} hold and as the derivatives of $F_n$ satisfy \eqref{sa3n}, due to \eqref{bie25} we have
\[\begin{array}{l}
\ds{\E\sup_{t \in\,[0,T]}\le|\int_{0}^t\Phi_n(t,s)\Sigma^{h,k}_n(s)\,ds\r|^p_E \leq c_p(T)\int_0^T\E\le|\Sigma^{h,k}_n(s)\r|_E^p\,ds}\\
\vs
\ds{\leq c_p(T)\le(1+|x|_E^{(m-1)p}\r)\|\Gamma^{h,k}_n\|_{C^w_{2p,T}(E)}^{\frac 12}+c_p(T)\le(1+|x|_E^{(m-2)p}\r)|h|_E^p |k|_E^p,}
\end{array}\]
and then, thanks to \eqref{bie27}, we get
\[\E\sup_{t \in\,[0,T]}\le|\int_{0}^t\Phi_n(t,s)\Sigma^{h,k}_n(s)\,ds\r|^p_E \leq c_p(T)\le(1+|x|_E^{(m-1)p}\r)|h|_E^p |k|_E^p.\]
Together with \eqref{bie27}, this implies \eqref{bie21}.
\end{proof}

In view of the previous lemmas, by arguing as in the proof of Lemma \ref{bie28}, we get the following result.

\begin{Lemma}
\label{bie31}
Under Hypothesis \ref{H1},  there exists $\zeta^{h,k} \in\,C^w_{p.T}(E)$ such that for any $R>0$
\begin{equation}
\label{bie 29}
\lim_{n\to \infty}\sup_{x, h, k \in\,B_R(E)}\|\zeta^{h,k}_n-\zeta^{h,k}\|_{C^w_{p.T}(E)}=0.
\end{equation}
Moreover, the limit $\zeta^{h,k}$ solves the equation
\[\begin{array}{l}
\ds{d\zeta(t)=\le[A\zeta(t)+F^\prime(u^x(t))\zeta(t)+F^{\prime \prime}(u^x(t))(\eta^h(t),\eta^k(t))\r]\,dt}\\
\vs
\ds{+\le[G^\prime(u^x(t))\zeta(t)+G^{\prime \prime}(u^x(t))(\eta^h(t),\eta^k(t))\r]\,dw(t),\ \ \ \ \zeta(0)=0.}
\end{array}\]
In particular,
\begin{equation}
\label{bie21bis}
\|\zeta^{h,k}\|_{C^w_{p,T}(E)}\leq c_{p}(T)\le(1+|x|_E^{m-1}\r)|h|_E\,|k|_E.
\end{equation}

\end{Lemma}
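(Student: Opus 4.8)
The plan is to mimic the proof of Lemma \ref{bie28}, since Lemma \ref{bie19} has already supplied the $n$-uniform bound \eqref{bie21} and all the ingredients that made the earlier truncation argument work. First I would establish that $\{\zeta^{h,k}_n\}_{n\in\nat}$ is Cauchy in $C^w_{p,T}(E)$. As before, on the event $\{\sup_{t\in[0,T]}|u^x(t)|_E\leq n\}$ one has $F_{n+k}=F_n$, $F'_{n+k}=F'_n$ and $F''_{n+k}=F''_n$ along the relevant trajectory, so by uniqueness $u^x_{n+k}=u^x_n$, $\eta^h_{n+k}=\eta^h_n$ and hence $\zeta^{h,k}_{n+k}=\zeta^{h,k}_n$ there; thus $\zeta^{h,k}_{n+k}-\zeta^{h,k}_n$ is supported on $\{\sup_{t\in[0,T]}|u^x(t)|_E>n\}$. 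Applying Cauchy--Schwarz exactly as in \eqref{bie30}, together with the bound \eqref{bie21} for $\zeta^{h,k}_{2n+2k}$ and $\zeta^{h,k}_{2n}$ (at the exponent $2p$), the Chebyshev estimate $\Pro(\sup_{t\in[0,T]}|u^x(t)|_E>n)\leq c_p(T)(1+|x|_E^{2p})/n^{2p}$ coming from \eqref{bie}, gives
\[
\|\zeta^{h,k}_{n+k}-\zeta^{h,k}_n\|_{C^w_{p,T}(E)}^{2p}\leq \frac{c_p(T)}{n^{2p}}\le(1+|x|_E^{(m-1)2p}\r)|h|_E^{2p}|k|_E^{2p}\le(1+|x|_E^{2p}\r),
\]
which is summable in $n$ and yields a limit $\zeta^{h,k}\in C^w_{p,T}(E)$; the same estimate, as in the derivation of \eqref{bie29}, upgrades this to uniform convergence over $x,h,k\in B_R(E)$, giving \eqref{bie 29}.

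Next I would identify the limit. Write the mild formulation of \eqref{bie20},
\[
\zeta^{h,k}_n(t)=\int_0^t e^{(t-s)A}\le[F'_n(u^x_n(s))\zeta^{h,k}_n(s)+F''_n(u^x_n(s))(\eta^h_n(s),\eta^k_n(s))\r]ds+\int_0^t e^{(t-s)A}\le[G'(u^x_n(s))\zeta^{h,k}_n(s)+G''(u^x_n(s))(\eta^h_n(s),\eta^k_n(s))\r]dw(s),
\]
and pass to the limit term by term. For this I would combine \eqref{bie 29} with the convergences already available: \eqref{bie2} for $u^x_n\to u^x$, \eqref{bie29} for $\eta^h_n\to\eta^h$ and $\eta^k_n\to\eta^k$, the local uniform convergence \eqref{6.1} of $F_n,F'_n,F''_n$ to $F,F',F''$ on bounded sets of $E$, the continuity and $n$-independent bounds \eqref{sa3n} on $F'_n,F''_n$, and the boundedness of $G',G''$; the stochastic convolution terms are controlled using the estimate \eqref{bie4}-type inequality for $\Gamma$ in $C^w_{p,T}(E)$, exactly as in Lemma \ref{bie28}. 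A truncation on the event $\{\sup_{t\in[0,T]}|u^x(t)|_E\vee\sup_{t\in[0,T]}|u^x_n(t)|_E\leq K\}$ plus \eqref{bie} and \eqref{bien} (to kill the complementary event uniformly in $n$) handles the polynomially growing $F''$ term, just as \eqref{bie6} was handled in the proof of Lemma \ref{pr1}. This shows $\zeta^{h,k}$ is a mild solution of the stated linear SPDE.

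Finally, \eqref{bie21bis} follows by letting $n\to\infty$ in \eqref{bie21} for fixed $x,h,k$, using the strong convergence $\zeta^{h,k}_n\to\zeta^{h,k}$ in $C^w_{p,T}(E)$ and the lower semicontinuity of the norm (or simply the elementary inequality $\|\zeta^{h,k}\|_{C^w_{p,T}(E)}\leq \|\zeta^{h,k}_n\|_{C^w_{p,T}(E)}+\|\zeta^{h,k}_n-\zeta^{h,k}\|_{C^w_{p,T}(E)}$ with the second term tending to $0$ and the first bounded by the $n$-independent constant in \eqref{bie21}). I expect the main obstacle to be the passage to the limit in the inhomogeneous term $\int_0^t e^{(t-s)A}F''_n(u^x_n(s))(\eta^h_n(s),\eta^k_n(s))\,ds$: because $F''$ only has polynomial growth of order $(m-1)^+$, one cannot bound it uniformly on all of $E$, so the argument genuinely requires the truncation-plus-uniform-tail-estimate device, leaning on \eqref{bie}, \eqref{bien} and the $n$-uniform moment bounds \eqref{bie11}, \eqref{bie11bis} for $\eta^h_n$ and $\eta^k_n$; everything else is a routine adaptation of the already-proven Lemmas \ref{pr1} and \ref{bie28}.
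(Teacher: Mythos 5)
Your proposal is correct and follows essentially the same route as the paper, which proves Lemma \ref{bie31} simply ``by arguing as in the proof of Lemma \ref{bie28}'': the inclusion $\{\sup_{t}|u^x(t)|_E\leq n\}\subseteq\{\zeta^{h,k}_{n+j}=\zeta^{h,k}_n\}$, Cauchy--Schwarz with the $n$-uniform bound \eqref{bie21} at exponent $2p$ and Chebyshev via \eqref{bie}, passage to the limit in the mild formulation, and \eqref{bie21bis} by letting $n\to\infty$ in \eqref{bie21}. Your extra truncation device for the polynomially growing term $F''_n(u^x_n)(\eta^h_n,\eta^k_n)$ is a sensible filling-in of a detail the paper leaves implicit, and the only blemishes are cosmetic (the double use of $k$ as increment and direction, and the stray ``$\zeta^{h,k}_{2n}$'').
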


As a consequence of Lemmas \ref{pr1}, \ref{bie16}, \ref{bie28}, \ref{bie19} and \ref{bie31}, we have the following fact.

\begin{Theorem} 
\label{bie34}
Under Hypothesis \ref{H1}, the mapping
\[x \in\,E\mapsto u^x \in\,L^w_{p,T}(E)\]
is differentiable and the derivative $D_xu^u h$ along the direction $h \in\,E$ solves the problem
\begin{equation}
\label{bie32}
d\eta((t)=\le[A\eta(t)+F^\prime(u^x(t))\eta(t)\r]\,dt+G^\prime(u^x(t))\eta(t)\,dw(t),\ \ \ \eta(0)=h.
\end{equation}
\end{Theorem}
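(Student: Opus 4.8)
The plan is to assemble the statement from the preparatory lemmas proved in Sections \ref{subsec3.1} and \ref{subsec3.2}, using the approximation scheme built around the truncated nonlinearities $F_n$. First I would recall that, for each fixed $n$, Lemma \ref{bie16} already gives that the map $x\in E\mapsto u_n^x\in L^w_{p,T}(E)$ is differentiable, with $\eta_n^h:=D_xu_n^xh\in C^w_{p,T}(E)$ solving the linear equation \eqref{bie12} and satisfying the bound \eqref{bie11} with constants $M_p,\omega_p$ \emph{independent of $n$}. Then Lemma \ref{pr1} gives $u_n^x\to u^x$ in $C^w_{p,T}(E)$, locally uniformly in $x\in E$, and Lemma \ref{bie28} gives $\eta_n^h\to\eta^h$ in $C^w_{p,T}(E)$, uniformly for $x,h$ in balls of $E$, where the limit $\eta^h$ solves \eqref{bie15}, i.e. exactly \eqref{bie32}, and inherits the bound \eqref{bie11bis}.

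The core of the argument is a standard but slightly delicate passage to the limit in the difference quotient: for $x,h\in E$ and $\varepsilon\neq 0$ write
\[
\frac{u^{x+\varepsilon h}-u^x}{\varepsilon}-\eta^h
=\left(\frac{u^{x+\varepsilon h}-u^x}{\varepsilon}-\frac{u_n^{x+\varepsilon h}-u_n^x}{\varepsilon}\right)
+\left(\frac{u_n^{x+\varepsilon h}-u_n^x}{\varepsilon}-\eta_n^h\right)
+\left(\eta_n^h-\eta^h\right),
\]
and estimate the three terms in $L^w_{p,T}(E)$. The middle term tends to $0$ as $\varepsilon\to 0$ for each fixed $n$ by the differentiability in Lemma \ref{bie16}. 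The third term is made small, uniformly in $\varepsilon$ on a bounded set of the form $|x|_E,|h|_E\le R$, $|\varepsilon|\le 1$ (so that $x+\varepsilon h$ stays in a fixed ball $B_{2R}(E)$), by the uniform convergence \eqref{bie29} of Lemma \ref{bie28}. The first term is the point where one must be careful: I would control it by writing
\[
\frac{u_n^{x+\varepsilon h}-u_n^x}{\varepsilon}=\int_0^1 \eta_n^{h}(x+\theta\varepsilon h)\,d\theta,\qquad
\frac{u^{x+\varepsilon h}-u^x}{\varepsilon}=\int_0^1 \eta^{h}(x+\theta\varepsilon h)\,d\theta,
\]
(the second identity being justified once $\eta^h$ is known to be the derivative, or alternatively both obtained as the $n\to\infty$ limit of the first), so that the first term equals $\int_0^1(\eta_n^h-\eta^h)(x+\theta\varepsilon h)\,d\theta$, whose $C^w_{p,T}(E)$-norm is again bounded by $\sup_{y\in B_{2R}(E)}\|\eta_n^h-\eta^h\|_{C^w_{p,T}(E)}$ and hence small by \eqref{bie29}. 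Fixing $n$ large to absorb the first and third terms and then letting $\varepsilon\to 0$ to kill the middle term yields that the difference quotient converges to $\eta^h$ in $L^w_{p,T}(E)$; linearity and boundedness of $h\mapsto\eta^h$ (from \eqref{bie11bis}) upgrade this Gateaux derivative to a Fr\'echet derivative, and \eqref{bie15} is precisely \eqref{bie32}.

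The main obstacle I anticipate is making the interchange of $n\to\infty$ and $\varepsilon\to 0$ genuinely uniform: one needs the convergence $\eta_n^h\to\eta^h$ to be uniform over the whole segment $\{x+\theta\varepsilon h:\theta\in[0,1],\,|\varepsilon|\le 1\}$, not just at the endpoints, which is why invoking \eqref{bie29} in the form ``uniform on balls $B_R(E)$'' rather than pointwise in $x,h$ is essential. A secondary technical point is justifying the integral representation of the increment via $\eta_n^h$ — this is the fundamental theorem of calculus in the Banach space $L^w_{p,T}(E)$ applied to the $C^1$ map $x\mapsto u_n^x$ of Lemma \ref{bie16}, which is legitimate but should be stated. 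Once these are in place, the conclusion that $x\mapsto u^x$ is differentiable with derivative solving \eqref{bie32} follows, and one records the bound $\|\eta^h\|_{C^w_{p,T}(E)}\le M_p e^{\omega_p T}|h|_E$ inherited from \eqref{bie11bis} as well as the $H$- and $L^q$-estimates of Lemma \ref{Lbr24} for later use.
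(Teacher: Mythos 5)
Your proof is correct, but it follows a genuinely different route from the paper's. The paper does not work with difference quotients at all: it writes the exact second--order Taylor formula
\[
u^{x+h}_n-u^x_n=D_x u^x_n\, h+\int_0^1\!\!\int_0^1 D^2_x u^{x+\rho \theta h}_n(h,h)\,d\theta\,d\rho,
\]
passes to the limit $n\to\infty$ using \eqref{bie2}, \eqref{bie29} and the analogous uniform convergence of the second derivatives from Lemma \ref{bie31}, and then observes that the limiting remainder is bounded by $c_p(T)(1+|x|_E^{m-1})|h|_E^2$ thanks to \eqref{bie21bis}; the quadratic bound makes Fr\'echet differentiability immediate. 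Your argument instead uses only first--order information: the mean--value representation of the increment along the segment, the uniform--on--balls convergence $\eta^h_n\to\eta^h$ of Lemma \ref{bie28}, and a Moore--Osgood interchange of the limits $n\to\infty$ and $\varepsilon\to 0$. This is more economical --- it does not need Lemmas \ref{bie19} and \ref{bie31} at all --- at the price of a slightly more delicate bookkeeping of uniformities; the paper's route costs the second--derivative machinery but that machinery is developed anyway for later use, and it yields an explicit $O(|h|_E^2)$ remainder. One small imprecision in your write--up: ``linearity and boundedness of $h\mapsto\eta^h$'' do not by themselves upgrade a Gateaux derivative to a Fr\'echet derivative. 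What actually does the job is that \eqref{bie29} is uniform in $h\in B_R(E)$ as well as in $x$, so that in your three--term decomposition (applied to a general increment $k$ rather than $\varepsilon h$ with $h$ fixed) the first and third terms are $\le \delta_n\,|k|_E$ with $\delta_n\to 0$ independently of the direction of $k$, while the middle term is $o(|k|_E)$ for each fixed $n$ by the Fr\'echet differentiability of $u^x_n$; this gives $\limsup_{|k|_E\to0}|k|_E^{-1}\|u^{x+k}-u^x-\eta^k\|\le 2\delta_n$ for every $n$, hence Fr\'echet differentiability. Since you already invoke the uniformity of \eqref{bie29} in both variables, this is a matter of stating the conclusion correctly rather than a gap.
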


\begin{proof}
For any $n \in\,\nat$ and $x,h \in\,E$ we have
\[u^{x+h}_n-u^x_n=D_x u^x_n\, h+\int_0^1\int_0^1 D^2_x u^{x+\rho \theta h}_n(h,h)\,d\theta\,d\rho.\]
Then, due to \eqref{bie2}, \eqref{bie29} and \eqref{bie 29}, we can take the limit as $n\to \infty$ and we get
\[u^{x+h}-u^x=\eta^h+\int_0^1\int_0^1 \zeta^{h,h}\,d\theta\,d\rho.\]
The mapping
\[h \in\,E\mapsto \eta^h \in\,L^w_{p,T}(E),\]
is clearly linear and according to \eqref{bie11bis} is bounded. Moreover, according to  \eqref{bie21bis} 
we have
\[\le\|\int_0^1\int_0^1 \zeta^{h,h}\,d\theta\,d\rho\r\|_{L^w_{p,T}(E)}\leq c_{p}(T)\le(1+|x|_E^{m-1}\r)|h|_E^2,\]
and then we can conclude that $u^x$ is differentiable in $L^w_{p,T}(E)$ with respect to $x \in\,E$ and its derivative along the direction $h \in\,E$ solves problem \eqref{bie32}.

\end{proof}

In view of Lemma \ref{Lbr24} and Theorem \ref{bie34}, for any $T>0$, $p\geq 1$ and $x, y \in\,E$ we have
\begin{equation}
\label{br27}
\E\sup_{t \in\,[0,T]}|u^x(t)-u^y(t)|^p_H\leq c_{p}(T)|x-y|_H^p.
\end{equation}
Now, if $x \in\,H$ and $\{x_n\}_{n \in\,\nat}$ is any sequence in $E$, converging to $x$ in $H$, due to \eqref{br27} we have that  $\{u^{x_n}\}_{n \in\,\nat}$ is a Cauchy sequence in $C^w_{p,T}(H)$ and then there exists a limit $u^x \in\,C^w_{p,T}(H)$, only depending on $x$,  such that
\begin{equation}
\label{br28}
\|u^x\|_{C^w_{p,T}(H)}\leq c_{p}(T)\le(1+|x|_H\r).\end{equation}
Such a solution will be called {\em generalized solution}.

\begin{Theorem}
\label{Tbr29}
Under Hypothesis \ref{H1}, for any $x\in\,H$ equation \eqref{eqabstract} admits a unique generalized solution $u^x \in\,C^w_{p,T}(H)$, for any  $T>0$ and  $p\geq 1$. Moreover estimate \eqref{br28} holds.
\end{Theorem}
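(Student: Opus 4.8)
The plan is to establish existence and uniqueness of the generalized solution separately, relying on the a priori estimate \eqref{br27} which was already derived from Lemma \ref{Lbr24} and Theorem \ref{bie34}. For existence, I would take an arbitrary sequence $\{x_n\}_{n \in\,\nat}\subset E$ converging to $x$ in $H$ (such a sequence exists since $E$ is dense in $H$), and use \eqref{br27} to conclude that $\{u^{x_n}\}_{n \in\,\nat}$ is a Cauchy sequence in $C^w_{p,T}(H)$; the limit $u^x$ is the candidate generalized solution, and the estimate \eqref{br28} follows by passing to the limit in \eqref{bie} (or its consequence) combined with \eqref{br27}. One must check that the limit does not depend on the approximating sequence: given two sequences $\{x_n\}$ and $\{x_n'\}$ both converging to $x$ in $H$, interlace them into a single sequence still converging to $x$, whence the two limits coincide; this is exactly what makes the notion well defined, and it also shows the map $x\mapsto u^x$ extends continuously from $E$ to $H$ in the $C^w_{p,T}(H)$-norm.

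Next I would verify uniqueness of the generalized solution in $C^w_{p,T}(H)$. Here the natural route is: if $v$ is any process in $C^w_{p,T}(H)$ satisfying the defining property of a generalized solution (namely, $v$ is the $C^w_{p,T}(H)$-limit of $u^{y_n}$ for \emph{some} sequence $y_n\to x$ in $E$), then the estimate \eqref{br27} applied along that sequence forces $v=u^x$, since $u^{y_n}\to u^x$ in $C^w_{p,T}(H)$ by construction. In other words, the generalized solution is by definition the unique continuous extension, so uniqueness is essentially built into the definition once \eqref{br27} guarantees the extension is single-valued. If instead one wanted uniqueness among \emph{mild} solutions in $C^w_{p,T}(H)$ (an a priori stronger statement), one would note that for $x\in H$ the Nemytskii operator $F$ need not map $H$ into $H$ when $m\neq 1$, so the mild formulation itself requires care; I would therefore keep the statement at the level of the generalized-solution notion, for which the above argument suffices.

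The main obstacle, as flagged in the introduction, is precisely that the polynomial growth of $f$ prevents a direct fixed-point argument for $x\in H$: the term $\int_0^t e^{(t-s)A}F(u(s))\,ds$ is not controllable in $H$ because $F$ is unbounded on $H$. This is why the proof must go through approximation from $E$ rather than working intrinsically in $H$, and why the Lipschitz-in-$H$ estimate \eqref{br27} — itself a non-trivial consequence of the positivity-based derivative bounds of Lemma \ref{Lbr24} — is the crucial input. The remaining steps (extracting a Cauchy sequence, identifying the limit, transferring the a priori bound \eqref{bie} to \eqref{br28}) are routine once \eqref{br27} is in hand. I would also remark that the limit $u^x$ automatically inherits the Markov property and the semigroup structure from the $E$-valued solutions by the same continuity argument, though this is not needed for the statement itself.
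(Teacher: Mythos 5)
Your proposal is correct and follows essentially the same route as the paper: the paper's proof is precisely the paragraph preceding the statement, where \eqref{br27} (obtained from Lemma \ref{Lbr24} and Theorem \ref{bie34}) is used to show that $\{u^{x_n}\}$ is Cauchy in $C^w_{p,T}(H)$ for any sequence $x_n\to x$ in $H$ with $x_n\in E$, the limit being well defined (independent of the sequence) and satisfying \eqref{br28} via \eqref{br27} with $y=0$ together with \eqref{bie}. Your additional remarks on the interlacing argument and on why a direct fixed-point argument in $H$ fails are consistent with what the paper implicitly relies on.
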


\bigskip

\begin{Remark}
\label{R32}
{\em Concerning the second order differentiability of mapping \eqref{bie40}, we can adapt again the arguments used in \cite[Theorem 4.2.4]{tesi} to the present situation and thanks to \eqref{bie41} we have that mapping \eqref{bie40}  is twice differentiable with respect to $x \in\,H$ and the derivative along the directions $h, k \in\,H$ satisfies equation \eqref{bie20}. Moreover, 
\begin{equation}
\label{bie42}
\|D^2_x u^x_n(h,k)\|_{C^w_{p,T}(H)}\leq c_{p,n}(T)\le(1+|x|_H\r)|h|_H |k|_H.
\end{equation}}
\end{Remark}

\section{The transition semigroup}

We define the transition semigroup associated with equation \eqref{eqabstract}
as
\[P_t\varphi(x)=\E\,\varphi(u^x(t)),\ \ \ x \in\,E,\ \ \ t\geq 0,\]
for any $\varphi \in\,B_b(E)$. In view of Theorem \ref{bie34}, we have that 
\begin{equation}
\label{bie33}
\varphi \in\,C^1_b(E)\Longrightarrow P_t \varphi \in\,C^1_b(E),\ \ \ t\geq 0,
\end{equation}
 and there exist $M>0$ and $\omega \in\,\reals$ such that
\[
\|P_t\varphi\|_1\leq M e^{\omega t}\|\varphi\|_1, \ \ \ \ t\geq 0.
\]

We would like to stress that, in view of Theorem \ref{Tbr29}, the semigroup $P_t$ can be restricted to $C_b(H)$. Actually, for any $\varphi \in\,B_b(H)$ we can define
\[P^H_t\varphi(x)=\E\varphi(u^x(t)),\ \ \ t\geq 0,\ \ \ x \in\,H,\]
where $u^x(t)$ is the unique generalized solution of \eqref{eqabstract}  in $C^w_{p,T}(H)$ introduced in Theorem \ref{Tbr29}. Notice that if $x \in\,E$ and $\varphi \in\,B_b(H)$, then $P^H_t\varphi(x)=P_t\varphi(x).$

\bigskip

Our first purpose here is to prove that the semigroup $P_t$ has a smoothing effect in $B_b(E)$. Namely, we want to prove that $P_t$ maps $B_b(E)$ into $C^1_b(E)$, for any $t>0$.
To this purpose, we have to assume the following condition on the multiplication coefficient $g$ in front of the noise.

\begin{Hypothesis}
\label{H2}
We have
\begin{equation}
\inf_{(\xi,\rho) \in\,[0,1]\times \reals}|g(\xi,\rho)|=:\beta>0.
\end{equation}
\end{Hypothesis}

First of all, we introduce the transition semigroup $P^n_t$ associated with the approximating equation \eqref{eqn}, by setting
\[P^n_t\varphi(x)=\E\,\varphi(u_n^x(t)),\ \ \ x \in\,E,\ \ \ t\geq 0,\]
for any $\varphi \in\,B_b(E)$. It is important to stress that, according to Lemmas \ref{bie16} and \ref{bie19}, 
\begin{equation}
\label{bie33nbis}
\varphi \in\,C^k(E)\Longrightarrow P_t^n \varphi \in\,C^k_b(E),\ \ \ t\geq 0,\ \ \ k=0,1,2,
\end{equation}
 and 
\[
\|P^n_t\varphi\|_k\leq M\,e^{\omega t}\|\varphi\|_k, \ \ \ \ t\geq 0,\ \ k=0,1,2,
\]
for some constants $M>0$ and $\omega \in\,\reals$, which are independent of $n \in\,\nat$.

Notice that, as equation \eqref{eqn} is  solvable in $H$, we can also consider the restriction of $P_t^n$ to $B_b(H)$. In view of what we have seen in Remarks \ref{R31} and \ref{R32}, we have that 
\begin{equation}
\label{bie33H}
\varphi \in\,C^k_b(H)\Longrightarrow P^n_t \varphi \in\,C^k_b(H),\ \ \ t\geq 0,\ \ \ k=1,2,
\end{equation}
 and there exist constant $M_n >0$ and $\omega_n \in\,\reals$ such that
\begin{equation}
\label{bie34H}
\|P^n_t\varphi\|_k\leq M_n e^{\omega_n t}\|\varphi\|_k,\ \ \ \ t\geq 0,\ \ k=1,2.
\end{equation}
Now, due to Hypothesis \ref{H2}, for any $x, y \in\,H$ we can define
\[[G^{-1}(x)y](\xi)=\frac{y(\xi)}{g(\xi,x(\xi))},\ \ \ \ \xi \in\,[0,1].\]
It is immediate to check that for any $p \in\,[1,+\infty]$
\[G^{-1}:H\to {\cal L}(L^p(0,1),L^p(0,1))\]
and
\[G^{-1}(x)G(x)=G(x)G^{-1}(x),\ \ \ x \in\,H.\]
Therefore, we can adapt the proof of \cite[Proposition 4.4.3 and Theorem 4.4.5]{tesi} to the present situation and we can prove that $P^n_t$ has a smoothing effect. Namely, we have
\[\varphi \in\,B_b(H)\Longrightarrow P^n_t\varphi \in\,C^2_b(H),\ \ \ t>0,\]
and the Bismut-Elworthy-Li formula holds
\begin{equation}
\label{bie44}
\le<h,D(P^n_t\varphi)(x)\r>_H=\frac 1t\,\E\,\varphi(u^x_n(t))\int_0^t\le<G^{-1}(u^x_n(s))D_x u^x_n(s)h,dw(s)\r>_H,\ \ \ t>0,
\end{equation}
for any $\varphi \in\,C_b(H)$ and $x, h \in\,H$.

In view of all these results, by proceeding as in the proof of \cite[Theorem 6.5.1]{tesi}, due to what we have proved in Sections \ref{sec3}, \ref{subsec3.1} and \ref{subsec3.2}  we obtain the following fact.
\begin{Theorem}
\label{bie45}
Under Hypotheses \ref{H1} and \ref{H2}, we have 
\[\varphi \in\,B_b(E)\Longrightarrow P_t\varphi \in\,C^1_b(E),\ \ \ t>0,\]
and
\begin{equation}
\label{bie46bis}
\le<h,D(P_t\varphi)(x)\r>_E=\frac 1t\,\E\,\varphi(u^x(t))\int_0^t\le<G^{-1}(u^x(s))D_x u^x(s)h,dw(s)\r>_H,\ \ \ t>0.\end{equation}
In particular,  for any $\varphi \in\,B_b(E)$
\begin{equation}
\label{bie46}
\sup_{x \in\,E}|D(P_t\varphi)(x)|_{E^\star}\leq c\le(t\wedge 1\r)^{-\frac 12}\|\varphi\|_0,\ \ \ t>0.
\end{equation}
\end{Theorem}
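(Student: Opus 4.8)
The plan is to transfer the smoothing property and the Bismut-Elworthy-Li formula from the approximating semigroups $P^n_t$ in $H$ to the limiting semigroup $P_t$ in $E$, using the convergence results of Sections~\ref{sec3}--\ref{subsec3.2} together with the duality/approximation Lemma~\ref{Lbr21}. First I would fix $\varphi\in C_b(E)$, $t>0$, and $x,h\in E$, and start from the Bismut-Elworthy-Li formula \eqref{bie44} written for the restriction $P^n_t$ to $C_b(H)$; note that by Lemma~\ref{Lbr21} one can approximate $\varphi$ by a sequence $\varphi_k\in C_b(H)$ with $\varphi_k\to\varphi$ pointwise on $E$ and $\|\varphi_k\|_0\le\|\varphi\|_0$, so \eqref{bie44} holds with $\varphi$ replaced by $\varphi_k$; then by dominated convergence (using \eqref{bien}, \eqref{bie11} and the boundedness of $G^{-1}$ on $H$) one passes to the limit in $k$ to obtain \eqref{bie44} for $\varphi\in C_b(E)$, with $x,h\in E$. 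The right-hand side of the resulting identity makes sense for $\varphi\in B_b(E)$ as well, by the usual monotone-class/density argument, and this already gives the candidate formula for $D(P^n_t\varphi)(x)$ evaluated in the direction $h$.

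Next I would estimate the right-hand side of \eqref{bie44}. By the Cauchy-Schwarz inequality in $L^2(\Omega)$ and the It\^o isometry,
\[
\le|\le<h,D(P^n_t\varphi)(x)\r>_H\r|\le\frac1t\|\varphi\|_0\,\le(\E\int_0^t\le|G^{-1}(u^x_n(s))D_xu^x_n(s)h\r|_H^2\,ds\r)^{1/2}.
\]
Using Hypothesis~\ref{H2}, $\|G^{-1}(u^x_n(s))\|_{{\cal L}(H)}\le\beta^{-1}$, so the integrand is bounded by $\beta^{-2}\,\E|D_xu^x_n(s)h|_H^2$. Here is the crucial point: one should not use the $E$-bound \eqref{bie11} (which grows like $e^{\omega_p T}|h|_E$ and would only give a $t$-independent estimate), but rather the $H$-bound \eqref{br25} for $\eta^h_n=D_xu^x_n h$, namely $\E|D_xu^x_n(s)h|_H^2\le c|h|_H^2\le c|h|_E^2$, uniformly in $n$ and in $s\in[0,1]$. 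This yields $|\le<h,D(P^n_t\varphi)(x)\r>_H|\le c\,t^{-1/2}\|\varphi\|_0|h|_E$ for $t\le1$, and combined with the contraction property for $t\ge1$ (write $P^n_t=P^n_{t-1}P^n_1$ and use $\|P^n_{t-1}\varphi\|_0\le\|\varphi\|_0$), one gets $\sup_{x\in E}|D(P^n_t\varphi)(x)|_{E^\star}\le c(t\wedge1)^{-1/2}\|\varphi\|_0$, with $c$ independent of $n$. Then one passes to the limit $n\to\infty$: by Lemma~\ref{pr1} $P^n_t\varphi(x)\to P_t\varphi(x)$ for $\varphi\in C_b(E)$, and by the uniform gradient bound the family $\{P^n_t\varphi\}$ is equi-Lipschitz on bounded sets of $E$, so the limit $P_t\varphi$ is Lipschitz and, using \eqref{bie2}, \eqref{bie29} to pass to the limit in the stochastic integral on the right-hand side of \eqref{bie44}, one identifies $D(P_t\varphi)$ and obtains \eqref{bie46bis} and \eqref{bie46} first for $\varphi\in C_b(E)$.

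Finally, to go from $C_b(E)$ to $B_b(E)$ I would use a standard bootstrap: for $\varphi\in B_b(E)$ and $t>0$ write $P_t\varphi=P_{t/2}(P_{t/2}\varphi)$; since $P_{t/2}\varphi\in C_b(E)$ by the strong Feller-type property one should establish along the way (or directly: $P_{t/2}\varphi$ is continuous because $u^x(t/2)$ depends continuously in law on $x$, which follows from \eqref{br27} and the Feller property), the already-proved case applies to $P_{t/2}(P_{t/2}\varphi)$ and gives $P_t\varphi\in C^1_b(E)$ together with \eqref{bie46bis}, \eqref{bie46} with constants adjusted by the semigroup bound. The main obstacle I anticipate is the justification of the limit passages — both in $k$ (from $C_b(H)$ to $C_b(E)$ inside \eqref{bie44}) and in $n$ — since these require uniform-in-$n$ integrability of the stochastic integrands, which is exactly why the uniform $H$-estimate \eqref{br25} for the first derivative (rather than the $E$-estimate) is indispensable, and why Lemma~\ref{Lbr21} is needed to handle the test function $\varphi$ living only on $E$. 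Everything else is bookkeeping of constants and routine dominated-convergence arguments.
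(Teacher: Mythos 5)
Your proposal is correct and follows essentially the route the paper itself takes (the paper compresses the argument into an adaptation of Theorem 6.5.1 of \cite{tesi} combined with the results of Sections \ref{sec3}--\ref{subsec3.2}): start from the Bismut--Elworthy--Li formula \eqref{bie44} for the approximating semigroups $P^n_t$ in $H$, transfer it to test functions living only on $E$ via Lemma \ref{Lbr21}, estimate the stochastic integral through the uniform-in-$n$ $H$-bound of type \eqref{br25} on the first derivative --- which is indeed the step that produces $(t\wedge 1)^{-1/2}|h|_H\leq (t\wedge 1)^{-1/2}|h|_E$ and is exactly how the paper argues later in Theorem \ref{T4.2} --- and then pass to the limit in $n$ using Lemmas \ref{pr1} and \ref{bie28} before extending from $C_b(E)$ to $B_b(E)$. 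One small caveat: your parenthetical justification that $P_{t/2}\varphi$ is continuous for $\varphi\in B_b(E)$ ``because $u^x(t/2)$ depends continuously in law on $x$'' is not sufficient (continuity in law only yields the Feller property on $C_b(E)$); the continuity must instead come from the uniform Lipschitz estimate $|P_s\varphi(x)-P_s\varphi(y)|\leq c\,(s\wedge 1)^{-1/2}\|\varphi\|_0\,|x-y|_E$, proved first for $\varphi\in C_b(E)$ and then extended to all of $B_b(E)$ by a functional monotone-class argument, which is precisely the ``strong Feller-type property established along the way'' that your main line already invokes, so the proof stands once that alternative shortcut is discarded.
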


Theorem \ref{bie45} says that  if $\varphi \in\,B_b(E)$, then $P_t\varphi \in\,C^1_b(E)$, for any $t>0$. If we could prove that in fact $P_t\varphi \in\,C^1_b(H)$, then we would have
\[\sum_{i=1}^\infty\le|\le<G(x)e_i,D(P_t\varphi)(x)\r>_H\r|^2=|G^\star(x)D(P_t\varphi)(x)|_H^2<\infty.\]
But in general we have only $P_t\varphi \in\,C^1_b(E)$ and it is not clear in principle whether the sum
\[\sum_{i=1}^\infty\le|\le<G(x)e_i,D(P_t\varphi)(x)\r>_E\r|^2\]
is convergent or not. Next theorem provides a positive answer to this question, which will be of crucial importance for the statement and  the proof of the {\em egalit\'e du carr\'e des champs} and for its application to the Poincar\'e inequality.

\begin{Theorem}
\label{T4.2}
Let $\{e_i\}_{i \in\,\nat}$ be the complete orthonormal basis of $H$ defined in \eqref{basis}.
Then, under Hypotheses \ref{H1} and \ref{H2}, for any $\varphi \in\,C_b(E)$ and $x \in\,E$ we have
\begin{equation}
\label{bie1020}
\sum_{i=1}^\infty\le|\le<G(x)e_i,D(P_t\varphi)(x)\r>_E\r|^2\leq c\,|G(x)|_E^2\|\varphi\|_0^2\le(t\wedge 1\r)^{-1},\ \ \ \ t>0.
\end{equation}
Moreover, if $\varphi \in\,C_b^1(E)$, for any $x \in\,E$ we have
\begin{equation}
\label{bie1020bis}
\sum_{i=1}^\infty\le|\le<G(x)e_i,D(P_t\varphi)(x)\r>_E\r|^2\leq c(t)\,P_t\le(|D\varphi(\cdot)|_{E^\star}^2\r)\!(x)\,|G(x)|_E^2\,t^{-\frac 12},\ \ \ \ t>0,
\end{equation}
for some continuous  increasing function. If we also assume that the constant $\la$ in \eqref{h12} is strictly negative, then there exists $\d>0$ such that
\begin{equation}
\label{bie1020tris}
\sum_{i=1}^\infty\le|\le<G(x)e_i,D(P_t\varphi)(x)\r>_E\r|^2\leq c\,e^{-\d t}\,P_t\le(|D\varphi(\cdot)|_{E^\star}^2\r)\!(x)\,|G(x)|_E^2\,t^{-\frac 12},\ \ \ \ t>0.
\end{equation}
 
\end{Theorem}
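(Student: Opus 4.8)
The three bounds will all be obtained from the representations of $D(P_t\varphi)$ given by Theorems \ref{bie34} and \ref{bie45}, together with the estimates on the first variation $\eta^h=D_xu^xh$ collected in Lemma \ref{Lbr24}, by dualizing the $\ell^2$-sum over $\{e_i\}$. The decisive elementary remark is that, for every $N\in\nat$ and every $b=(b_1,\dots,b_N)\in\reals^N$ with $\sum_{i=1}^Nb_i^2=1$, the function $h_b:=\sum_{i=1}^Nb_ie_i$ lies in $E$ (a finite linear combination of the $e_i\in E$) and satisfies $|h_b|_H=1$, so that $G(x)h_b\in E$ and, by linearity of $h\mapsto\eta^h$,
\[
\sum_{i=1}^N b_i\,\eta^{G(x)e_i}(s)=\eta^{G(x)h_b}(s),\qquad s\ge0.
\]
Setting $a_i:=\langle G(x)e_i,D(P_t\varphi)(x)\rangle_E$ and using the identity $\sum_{i=1}^N|a_i|^2=\sup_{|b|_{\ell^2}=1}\big(\sum_{i=1}^N b_i a_i\big)^2$, it then suffices to bound $\sum_{i=1}^N b_ia_i$ uniformly in $N$ and in $b$, and to let $N\to\infty$.

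To prove \eqref{bie1020}, where only $\varphi\in C_b(E)$ is given, I would start from the Bismut--Elworthy--Li formula \eqref{bie46bis}: taking there $h=G(x)h_b\in E$ and using the above linearity identity,
\[
\sum_{i=1}^N b_ia_i=\frac1t\,\E\!\left[\varphi(u^x(t))\int_0^t\langle G^{-1}(u^x(s))\,\eta^{G(x)h_b}(s),dw(s)\rangle_H\right].
\]
By the Cauchy--Schwarz inequality, the It\^o isometry, the bound $\|G^{-1}(y)\|_{\L(H)}\le\beta^{-1}$ coming from Hypothesis \ref{H2}, the inequality $|G(x)h_b|_H\le|G(x)|_E|h_b|_H=|G(x)|_E$, and estimate \eqref{br25},
\[
\Big|\sum_{i=1}^N b_ia_i\Big|^2\le\frac{\beta^{-2}}{t^2}\,\|\varphi\|_0^2\int_0^t\E|\eta^{G(x)h_b}(s)|_H^2\,ds\le\frac{c}{t^2}\,\|\varphi\|_0^2\,|G(x)|_E^2\int_0^t c_2(s)^2\,ds,
\]
where $c_2(\cdot)$ denotes the constant in \eqref{br25} for $p=2$. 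For $t\le1$ the last integral is bounded by $c\,t$ (the function $c_2(\cdot)$ being bounded on $[0,1]$), which produces the factor $(t\wedge1)^{-1}=t^{-1}$; for $t>1$ one writes $P_t\varphi=P_1(P_{t-1}\varphi)$ with $\|P_{t-1}\varphi\|_0\le\|\varphi\|_0$ and applies the already established case $t=1$. Taking the supremum over $b$ and letting $N\to\infty$ yields \eqref{bie1020}.

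For \eqref{bie1020bis} and \eqref{bie1020tris}, where $\varphi\in C_b^1(E)$, I would instead use the chain rule representation $\langle h,D(P_t\varphi)(x)\rangle_E=\E\langle\eta^h(t),D\varphi(u^x(t))\rangle_E$, which follows from Theorem \ref{bie34}. Then $a_i=\E\langle\eta^{G(x)e_i}(t),D\varphi(u^x(t))\rangle_E$, and by the linearity identity and the Cauchy--Schwarz inequality on $\Omega$,
\[
\Big|\sum_{i=1}^N b_ia_i\Big|^2=\big(\E\langle\eta^{G(x)h_b}(t),D\varphi(u^x(t))\rangle_E\big)^2\le\big(\E|\eta^{G(x)h_b}(t)|_E^2\big)\,\E|D\varphi(u^x(t))|_{E^\star}^2,
\]
and the second factor is exactly $P_t(|D\varphi(\cdot)|_{E^\star}^2)(x)$. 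For the first factor I would apply \eqref{br30} with $p=2$ and $q=\infty$, which is admissible since then $p(q-2)/4q=\frac12<1$ and $|\eta^{G(x)h_b}(t)|_E=|\eta^{G(x)h_b}(t)|_{L^\infty(0,1)}$; this gives $\E|\eta^{G(x)h_b}(t)|_E^2\le c(t)\,t^{-1/2}|G(x)h_b|_H^2\le c(t)\,t^{-1/2}|G(x)|_E^2$, and \eqref{bie1020bis} follows by taking the supremum over $b$ and letting $N\to\infty$. When in addition $\la<0$, one replaces \eqref{br30} by \eqref{br31}, obtaining $\E|\eta^{G(x)h_b}(t)|_E^2\le c\,e^{-\d t}(t\wedge1)^{-1/2}|G(x)|_E^2$; since $e^{-\d t}(t\wedge1)^{-1/2}\le c\,e^{-\d t/2}t^{-1/2}$ for all $t>0$, this gives \eqref{bie1020tris}.

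The routine points to be checked are the integrability needed to apply the It\^o isometry (guaranteed by \eqref{br25}), the adaptedness of $s\mapsto G^{-1}(u^x(s))\eta^{G(x)h_b}(s)$, and the justification of the chain rule for $x\mapsto\E\varphi(u^x(t))$ when $\varphi\in C_b^1(E)$. The one genuinely delicate conceptual point --- already resolved by the linearity identity of the first paragraph --- is that the family $\{G(x)e_i\}$ is not orthonormal in $H$, so the sum cannot be estimated term by term: one must replace it by the single $H$-unit vector $G(x)h_b$, which crucially still belongs to $E$, so that the $H$- and $E$-bounds of Lemma \ref{Lbr24} (which control $\eta^h$ in terms of $|h|_H$) become applicable. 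Apart from this, the only step requiring care is the precise book-keeping of the powers of $t$ in \eqref{bie1020}, where the improvement from $t^{-2}$ to $(t\wedge1)^{-1}$ rests on $\int_0^t c_2(s)^2\,ds\le c\,t$ for small $t$ together with the semigroup property for large $t$.
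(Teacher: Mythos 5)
Your proof is correct and follows essentially the same route as the paper: the Bismut--Elworthy--Li formula combined with the $H$-estimate \eqref{br25} for \eqref{bie1020}, and the chain rule plus Cauchy--Schwarz with \eqref{br30} (resp.\ \eqref{br31}) for \eqref{bie1020bis} (resp.\ \eqref{bie1020tris}), all resting on the single bound $|\langle G(x)h,D(P_t\varphi)(x)\rangle_E|\le C\,|G(x)|_E\,|h|_H$. The only cosmetic difference is in how the sum of squares is extracted from that bound: the paper invokes a Riesz representative $\Lambda_\varphi(t,x)\in H$ of the functional $h\mapsto\langle G(x)h,D(P_t\varphi)(x)\rangle_E$ and applies Parseval, whereas you use the equivalent finite-dimensional $\ell^2$-duality $\sum_{i\le N}|a_i|^2=\sup_{|b|_{\ell^2}=1}(\sum_{i\le N}b_ia_i)^2$ and let $N\to\infty$.
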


\begin{proof}
Assume $\varphi \in\,C_b(E)$ and $x, h \in\,E$. According to \eqref{br25} and \eqref{bie46bis}, for any $t \in\,(0,1]$ we have
\[\begin{array}{l}
\ds{\le|\le<h,D(P_t\varphi)(x)\r>_E\r|=\frac 1t\le|\E\,\varphi(u^x(t))\int_0^t\le<G^{-1}(u^x(s))D_x u^x(s)h,dw(s)\r>_H\r|}\\
\vs
\ds{\leq \frac{\|\varphi\|_0}t\le(\int_0^t\E\,|G^{-1}(u^x(s))D_x u^x(s)h|_H^2\,ds\r)^{\frac 12}\leq  \frac{c\,\|\varphi\|_0}t\le(\int_0^t c(s)\,ds\r)^{\frac 12}|h|_H\leq c\,\|\varphi\|_0\,t^{-\frac 12}|h|_H.}
\end{array}\]
Due to the semigroup law, it follows that for any $t>0$
\begin{equation}
\label{br34poi}
\le|\le<G(x)h,D(P_t\varphi)(x)\r>_E\r|\leq c\,\|\varphi\|_0\,(t\wedge 1)^{-\frac 12}|G(x)|_E\,|h|_H.
\end{equation}
This implies in particular that for any $t>0$ and $x \in\,E$ there exists $\Lambda_\varphi(t,x) \in\,H$ such that
\[\le<G(x)h,D(P_t\varphi)(x)\r>_E=\le<\Lambda_\varphi(t,x),h\r>_H,\ \ \ h \in\,E.\]
Therefore, in view of \eqref{br34poi}
\[\begin{array}{l}
\ds{\sum_{i=1}^\infty\le|\le<G(x)e_i,D(P_t\varphi)(x)\r>_E\r|^2=\sum_{i=1}^\infty\le|\le<\Lambda_\varphi(t,x),
e_i\r>_H\r|^2}\\
\vs
\ds{=|\Lambda_\varphi(t,x)|_H^2\leq c\,\|\varphi\|^2_0\,(t\wedge 1)^{-1}|G(x)|^2_E,}
\end{array}\]
and \eqref{bie1020} holds.

Next, in order to prove \eqref{bie1020bis}, we notice that if $\varphi \in\,C^1_b(E)$, then
\[\le<G(x)h,D(P_t\varphi)(x)\r>_E=\E\le<D u^x(t)G(x)h,D\varphi(u^x(t))\r>_E.\]
According to \eqref{br30}, with $p=2$ and $q=+\infty$, for any $t>0$ we have
\[\begin{array}{l}
\ds{\le|\le<G(x)h,D(P_t\varphi)(x)\r>_E\r|^2\leq \E\,|D\varphi(u^x(t))|^2_{E^\star}\E\,|D u^x(t)G(x)h|_E^2}\\
\vs
\ds{
\leq P_t(|D\varphi(\cdot)|_{E^\star}^2)(x)c_{2,\infty}(t) t^{-\frac 12}|G(x)|_E^2|h|_H^2.}
\end{array}\]
As above, this implies that for any $t>0$ and $x \in\,E$ there exists $\hat{\Lambda}_\varphi(t,x) \in\,H$
such that
\[\le<G(x)h,D(P_t\varphi)(x)\r>_E=\le<\hat{\Lambda}_\varphi(t,x),h\r>_H\] and as above we can conclude that
\eqref{bie1020bis} holds.

Finally, in order to get \eqref{bie1020tris}, we have to proceed exactly in the same way, by using \eqref{br31} instead of  \eqref{br30}.

\end{proof}

 \section{Kolmogorov operator}

We  define the Komogorov operator $\mathcal K$ in $C_b(E)$ associated with $P_t$, by proceeding as in \cite{cerrai94} and \cite{tesi}. The operator $\mathcal K$  is defined through its resolvent by
\begin{equation}
\label{e5.1}
(\lambda-\mathcal K)^{-1}\varphi(x)=\int_0^{+\infty}e^{-\la t} P_t\varphi(x)\,dt,\ \ \ x \in\,E,
\end{equation}
for all $\lambda>0$ and $\varphi\in C_b(E)$,  see also \cite{Priola}.

We notice that, by Theorem \ref{bie45}, we have 
\begin{equation}
\label{e5.2}
D(\mathcal K)\subset C^1_b(E),
\end{equation} 
where $D(\mathcal K)$  is the domain of $\mathcal K$. In fact, this stronger property holds.
\begin{Theorem}
\label{bie2033}
Let $\{e_i\}_{i \in\,\nat}$ be the complete orthonormal basis of $H$ defined in \eqref{basis}.
Then, under Hypotheses \ref{H1} and \ref{H2}, for any $\varphi \in\,D({\cal K})$ and $x \in\,E$ we have
\begin{equation}
\label{bie10200}
\sum_{i=1}^\infty\le|\le<G(x)e_i,D\varphi(x)\r>_E\r|^2\leq c\,|G(x)|_E^2\le(\|\varphi\|_0^2+\|{\cal K}\varphi\|_0^2\r).
\end{equation}

\end{Theorem}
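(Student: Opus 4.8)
The plan is to deduce \eqref{bie10200} from the bound \eqref{bie1020} of Theorem \ref{T4.2} by representing $\varphi \in\,D(\mathcal K)$ through its resolvent. Fix $\lambda>0$ and set $\psi=(\lambda-\mathcal K)\varphi \in\,C_b(E)$, so that by \eqref{e5.1}
\[
\varphi(x)=\int_0^{+\infty}e^{-\lambda t}P_t\psi(x)\,dt,\ \ \ x \in\,E.
\]
Since $D(\mathcal K)\subset C^1_b(E)$ by \eqref{e5.2} and each $P_t\psi \in\,C^1_b(E)$ for $t>0$ by Theorem \ref{bie45}, the first step is to differentiate under the integral sign, which is justified by the gradient bound \eqref{bie46}: the integrand $e^{-\lambda t}D(P_t\psi)(x)$ is bounded in $E^\star$ by $c\,e^{-\lambda t}(t\wedge 1)^{-1/2}\|\psi\|_0$, which is integrable on $(0,+\infty)$. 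Hence
\[
\langle G(x)e_i,D\varphi(x)\rangle_E=\int_0^{+\infty}e^{-\lambda t}\langle G(x)e_i,D(P_t\psi)(x)\rangle_E\,dt.
\]

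Next I would apply the Minkowski integral inequality in $\ell^2$ (in the index $i$) to the right-hand side, obtaining
\[
\le(\sum_{i=1}^\infty\le|\langle G(x)e_i,D\varphi(x)\rangle_E\r|^2\r)^{\frac 12}
\leq \int_0^{+\infty}e^{-\lambda t}\le(\sum_{i=1}^\infty\le|\langle G(x)e_i,D(P_t\psi)(x)\rangle_E\r|^2\r)^{\frac 12}\,dt,
\]
and then insert \eqref{bie1020}, which bounds the inner square root by $c\,|G(x)|_E\,\|\psi\|_0\,(t\wedge 1)^{-1/2}$. Computing $\int_0^{+\infty}e^{-\lambda t}(t\wedge 1)^{-1/2}\,dt<\infty$ yields a constant $c_\lambda$ and gives
\[
\sum_{i=1}^\infty\le|\langle G(x)e_i,D\varphi(x)\rangle_E\r|^2\leq c_\lambda\,|G(x)|_E^2\,\|\psi\|_0^2=c_\lambda\,|G(x)|_E^2\,\|(\lambda-\mathcal K)\varphi\|_0^2.
\]
Since $\|(\lambda-\mathcal K)\varphi\|_0\leq \lambda\|\varphi\|_0+\|\mathcal K\varphi\|_0$, choosing $\lambda=1$ (or any fixed value) and absorbing constants produces \eqref{bie10200}.

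The one point that needs care — and which I expect to be the main technical obstacle — is the differentiation under the integral sign and, relatedly, the interchange of the sum over $i$ with the integral over $t$. Both are controlled by the uniform gradient estimate \eqref{bie46} together with \eqref{bie1020}, but one should argue cleanly: for instance, first establish the identity for the finite partial sums $\sum_{i=1}^N$ (where dominated convergence applies directly to each scalar integral $\langle G(x)e_i,D\varphi(x)\rangle_E=\int_0^\infty e^{-\lambda t}\langle G(x)e_i,D(P_t\psi)(x)\rangle_E\,dt$, using \eqref{bie46} as the dominating function), apply Minkowski to the finite sum, bound by \eqref{bie1020}, and finally let $N\to\infty$ using monotone convergence on the left. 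This circumvents any delicate Fubini-type justification in an infinite-dimensional setting and makes the whole argument elementary once Theorem \ref{T4.2} is in hand.
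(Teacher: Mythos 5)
Your proposal is correct and follows essentially the same route as the paper: represent $\varphi$ via the resolvent formula $\varphi=\int_0^\infty e^{-\lambda t}P_t\psi\,dt$ with $\psi=(\lambda-\mathcal K)\varphi$, differentiate under the integral, and feed in the key estimate \eqref{bie1020}. The only cosmetic difference is that you pass the $\ell^2$-sum through the $t$-integral via Minkowski's integral inequality, whereas the paper uses a weighted Cauchy--Schwarz inequality with weights $(t\wedge 1)^{\pm(1-\e)}$; both work because $(t\wedge1)^{-1/2}$ is integrable against $e^{-t}$, and your explicit treatment of the finite partial sums and of the differentiation under the integral sign is a sound (indeed slightly more careful) justification of steps the paper leaves implicit.
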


\begin{proof}
Due to the H\"older inequality, for any $\e \in\,(0,1)$ and $\psi \in\,C_b(E)$ we have
\[\begin{array}{l}
\ds{\le|\le<G(x)e_i,D((1-{\cal K})^{-1}\psi)(x)\r>_E\r|^2\leq \int_0^\infty e^{-t}(t\wedge 1)^{-(1-\e)}\,dt}\\
\vs
\ds{\times \int_0^\infty e^{- t}(t\wedge 1)^{1-\e}\le|\le<G(x)e_i,D(P_t\psi)(x)\r>_E\r|^2\,dt}
\end{array}\]
and then, according to \eqref{bie1020}, we get
\[\begin{array}{l}
\ds{\sum_{i=1}^\infty \le|\le<G(x)e_i,D((1-{\cal K})^{-1}\psi)(x)\r>_E\r|^2\leq c_\e
\int_0^\infty e^{- t}(t\wedge 1)^{-\e}\,dt\,\,|G(x)|_E^2\|\psi\|_0^2.}
\end{array}\]
Therefore, if we take $\psi=(1-{\cal K})\varphi$,  we get \eqref{bie10200}.
\end{proof}

Our  goal   is to prove the following result
\begin{Theorem}
\label{t5.1}
Assume Hypotheses \ref{H1} and \ref{H2}. Then,
for any $\varphi\in D(\mathcal K)$ we have $\varphi^2\in D(\mathcal K)$ and the following identity holds
 \begin{equation}
 \label{e5.3}
 \mathcal K\varphi^2=2\varphi\, \mathcal K\varphi+\sum_{i=1}^\infty |\le< G(\cdot)e_i,D\varphi\r>_E|^2.
 \end{equation} 
\end{Theorem}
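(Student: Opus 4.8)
The plan is to prove \eqref{e5.3} first for the approximating semigroups $P^n_t$ in the Hilbert space $H$, where the classical It\^o-based machinery applies, and then pass to the limit. More precisely, I would proceed in three stages.

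\textbf{Step 1: the identit\'e du carr\'e des champs for $P^n_t$ on $H$.} Fix $n \in\,\nat$ and recall that by \eqref{bie33H}–\eqref{bie34H} the semigroup $P^n_t$ maps $C^k_b(H)$ into $C^k_b(H)$, $k=1,2$, with exponential bounds, and that the approximating equation \eqref{eqn} has Lipschitz coefficients, so the generator ${\cal K}_n$ of $P^n_t$ on $C_b(H)$ is well defined and has a core on which it acts as the differential operator $\frac12\,\mathrm{Tr}\,[G(x)G^\star(x)D^2\psi]+\langle Ax+F_n(x),D\psi\rangle$. For sufficiently regular $\psi$ (e.g.\ $\psi=(\lambda-{\cal K}_n)^{-1}\chi$ with $\chi\in C^2_b(H)$, which by the smoothing estimates lies in $C^2_b(H)$) one applies the It\^o formula to $s\mapsto \psi^2(u^x_n(s))$, exactly as in \cite[Chapter 7]{tesi} or \cite{DDG}, to obtain
\[
{\cal K}_n(\psi^2)=2\psi\,{\cal K}_n\psi+|G^\star(\cdot)D\psi|_H^2
=2\psi\,{\cal K}_n\psi+\sum_{i=1}^\infty|\langle G(\cdot)e_i,D\psi\rangle_H|^2 .
\]
The point of writing the last sum in terms of $\langle G(x)e_i,D\psi\rangle_H$ rather than $|G^\star(x)D\psi|_H^2$ is that this form survives the passage to $E$: for $\psi\in C^1_b(H)$ and $x\in E$ one has $\langle G(x)e_i,D\psi(x)\rangle_H=\langle G(x)e_i,D\psi(x)\rangle_E$ when $D\psi(x)$ extends to an element of $E^\star$, and the approximation Lemma \ref{Lbr21} lets us compare $C^1_b(E)$ and $C^1_b(H)$ quantities.

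\textbf{Step 2: transfer to $P_t$ on $E$ via the approximations.} Given $\varphi\in D({\cal K})$ on $E$, write $\varphi=(1-{\cal K})^{-1}\chi$ with $\chi=(1-{\cal K})\varphi\in C_b(E)$ and set $\varphi_n^{(\cdot)}$ using the resolvents $(1-{\cal K}_n)^{-1}$ applied to a regularization of $\chi$; alternatively, and more cleanly, apply the It\^o formula directly for equation \eqref{eqabstract} in the Banach space $E$ using the ``Banach-space It\^o formula'' together with the Ornstein–Uhlenbeck approximation mentioned in the introduction (point (iii)). Concretely I would mollify $\varphi$ by composing with the OU semigroup $R_\e$ on $E$, so $\varphi_\e=R_\e\varphi$ is smooth enough to apply It\^o to $\varphi_\e^2(u^x(t))$; this yields
\[
\E\,\varphi_\e^2(u^x(t))-\varphi_\e^2(x)=\E\int_0^t\Big[2\varphi_\e{\cal K}\varphi_\e+\sum_{i=1}^\infty|\langle G(u^x(s))e_i,D\varphi_\e(u^x(s))\rangle_E|^2\Big]ds .
\]
Here one uses Lemmas \ref{bie16}, \ref{bie19}, \ref{bie28}, \ref{bie31} to justify differentiating under the expectation, the bound \eqref{bie21bis} to control the second-derivative term arising from the quadratic variation, and Theorem \ref{T4.2} (estimate \eqref{bie1020}, respectively \eqref{bie1020bis}) to see that the series is finite and converges uniformly in $s$, so that the dominated convergence theorem applies as $\e\to0$. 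Using \eqref{e5.1}, i.e.\ dividing by $t$, letting $t\to 0$, and recalling that by Theorem \ref{bie2033} the series $\sum_i|\langle G(x)e_i,D\varphi(x)\rangle_E|^2$ is bounded by $c|G(x)|_E^2(\|\varphi\|_0^2+\|{\cal K}\varphi\|_0^2)$ — hence defines an element of $C_b(E)$ once one checks continuity in $x$ — one identifies the right-hand side of the integral equation with $2\varphi\,{\cal K}\varphi+\sum_i|\langle G(\cdot)e_i,D\varphi\rangle_E|^2$. Finally, rewriting the displayed identity as $\E\,\varphi^2(u^x(t))-\varphi^2(x)=\E\int_0^t \Psi(u^x(s))\,ds$ with $\Psi\in C_b(E)$, and taking Laplace transforms, shows $\varphi^2\in D({\cal K})$ with ${\cal K}\varphi^2=\Psi$, which is exactly \eqref{e5.3}.

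\textbf{Step 3: continuity of the carr\'e-des-champs term and closure.} A technical but necessary point is that $x\mapsto \Xi(x):=\sum_{i=1}^\infty|\langle G(x)e_i,D\varphi(x)\rangle_E|^2$ is continuous and bounded on $E$, so that the integrand $2\varphi\,{\cal K}\varphi+\Xi$ genuinely lies in $C_b(E)$ and the Laplace-transform characterization \eqref{e5.1} of ${\cal K}$ can be invoked. Boundedness is Theorem \ref{bie2033}; for continuity one uses that the partial sums $\sum_{i\le N}|\langle G(x)e_i,D\varphi(x)\rangle_E|^2$ are continuous (since $G$ is continuous and $\varphi\in C^1_b(E)$ by \eqref{e5.2}) together with the tail estimate coming from \eqref{bie1020} applied to $\varphi$ written as a resolvent, which gives a uniform-in-$x$ bound on the tail $\sum_{i>N}$, hence uniform convergence of the series.

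\textbf{Main obstacle.} The delicate part is Step 2: making the It\^o formula rigorous in the Banach space $E$ for the non-smooth functional $\varphi^2$ with $\varphi\in D({\cal K})$ only known to be $C^1_b(E)$ (not $C^2_b(E)$), while the second-order term $D^2(\varphi_\e^2)=2D\varphi_\e\otimes D\varphi_\e+2\varphi_\e D^2\varphi_\e$ contains a genuinely second-derivative piece that must be controlled uniformly in the mollification parameter $\e$. This is where the combination of the gradient estimate \eqref{bie46}, the second-derivative bound \eqref{bie21bis} for the state equation, and crucially the finiteness of the carr\'e-des-champs series from Theorem \ref{T4.2} all have to be used together; the OU-semigroup approximation in $E$ is the device that keeps every term meaningful along the way, and checking that the error terms it introduces vanish in the limit is the real work.
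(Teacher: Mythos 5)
Your outline contains the right ingredients (a Banach--space It\^o formula, an Ornstein--Uhlenbeck mollification, Theorem \ref{bie2033} to control the series, Fatou/dominated convergence), but two steps as written would not go through, and they are precisely where the paper's proof does its real work.

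First, in Step 2 you propose to ``apply the It\^o formula directly for equation \eqref{eqabstract} in the Banach space $E$'' to $\varphi_\e^2(u^x(t))$. This is not available: \eqref{eqabstract} is driven by space--time white noise and has the unbounded generator $A$, so $u^x$ is only a mild solution and the It\^o formula of Appendix A applies only to equations with \emph{finitely many} Brownian motions and Lipschitz coefficients (bounded drift operator included). The paper's proof hinges on a triple approximation that you omit: replace $F$ by the Lipschitz truncation $F_n$, the noise $w$ by $P_m w$, and $A$ by its Yosida approximation $A_k$, obtaining \eqref{e5.5}, for which It\^o's formula in $E$ is legitimate and yields the identity \eqref{e5.14} with a \emph{finite} sum $\sum_{i=1}^m$. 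The identity is then recast at the level of resolvents (Corollary \ref{c5.5}) and the three limits $k,m,n\to\infty$ are taken there (Lemmas \ref{l5.3}, \ref{l5.6}), with Theorem \ref{bie2033} controlling the now infinite series. Working at the resolvent level, i.e.\ proving $\varphi^2=(2\lambda-\mathcal K)^{-1}\bigl(2\varphi\psi+\sum_i|\langle G(\cdot)e_i,D\varphi\rangle_E|^2\bigr)$, is also what makes the membership $\varphi^2\in D(\mathcal K)$ immediate for the weakly defined generator \eqref{e5.1}; your ``divide by $t$ and let $t\to0$'' identification of the pointwise generator is not how $\mathcal K$ is defined and would need separate justification.

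Second, you mollify $\varphi$ itself, setting $\varphi_\e=R_\e\varphi$, and then write $\mathcal K\varphi_\e$ in the integrand. There is no reason for $R_\e\varphi$ to belong to $D(\mathcal K)$, and $\mathcal K$ does not commute with the OU semigroup, so $\mathcal K\varphi_\e$ is neither well defined nor controlled as $\e\to0$. The paper instead mollifies $\psi=\lambda\varphi-\mathcal K\varphi$ and sets $\varphi_t=(\lambda-\mathcal K)^{-1}R_t\psi$: this function is automatically in $D(\mathcal K)$ with $\mathcal K\varphi_t=\lambda\varphi_t-R_t\psi$ explicitly in hand, it is regular enough (since $R_t\psi\in C^2_b(E)$) for the already-established resolvent identity \eqref{e5.17} to apply, and $\varphi_t\to\varphi$, $R_t\psi\to\psi$ pointwise with uniform bounds, so that the limit $t\downarrow0$ can be taken via Fatou's lemma and the bound \eqref{br3331}. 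Your Step 1 (proving the identity for $P^n_t$ in $H$ first and ``transferring'' to $E$) is not what the paper does and would run into the problems the introduction warns about: $F_n$ is only Gateaux differentiable on $H$ with second derivatives existing only along $L^4$ directions, and the equality $\langle G(x)e_i,D\psi(x)\rangle_H=\langle G(x)e_i,D\psi(x)\rangle_E$ presupposes exactly the extension property whose failure motivates the whole reformulation \eqref{fine1}.
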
 
In order to prove  identity \eqref{e5.3}, we need suitable approximations  of problem  \eqref{eq1} besides  \eqref{eqn}.
For any $m\in\nat$, we denote by $u^x_{n,m}$ the unique   mild solution in $C^w_{p,T}(E)$ of the problem
\[
du(t)=\le[A u(t)+F_n(u(t))\r]\,dt+G(u(t))\,P_mdw(t),\ \ \ \ u(0)=x,
\]
where $P_mx=\sum_{i=1}^m\langle x,e_k\rangle e_k$, $x\in H$.
Moreover for any $k\in\nat$ we denote by $u^x_{n,m,k}$ the unique   solution in $C^w_{p,T}(E)$ of the problem
\begin{equation}
\label{e5.5}
du(t)=\le[A_k u(t)+F_n(u(t))\r]\,dt+G(u(t))\,P_mdw(t),\ \ \ \ u(0)=x,
\end{equation}
where $A_k=kA(k-A)^{-1}$ are the Yosida approximations of $A$.
The following result is straightforward.
\begin{Lemma}
\label{l5.2}
Under Hypotheses \ref{H1} and \ref{H2},
for any $x \in\,E$ and $T>0$ we have
\[
\lim_{m\to \infty}|u^x_{n,m}(t)-u_n^x(t)|_E=0,\quad\mbox{\it uniformly on}\;[0,T].
\]
Moreover for any  $x \in\,E,\;m\in\mathbb N$ and $T>0$ we have
 \[
\lim_{k\to \infty}|u^x_{n,m,k}(t)-u_{n,m}^x(t)|_E=0,\quad\mbox{\it uniformly on}\;[0,T].
\]
\end{Lemma}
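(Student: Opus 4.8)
The plan is to treat both limits by the same scheme used in the proof of Lemma~\ref{pr1}: write the difference of the two mild solutions, isolate the part of the discrepancy coming from the stochastic convolution, show that it is small, and close the estimate with the Gronwall Lemma. The essential simplification here is that $n$ (and, for the second statement, also $m$) is \emph{fixed}, so that the two solutions being compared are driven by the \emph{same} truncated nonlinearity $F_n$. Since $F_n$ is globally Lipschitz continuous in $E$ and $e^{tA}$ is a contraction on $E$, I may estimate the drift difference directly, without invoking the dissipativity inequality \eqref{dissiFEbisn}.

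For the first limit I set $\rho_m:=u^x_{n,m}-u^x_n$. The two equations differ only through the noise, and
\[
\rho_m(t)=\int_0^t e^{(t-s)A}\le[F_n(u^x_{n,m}(s))-F_n(u^x_n(s))\r]ds+\Gamma_m(t),\qquad \Gamma_m(t)=\int_0^t e^{(t-s)A}\le[G(u^x_{n,m}(s))P_m-G(u^x_n(s))\r]dw(s).
\]
I would rewrite the integrand of $\Gamma_m$ as $[G(u^x_{n,m})-G(u^x_n)]P_m-G(u^x_n)(I-P_m)$, so that $\Gamma_m=\Gamma_m^{(1)}-R_m$, where $R_m(t)=\int_0^t e^{(t-s)A}G(u^x_n(s))(I-P_m)\,dw(s)$ is exactly the difference between the stochastic convolution of $G(u^x_n)$ and its truncation to the first $m$ modes. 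Since the full convolution belongs to $C^w_{p,T}(E)$ by \cite[Theorem~4.2]{cerrai} and its $m$-mode truncations converge to it there, $\|R_m\|_{C^w_{p,T}(E)}\to 0$. The remaining part $\Gamma_m^{(1)}$ carries the factor $[G(u^x_{n,m})-G(u^x_n)]P_m$ and, $P_m$ being an $H$-contraction, is controlled through \eqref{bie4} by $\E\sup_{s\le t}|\Gamma_m^{(1)}(s)|_E^p\le c_p(t)\int_0^t\E|\rho_m(s)|_E^p\,ds$. Combining these with the Lipschitz bound on the drift and taking $\E\sup_{s\le t}(\cdot)^p$, one obtains
\[
\E\sup_{s\le t}|\rho_m(s)|_E^p\le c_p(T)\int_0^t\E\sup_{r\le s}|\rho_m(r)|_E^p\,ds+c_p(T)\,\|R_m\|_{C^w_{p,T}(E)}^p,
\]
and the Gronwall Lemma yields $\|\rho_m\|_{C^w_{p,T}(E)}\le c_p(T)\|R_m\|_{C^w_{p,T}(E)}\to 0$. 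This gives convergence in $C^w_{p,T}(E)$, from which the a.s.\ convergence uniform on $[0,T]$ in the statement follows along a subsequence (or directly, by extracting a summable decay rate for $\|R_m\|$).

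For the second limit I fix also $m$ and set $\sigma_k:=u^x_{n,m,k}-u^x_{n,m}$. Now the noise is finite-dimensional, $G(\cdot)P_m\,dw=\sum_{j=1}^m G(\cdot)e_j\,d\beta_j$, and the only structural change is $A_k=kA(k-A)^{-1}$ in place of $A$. The input I would use is the Yosida approximation: the $e^{tA_k}$ form uniformly bounded (in fact contraction) semigroups with $e^{tA_k}y\to e^{tA}y$ in $E$, uniformly for $t\in[0,T]$, for every fixed $y\in E$. Subtracting the two mild formulations, $\sigma_k$ splits into \emph{approximation-error} terms, in which $e^{(t-s)A_k}-e^{(t-s)A}$ acts on the fixed data $x$, $F_n(u^x_{n,m})$ and $G(u^x_{n,m})$, and \emph{Lipschitz} terms, which carry the differences $F_n(u^x_{n,m,k})-F_n(u^x_{n,m})$ and $G(u^x_{n,m,k})-G(u^x_{n,m})$ and are therefore bounded by $|\sigma_k|$.

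The approximation-error terms tend to zero in $C^w_{p,T}(E)$: the deterministic ones by the strong convergence of $e^{tA_k}$ and dominated convergence (the integrands are dominated using $F_n(u^x_{n,m})\in C^w_{p,T}(E)$), and the stochastic one — a finite sum of It\^o integrals with integrands $[e^{(t-s)A_k}-e^{(t-s)A}]G(u^x_{n,m}(s))e_j$ — by the Banach-space maximal inequality (the factorization method of \cite[Theorem~4.2]{cerrai}) together with dominated convergence, using that only finitely many modes occur and that $|G(u^x_{n,m})e_j|_E$ is $p$-integrable by \eqref{h13} together with the $C^w_{p,T}(E)$-bound on $u^x_{n,m}$. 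The Lipschitz terms are controlled by $\int_0^t\E\sup_{r\le s}|\sigma_k(r)|_E^p\,ds$ uniformly in $k$, by the uniform boundedness of $e^{tA_k}$, the Lipschitz continuity of $F_n$ and $G$, and the finite-dimensional version of \eqref{bie4}; Gronwall then gives $\|\sigma_k\|_{C^w_{p,T}(E)}\to 0$. I expect the only delicate point in either part to be the control of the stochastic convolution in the \emph{sup-norm over $[0,T]$} in the Banach space $E$ — the vanishing of the modal tail $R_m$ in the first part and the uniform-in-$k$ maximal estimate for the perturbed semigroups $e^{tA_k}$ in the second — but the progressive simplification of the noise (first truncated, then finite-dimensional) keeps both within reach of the tools already developed in \cite{cerrai}.
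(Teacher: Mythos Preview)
Your approach is correct and is precisely the natural argument the paper has in mind: in fact the paper offers no proof of this lemma at all, merely recording that ``the following result is straightforward''. Your scheme---Lipschitz continuity of the fixed $F_n$ and of $G$, vanishing of the residual stochastic (respectively Yosida-approximation) term, closed by Gronwall---is exactly the expected one, and the technical points you flag (sup-norm control in $E$ of the modal tail $R_m$ via \cite[Theorem~4.2]{cerrai}, and the uniform-in-$k$ maximal estimate for the finite-dimensional stochastic convolution under $e^{tA_k}$) are where the routine work lies.
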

Let us introduce the approximating Kolmogorov operators.  If  $\varphi \in C_b(E)$ and $\lambda>0$, they are  defined as above throughout their resolvents 
\[
(\lambda-\mathcal K_n)^{-1}\varphi(x)=\int_0^{+\infty}e^{-\la t}\,\E \varphi(u_n^x(t))dt,
\]
\[
(\lambda-\mathcal K_{n,m})^{-1}\varphi(x)=\int_0^{+\infty}e^{-\la t}\, \E \varphi(u_{n,m}^x(t))dt,
\]
and
\[
(\lambda-\mathcal K_{n,m,k})^{-1}\varphi(x)=\int_0^{+\infty}e^{-\la t}\, \E \varphi(u_{n,m,k}^x(t))dt,
\]
for any $\varphi \in\,C_b(E)$ and $x \in\,E$.
From Lemmas \ref{pr1} and \ref{l5.2}, we get the following approximation results.
\begin{Lemma}
\label{l5.3}
Assume Hypotheses \ref{H1} and \ref{H2}. Then, 
for any $\lambda>0$ and  $x \in\,E$, we have
\[
\lim_{n\to \infty}|(\lambda-\mathcal K_{n})^{-1}\varphi(x)-(\lambda-\mathcal K)^{-1}\varphi(x)|_{E}=0.
\]
If moreover $m\in\nat$,
\[
\lim_{m\to \infty}|(\lambda-\mathcal K_{n,m})^{-1}\varphi(x)-(\lambda-\mathcal K_n)^{-1}\varphi(x)|_{E}=0.
\]
If finally $k\in\nat$
 we have
 \[
\lim_{k\to \infty}|(\lambda-\mathcal K_{n,m,k})^{-1}\varphi(x)-(\lambda-\mathcal K_{n,m})^{-1}\varphi(x)|_{E}=0.
\]
\end{Lemma}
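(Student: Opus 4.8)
The plan is to reduce each of the three limits to the approximation results of Lemmas \ref{pr1} and \ref{l5.2} together with the dominated convergence theorem. Fix $\lambda>0$, $\varphi\in C_b(E)$ and $x\in E$. By the very definition of the resolvents,
\[
(\lambda-\mathcal{K}_n)^{-1}\varphi(x)-(\lambda-\mathcal{K})^{-1}\varphi(x)=\int_0^{+\infty}e^{-\lambda t}\,\E\le[\varphi(u^x_n(t))-\varphi(u^x(t))\r]\,dt,
\]
and analogously for $(\lambda-\mathcal{K}_{n,m})^{-1}\varphi(x)-(\lambda-\mathcal{K}_n)^{-1}\varphi(x)$ and for $(\lambda-\mathcal{K}_{n,m,k})^{-1}\varphi(x)-(\lambda-\mathcal{K}_{n,m})^{-1}\varphi(x)$, with the pair $(u^x_n,u^x)$ replaced by $(u^x_{n,m},u^x_n)$ and by $(u^x_{n,m,k},u^x_{n,m})$ respectively. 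Since $|\varphi|\le\|\varphi\|_0$, the integrand is bounded in absolute value by $2\|\varphi\|_0\,e^{-\lambda t}$, which is integrable on $[0,+\infty)$; in particular, given $\e>0$ one may fix $T>0$ so that the contribution of $\int_T^{+\infty}$ to each difference is at most $\frac{2\|\varphi\|_0}{\lambda}\,e^{-\lambda T}<\e$, uniformly in $n$ (resp.\ $m$, $k$).

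It then remains to control $\int_0^T$. By Lemma \ref{pr1}, applied with $R=|x|_E$, we have $\E\sup_{t\in[0,T]}|u^x_n(t)-u^x(t)|_E^p\to 0$ as $n\to\infty$, so that for each fixed $t\in[0,T]$ the random variable $u^x_n(t)$ converges to $u^x(t)$ in probability; the corresponding facts for $u^x_{n,m}\to u^x_n$ and $u^x_{n,m,k}\to u^x_{n,m}$ follow from Lemma \ref{l5.2}. Because $\varphi$ is uniformly continuous and bounded, this yields $\varphi(u^x_n(t))\to\varphi(u^x(t))$ in probability, hence, by the bounded convergence theorem, $\E\varphi(u^x_n(t))\to\E\varphi(u^x(t))$ for every $t\in[0,T]$. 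Thus $t\mapsto e^{-\lambda t}\,\E[\varphi(u^x_n(t))-\varphi(u^x(t))]$ converges pointwise to $0$ on $[0,T]$ and is dominated there by $2\|\varphi\|_0\,e^{-\lambda t}$, and a further application of dominated convergence gives $\int_0^T e^{-\lambda t}\,|\E[\varphi(u^x_n(t))-\varphi(u^x(t))]|\,dt\to 0$. Combining this with the tail estimate and letting $\e\downarrow 0$ proves the first limit; the second and third are obtained in exactly the same way, using the second and third parts of Lemma \ref{l5.2} in place of Lemma \ref{pr1}.

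The argument presents no genuine obstacle. The only two points that deserve a word of care are the uniform-in-$n$ (resp.\ $m$, $k$) control of the tail $\int_T^{+\infty}$, which is immediate from the boundedness of $\varphi$ and the exponential weight, and the passage from convergence of the approximating processes to convergence of $\E\varphi(\cdot)$, which rests on the uniform continuity and boundedness of $\varphi\in C_b(E)$ together with bounded convergence. Everything else is a direct consequence of the approximation statements already established in Lemmas \ref{pr1} and \ref{l5.2}.
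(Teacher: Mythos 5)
Your proof is correct and follows exactly the route the paper intends: the paper gives no written proof, merely asserting that the lemma follows from Lemmas \ref{pr1} and \ref{l5.2}, and your argument (Laplace-transform representation of the resolvent differences, a uniform tail estimate from the boundedness of $\varphi$, and dominated convergence on $[0,T]$ using the convergence of the approximating solutions together with the uniform continuity of $\varphi$) is precisely the standard way to carry that out. No gaps.
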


\begin{Lemma}
\label{l5.4}
Assume Hypotheses \ref{H1} and \ref{H2}. Then, 
for any $n,m,k\in\nat$ we have  $C^2_b(E)\subset D(\mathcal K_{n,m,k})$ and for any $\varphi\in C^2_b(E)$ we have
 \begin{equation}
 \label{e5.14}
 \mathcal K_{n,m,k}\varphi^2=2\varphi\, \mathcal K_{n,m,k}\varphi+\sum_{i=1}^m|\le<G(\cdot)e_i,D\varphi \r>_E|^2.
 \end{equation} 
\end{Lemma}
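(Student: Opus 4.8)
The plan is to take advantage of the fact that, after the triple regularization, equation \eqref{e5.5} is an It\^o equation in $E$ with the nicest possible coefficients: the unbounded operator $A$ has been replaced by the \emph{bounded} Yosida approximation $A_k$, the polynomially growing $f$ by the globally Lipschitz $F_n$, and the cylindrical noise by the finite dimensional noise $P_m\,dw(t)=\sum_{i=1}^m e_i\,d\beta_i(t)$. Since $A_k$ is bounded, the mild solution $u^x_{n,m,k}$ is in fact a strict solution,
\[
u^x_{n,m,k}(t)=x+\int_0^t\le[A_ku^x_{n,m,k}(s)+F_n(u^x_{n,m,k}(s))\r]ds+\sum_{i=1}^m\int_0^t G(u^x_{n,m,k}(s))e_i\,d\beta_i(s),
\]
whose bounded-variation part takes values in $E$; moreover, arguing as for \eqref{bien}, one has $\E\sup_{s\le t}|u^x_{n,m,k}(s)|_E^p\le c_{p,n,m,k}(t)\le(1+|x|_E^p\r)$ for every $p\ge 1$.

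First I would apply the modification of the It\^o formula for $E$-valued processes alluded to in the Introduction to $\varphi(u^x_{n,m,k}(t))$, for $\varphi\in C^2_b(E)$. Since the martingale part of $u^x_{n,m,k}$ is a finite sum of $E$-valued stochastic integrals against real Brownian motions, this produces
\[
\varphi(u^x_{n,m,k}(t))=\varphi(x)+\int_0^t{\cal L}_{n,m,k}\varphi(u^x_{n,m,k}(s))\,ds+\sum_{i=1}^m\int_0^t\le<G(u^x_{n,m,k}(s))e_i,D\varphi(u^x_{n,m,k}(s))\r>_E d\beta_i(s),
\]
where
\[
{\cal L}_{n,m,k}\varphi(x)=\frac12\sum_{i=1}^m D^2\varphi(x)(G(x)e_i,G(x)e_i)+\le<A_kx+F_n(x),D\varphi(x)\r>_E .
\]
Since $\varphi,D\varphi,D^2\varphi$ are bounded while $G(x)e_i$, $A_kx$, $F_n(x)$ grow at most polynomially in $|x|_E$, the moment bounds above show that the two stochastic integrals are genuine martingales; taking expectations gives, with $P^{n,m,k}_t\varphi(x):=\E\varphi(u^x_{n,m,k}(t))$, the identity $P^{n,m,k}_t\varphi(x)=\varphi(x)+\int_0^t P^{n,m,k}_s({\cal L}_{n,m,k}\varphi)(x)\,ds$. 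Combined with the Laplace-transform definition of $\mathcal K_{n,m,k}$ and the moment estimates, which make $(\lambda-\mathcal K_{n,m,k})^{-1}$ meaningful on the polynomially bounded function ${\cal L}_{n,m,k}\varphi$ (cf.\ \cite{cerrai94,tesi}), this yields $\varphi\in D(\mathcal K_{n,m,k})$ with $\mathcal K_{n,m,k}\varphi={\cal L}_{n,m,k}\varphi$; in particular $C^2_b(E)\subset D(\mathcal K_{n,m,k})$.

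Since $\varphi\in C^2_b(E)$ implies $\varphi^2\in C^2_b(E)$, the previous step applies to $\varphi^2$ as well. By the product rule, $D(\varphi^2)=2\varphi\,D\varphi$ and $D^2(\varphi^2)(h,h)=2\varphi\,D^2\varphi(h,h)+2|\le<h,D\varphi\r>_E|^2$; inserting $h=G(x)e_i$ and summing over $i=1,\dots,m$, the terms carrying $\varphi\,D^2\varphi$ and $\varphi\,D\varphi$ recombine into $2\varphi\,{\cal L}_{n,m,k}\varphi=2\varphi\,\mathcal K_{n,m,k}\varphi$, while the remaining terms give precisely $\sum_{i=1}^m|\le<G(\cdot)e_i,D\varphi\r>_E|^2$. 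This is \eqref{e5.14}.

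I expect the second step to be the main obstacle: one must run It\^o's formula genuinely in the non-Hilbertian space $E$, where stochastic integration and the quadratic-variation correction require care — this is exactly where the modified It\^o formula and, if needed, a mollification by the Ornstein-Uhlenbeck semigroup in $E$ come in — and one must keep track of the fact that ${\cal L}_{n,m,k}\varphi$ is only polynomially, not uniformly, bounded when identifying it with $\mathcal K_{n,m,k}\varphi$. By contrast, the passage from the identity for $\varphi$ to the one for $\varphi^2$ is a purely algebraic consequence of the chain rule, and in the subsequent lemmas the regularizations will be removed by letting $k\to\infty$, then $m\to\infty$, then $n\to\infty$ in \eqref{e5.14}, via Lemmas \ref{l5.2} and \ref{l5.3}.
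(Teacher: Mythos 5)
Your argument is correct and is essentially the paper's own proof, which consists of one sentence deferring to the It\^o formula in $E$ established in Appendix A (Proposition \ref{pA.1}, proved by approximating $\varphi$ with functions in $C^2_b(H)$ via Lemma \ref{Lbr21}) together with the product-rule computation of Remark \ref{rA.2}. Your additional remarks — that $A_k$ bounded, $F_n$ Lipschitz and the finite-dimensional noise make the solution strict so the formula applies, and that one must still reconcile the polynomial growth of ${\cal L}_{n,m,k}\varphi$ with the resolvent definition of $\mathcal K_{n,m,k}$ — simply make explicit what the paper leaves implicit.
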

\begin{proof}
Since the stochastic equation \eqref{e5.5} has regular coefficients and a finite dimensional noise term, the conclusion follows from It\^o's formula in the Banach space $E$ (see Appendix A). 
\end{proof}
\begin{Corollary}
\label{c5.5}
Let $\varphi_{n,m,k}=(\lambda- \mathcal K_{n,m,k})^{-1}\psi$, for $n,m,k\in\nat$ and $\psi\in C^2_b(E)$. Then, under Hypotheses \ref{H1} and \ref{H2}, the following identity  holds
 \begin{equation}
 \label{e5.15}
\varphi_{n,m,k}^2=(2\lambda- \mathcal K_{n,m,k})^{-1}(2\varphi_{n,m,k}\,\psi+\sum_{i=1}^m\le|\le< G(\cdot)e_i,D\varphi_{n,m,k}\r>_E\r|^2).
 \end{equation} 
\end{Corollary}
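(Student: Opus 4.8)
The plan is to apply the \emph{identit\'e du carr\'e des champs} of Lemma~\ref{l5.4} to the resolvent function $\varphi=\varphi_{n,m,k}$ and then invert the operator $2\lambda-\mathcal K_{n,m,k}$, the algebra being straightforward once the necessary regularity of $\varphi_{n,m,k}$ is in hand.

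The one point that needs care is that Lemma~\ref{l5.4} applies to functions in $C^2_b(E)$, so I would first verify that $\varphi_{n,m,k}=(\lambda-\mathcal K_{n,m,k})^{-1}\psi$ does belong to $C^2_b(E)$. This is where the regular structure of the approximating equation~\eqref{e5.5} enters: $A_k=kA(k-A)^{-1}$ is a \emph{bounded} operator on $E$, the Nemytskii operator $F_n$ is twice Fr\'echet differentiable on $E$ with \emph{bounded} derivatives up to order two (recall $f_n=f(\xi,n\gamma(\cdot/n))$ has bounded derivatives because $\gamma$ is bounded), $G$ has bounded first and second derivatives, and the noise $G(u)\,P_m\,dw$ is finite dimensional. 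Hence the arguments of Sections~\ref{subsec3.1}--\ref{subsec3.2}, carried out with $A$ replaced by $A_k$, give that $x\mapsto u^x_{n,m,k}$ is twice continuously differentiable in $C^w_{p,T}(E)$ with first and second derivatives bounded uniformly in $x$ (with constants depending on $n,m,k$); consequently $P^{n,m,k}_t$ maps $C^2_b(E)$ into itself with $\|P^{n,m,k}_t\varphi\|_2\le M_{n,m,k}\,e^{\omega_{n,m,k} t}\|\varphi\|_2$. Feeding this into the Laplace transform~\eqref{e5.1} shows $\varphi_{n,m,k}\in C^2_b(E)$, and since $C^2_b(E)$ is an algebra also $\varphi_{n,m,k}^2\in C^2_b(E)\subset D(\mathcal K_{n,m,k})$ by Lemma~\ref{l5.4}.

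Once this is in place, the computation is purely algebraic. By definition of the resolvent, $\mathcal K_{n,m,k}\varphi_{n,m,k}=\lambda\varphi_{n,m,k}-\psi$. Substituting this into identity~\eqref{e5.14} with $\varphi=\varphi_{n,m,k}$ gives
\[
\mathcal K_{n,m,k}\varphi_{n,m,k}^2
=2\varphi_{n,m,k}\bigl(\lambda\varphi_{n,m,k}-\psi\bigr)
+\sum_{i=1}^m\bigl|\langle G(\cdot)e_i,D\varphi_{n,m,k}\rangle_E\bigr|^2 ,
\]
hence
\[
(2\lambda-\mathcal K_{n,m,k})\varphi_{n,m,k}^2
=2\varphi_{n,m,k}\,\psi+\sum_{i=1}^m\bigl|\langle G(\cdot)e_i,D\varphi_{n,m,k}\rangle_E\bigr|^2 .
\]
The right-hand side belongs to $C_b(E)$ (the product $\varphi_{n,m,k}\psi$ trivially, and the finite sum because $\varphi_{n,m,k}\in C^1_b(E)$ and $x\mapsto|G(x)|_E$ is bounded on the relevant set), and $\varphi_{n,m,k}^2\in D(\mathcal K_{n,m,k})$; applying $(2\lambda-\mathcal K_{n,m,k})^{-1}$ to both sides yields precisely~\eqref{e5.15}.

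The main obstacle is thus concentrated entirely in the first step: establishing $\varphi_{n,m,k}\in C^2_b(E)$ together with the $C^2$-bound on $P^{n,m,k}_t$. This requires checking that the first- and second-derivative estimates of Sections~\ref{subsec3.1} and~\ref{subsec3.2} remain valid (with $n,m,k$-dependent constants) when $A$ is replaced by the bounded Yosida operator $A_k$ and the noise is truncated by $P_m$, and that these bounds pass through the Laplace transform defining $(\lambda-\mathcal K_{n,m,k})^{-1}$ so that the resolvent preserves $C^2_b(E)$. Everything downstream of that is the elementary manipulation displayed above.
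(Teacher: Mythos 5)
You follow the same route as the paper: establish $\varphi_{n,m,k}\in C^2_b(E)$ so that Lemma \ref{l5.4} applies, substitute $\mathcal K_{n,m,k}\varphi_{n,m,k}=\lambda\varphi_{n,m,k}-\psi$ into \eqref{e5.14}, and invert $2\lambda-\mathcal K_{n,m,k}$. Your justification of the regularity step is in fact more explicit than the paper's, which simply infers $\varphi_{n,m,k}\in C^2_b(E)$ from $\psi\in C^2_b(E)$; the ingredients you list (boundedness of $A_k$, boundedness of the derivatives of $F_n$ and $G$, finite-dimensional noise, the estimates of Sections \ref{subsec3.1} and \ref{subsec3.2} yielding \eqref{bie33nbis} for the approximating semigroup) are the right ones.

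There is, however, a sign error in your final ``hence'', and it is not harmless. Writing $\Sigma_m:=\sum_{i=1}^m\bigl|\langle G(\cdot)e_i,D\varphi_{n,m,k}\rangle_E\bigr|^2$, your (correct) intermediate display
\[
\mathcal K_{n,m,k}\varphi_{n,m,k}^2=2\varphi_{n,m,k}\bigl(\lambda\varphi_{n,m,k}-\psi\bigr)+\Sigma_m
\]
gives
\[
(2\lambda-\mathcal K_{n,m,k})\varphi_{n,m,k}^2
=2\lambda\varphi_{n,m,k}^2-\bigl(2\lambda\varphi_{n,m,k}^2-2\varphi_{n,m,k}\psi+\Sigma_m\bigr)
=2\varphi_{n,m,k}\,\psi-\Sigma_m,
\]
with a \emph{minus} sign in front of the sum. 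A plus sign there is incompatible with \eqref{e5.14} unless $\Sigma_m\equiv 0$: applying $2\lambda-\mathcal K_{n,m,k}$ to \eqref{e5.15} as printed yields $\mathcal K_{n,m,k}\varphi_{n,m,k}^2=2\varphi_{n,m,k}\,\mathcal K_{n,m,k}\varphi_{n,m,k}-\Sigma_m$, and, carried through Lemma \ref{l5.6} to \eqref{e5.3}, this would reverse the sign in \eqref{e5.21} and give $\int_E\varphi\,\mathcal K\varphi\,d\mu\geq 0$, which is impossible for a Markov generator and would destroy the Poincar\'e inequality. To be fair, the paper's own proof makes the identical slip (it writes $\varphi\,\mathcal K\varphi=\tfrac12\mathcal K(\varphi^2)-\tfrac12\Sigma_m$ with the wrong sign on $\Sigma_m$), so the statement itself should read $\varphi_{n,m,k}^2=(2\lambda-\mathcal K_{n,m,k})^{-1}\bigl(2\varphi_{n,m,k}\psi-\Sigma_m\bigr)$. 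That corrected identity is what your argument actually proves once the last line is fixed; everything else in your write-up (the regularity of the resolvent and the membership of the right-hand side in $C_b(E)$, which incidentally also uses that $|G(\cdot)|_E$ has at most the growth allowed by \eqref{h13}) is sound.
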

\begin{proof}
As
 \begin{equation}
 \label{e5.16}
\lambda\varphi_{n,m,k}- \mathcal K_{n,m,k}\varphi_{n,m,k}=\psi,
 \end{equation} 
since $\psi\in C^2_b(E)$  we have $\varphi_{n,m,k}\in C^2_b(E)$. Now, multiplying \eqref{e5.16} by $\varphi_{n,m,k}$ and taking into account  \eqref{e5.14}, we get
$$
\lambda\varphi^2_{n,m,k}-\frac12\;\mathcal K_{n,m,k}(\varphi_{n,m,k}^2)-\frac 12\sum_{i=1}^m\le|\le<G(\cdot)e_i,D\varphi_{n,m,k} \r>_E\r|^2
=\psi\varphi_{n,m,k}
$$
and the conclusion follows.
\end{proof}
\begin{Lemma}
\label{l5.6}
Let   $\varphi=(\lambda- \mathcal K)^{-1}\psi$, for $\psi\in C^2_b(E)$ and $\lambda>0$. Then, under Hypotheses \ref{H1} and \ref{H2},  the following identity  holds
 \begin{equation}
 \label{e5.17}
\varphi^2=(2\lambda- \mathcal K)^{-1}(2\varphi\,\psi+\sum_{i=1}^\infty \le|\le<G(\cdot)e_i,D\varphi\r>_E\r|^2).
 \end{equation} 
 Consequently, $\varphi^2\in D(\mathcal K)$ and \eqref{e5.3} holds.

\end{Lemma}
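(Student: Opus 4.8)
The plan is to obtain \eqref{e5.17} by passing to the limit in the identity \eqref{e5.15} of Corollary \ref{c5.5}, letting first $k\to\infty$, then $m\to\infty$, and finally $n\to\infty$, and then to read off \eqref{e5.3}. Writing the resolvent in \eqref{e5.15} as a Laplace transform, the identity to be passed to the limit becomes
\[
\varphi_{n,m,k}^2(x)=\int_0^{+\infty}e^{-2\lambda t}\,\E\Big[2\,\varphi_{n,m,k}(u^x_{n,m,k}(t))\,\psi(u^x_{n,m,k}(t))+S_{n,m,k}(u^x_{n,m,k}(t))\Big]\,dt,\qquad x\in E,
\]
where we set $S_{n,m,k}(y):=\sum_{i=1}^m\le|\le<G(y)e_i,D\varphi_{n,m,k}(y)\r>_E\r|^2$. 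First I would dispose of the regular pieces. By Lemma \ref{l5.3}, $\varphi_{n,m,k}(x)\to\varphi(x)$ for every $x\in E$ in the iterated limit, with $\|\varphi_{n,m,k}\|_0\leq\lambda^{-1}\|\psi\|_0$; by Lemmas \ref{pr1} and \ref{l5.2}, $u^x_{n,m,k}(t)\to u^x(t)$ in $C^w_{p,T}(E)$ for every $T>0$, hence, along a subsequence, $\Pro$-a.s.\ uniformly on $[0,T]$; since $\psi\in C_b(E)$, bounded convergence gives both $\varphi_{n,m,k}^2(x)\to\varphi^2(x)$ and the convergence of the first term on the right-hand side to $2\,(2\lambda-\mathcal K)^{-1}(\varphi\psi)(x)$. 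Consequently the remaining term $\int_0^{+\infty}e^{-2\lambda t}\E\,S_{n,m,k}(u^x_{n,m,k}(t))\,dt$ also converges, say to $L(x)\geq0$, and the identity already yields $\varphi^2(x)=2\,(2\lambda-\mathcal K)^{-1}(\varphi\psi)(x)+L(x)$; it remains to identify $L(x)$ with $(2\lambda-\mathcal K)^{-1}\big(\sum_{i=1}^\infty\le|\le<G(\cdot)e_i,D\varphi\r>_E\r|^2\big)(x)$.

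Next I would prove the termwise convergence inside $S_{n,m,k}$. Writing $\varphi_{n,m,k}=\int_0^{+\infty}e^{-\lambda t}P^{n,m,k}_t\psi\,dt$, with $P^{n,m,k}_t\phi(x):=\E\,\phi(u^x_{n,m,k}(t))$, and using that $\psi\in C^1_b(E)$, one has for each fixed $i$
\[
\le<G(y)e_i,D(P^{n,m,k}_t\psi)(y)\r>_E=\E\,\le<D_yu^y_{n,m,k}(t)\,G(y)e_i,\,D\psi(u^y_{n,m,k}(t))\r>_E.
\]
Combining the uniform bound $\|D_yu^y_{n,m,k}(t)\|_{\mathcal L(E)}\leq M_p\,e^{\omega_p t}$ with the convergence $D_yu^y_{n,m,k}(t)z\to D_yu^y(t)z$ — which is obtained by repeating the arguments of Lemmas \ref{bie16}, \ref{bie28} and \ref{bie31} for the harmless approximations $A_k\to A$, $P_m\to I$ and $F_n\to F$ of the variation equation — dominated convergence in $t$ gives $\le<G(y)e_i,D\varphi_{n,m,k}(y)\r>_E\to\le<G(y)e_i,D\varphi(y)\r>_E$ for every $y\in E$; together with $u^x_{n,m,k}(t)\to u^x(t)$ this yields the convergence of every summand of $S_{n,m,k}(u^x_{n,m,k}(t))$ to $\le|\le<G(u^x(t))e_i,D\varphi(u^x(t))\r>_E\r|^2$. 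A Fatou argument then gives at once $L(x)\geq(2\lambda-\mathcal K)^{-1}\big(\sum_{i=1}^\infty\le|\le<G(\cdot)e_i,D\varphi\r>_E\r|^2\big)(x)$, the inner series being finite by Theorem \ref{bie2033} since $\varphi=(\lambda-\mathcal K)^{-1}\psi\in D(\mathcal K)$.

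The reverse inequality — that in the limit no mass of the series escapes to $i=\infty$ — is the step I expect to be the real obstacle. It calls for a bound on the tails $\sum_{i=N+1}^m\le|\le<G(y)e_i,D\varphi_{n,m,k}(y)\r>_E\r|^2$ that is uniform in $n,m,k$ and tends to $0$ as $N\to\infty$. The natural route is to reproduce, for the triply approximated resolvents, the estimates behind Theorems \ref{T4.2} and \ref{bie2033}: the Bismut-Elworthy-Li formula \eqref{bie44}, the gradient bound \eqref{bie46}, the $H$- and $L^q$-estimates of Lemma \ref{Lbr24}, and the H\"older argument in the proof of Theorem \ref{bie2033}, all with constants independent of $n,m,k$. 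This yields a representation $\le<G(y)h,D\varphi_{n,m,k}(y)\r>_E=\le<\Lambda_{n,m,k}(y),h\r>_H$ with $\Lambda_{n,m,k}(y)\in H$ and $|\Lambda_{n,m,k}(y)|_H\leq c\,|G(y)|_E\,\|\psi\|_0$ uniformly; and, exploiting the smoothing present in \eqref{bie44}, it should also give a uniform bound for $\Lambda_{n,m,k}(y)$ in a subspace of $H$ with compact embedding, whence the equi-smallness of the tails follows, and then $\limsup_{(n,m,k)}\sum_{i>N}\int_0^{+\infty}e^{-2\lambda t}\E\,\le|\le<G(u^x_{n,m,k}(t))e_i,D\varphi_{n,m,k}(u^x_{n,m,k}(t))\r>_E\r|^2\,dt\to0$ as $N\to\infty$, by dominated convergence (using the at most linear growth of $G$ from \eqref{h13} and the uniform moment bounds on $u^x_{n,m,k}$). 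Checking that none of these estimates degenerates under the Yosida regularization $A_k$ and the spectral truncation $P_m$ of the noise — in particular that the Bismut-Elworthy-Li type formula for the finite-dimensional noise $G(\cdot)P_m\,dw$ still controls the directions $G(\cdot)e_i$ for $i\leq m$ with constants independent of $n,m,k$ — is where the bulk of the work lies.

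Once $L(x)=(2\lambda-\mathcal K)^{-1}\big(\sum_{i=1}^\infty\le|\le<G(\cdot)e_i,D\varphi\r>_E\r|^2\big)(x)$ has been established, \eqref{e5.17} follows; it exhibits $\varphi^2$ as $(2\lambda-\mathcal K)^{-1}$ applied to a fixed function, so that $\varphi^2\in D(\mathcal K)$, and substituting $\psi=(\lambda-\mathcal K)\varphi$ into \eqref{e5.17} and rearranging yields \eqref{e5.3}.
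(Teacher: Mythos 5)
Your proposal follows exactly the paper's route: the paper's entire proof of this lemma is the single sentence that the conclusion follows from Theorem \ref{bie2033}, Lemma \ref{l5.3} and Corollary \ref{c5.5} by letting $n,m,k\to\infty$, and your argument is a detailed elaboration of precisely that limit passage. You correctly isolate the one genuinely delicate point --- a tail bound on $\sum_{i>N}|\langle G(\cdot)e_i,D\varphi_{n,m,k}\rangle_E|^2$ uniform in $n,m,k$, needed to upgrade the Fatou inequality to an equality --- which the paper leaves entirely implicit, so your treatment is, if anything, more careful than the original.
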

\begin{proof}
The conclusion follows from Theorem \ref{bie2033}, 
Lemma \ref{l5.3} and Corollary \ref{c5.5}, by letting $n,m,k\to \infty$.
\end{proof}
We are now in the position to prove Theorem \ref{t5.1}.

\begin{proof}  
Let $\varphi\in D(\mathcal K)$, $\lambda>0$ and  $\psi=\lambda\varphi-\mathcal K\varphi $. If  we assume that $\psi \in C^2_b(E)$, then, due to Lemma \ref{l5.6}, we know   that \eqref{e5.17} holds.
Now assume $\psi \in\,C_b(E)$.  It is well known that we cannot find an uniform approximation of $\psi$, because $C^2_b(E)$  is not dense in $C_b(E)$. Thus, we define
$$
R_t\psi(x)=\int_H\psi(e^{tA}x+y)N_{Q_t}(dy),
$$
where $N_{Q_t}$ is the Gaussian measure in $H$ with mean $0$ and covariance $Q_t=-\frac12\;A^{-1}(1-e^{2tA})$ for $t\ge 0.$ As $N_{Q_t}$  is the law of the solution of the linear equation 
\[du(t)=A u(t)\,dt+dw(t),\ \ \ u(0)=0\]
which takes values in $E$ and
$e^{tA}x \in\,E$, for any $x \in\,H$ and $t>0$, we have that 
$R_t\psi \in\,B_b(H)$. 
Moreover, as proved in \cite{DPZ3},  we have that  for each $t>0$, $R_t \psi$ belongs to $C^\infty_b(H)$ and consequently to $C^\infty_b(E)$.

Now let $\varphi_t=(\lambda-\mathcal K)^{-1}R_t\psi$. Since $R_t\psi \in C^2_b(E)$,    we have
by \eqref{e5.17}
\begin{equation}
 \label{e5.18}
\varphi_t^2=(2\lambda- \mathcal K)^{-1}(2\varphi_t\,R_t\psi+\sum_{i=1}^\infty \le|\le<G(\cdot)e_i,D\varphi_t\r>_E\r|^2).
 \end{equation} 
 Therefore, the conclusion follows letting $t\to 0$.
Actually, if for any $x \in\,E$ we have
\[\lim_{t\to 0}h_t(x)=h(x),\ \ \ \ \sup_{t \in\,[0,1]}\sup_{x \in\,E}|h_t(x)|<\infty,\]
then it is immediate to check that 
\[\lim_{t\to 0}(\la-{\cal K})^{-1}h_t(x)= (\la-{\cal K})^{-1}h(x),\ \ \ \ x \in\,E.\]
Therefore, as for any $x \in\,E$
\[\lim_{t\to 0}\varphi_t^2(x)=\varphi^2(x),\ \ \ \ \lim_{t\to 0}\varphi_t(x)R_t\psi(x)=\varphi^2(x)\psi(x),\]
we get \eqref{e5.17} by taking the limit as $t\downarrow 0$ in both sides of \eqref{e5.18} if we show that
\begin{equation}
\label{br3332}
\lim_{t\to 0}\sum_{i=1}^\infty\le|\le<G(x)e_i,D\varphi_t(x)\r>_E\r|^2=\sum_{i=1}^\infty\le|\le<G(x)e_i,D\varphi(x)\r>_E\r|^2.
\end{equation}
Now, since
\[\lim_{t\to 0}R_t\psi(x)=\psi(x),\ \ \ \ \|R_t\psi\|_0\leq \|\psi\|_0,\]
according to \eqref{bie46bis} we have
\begin{equation}
\label{br333}
\lim_{t\to 0}\le<G(x)e_i,D\varphi_t(x)\r>_E=\le<G(x)e_i,D\varphi(x)\r>_E,
\end{equation}
for any $i \in\,\nat$. By proceeding as in the proof of Theorem \ref{T4.2}, we see that for any $\varphi \in\,C_b(E)$
\[\le|\le<G(x)h,D(P_t\varphi)(x)\r>_E\r|\leq \|\varphi\|_0\frac 1t\le|\E\int_0^t\le<G^{-1}(u^x(s))Du^x(s)G(x)h,dw(s)\r>_H\r|.\]
Now, as
\[\frac 1t\le|\E\int_0^t\le<G^{-1}(u^x(s))Du^x(s)G(x)h,dw(s)\r>_H\r|\leq ct^{-\frac 12}|G(x)|_E|h|_H,\]
we have that there exists $\Lambda(t,x) \in\,H$ such that
\[\frac 1t\le|\E\int_0^t\le<G^{-1}(u^x(s))Du^x(s)G(x)h,dw(s)\r>_H\r|=\le<\Lambda(t,x),h\r>_H\] and
\begin{equation}
\label{br3331}
|\Lambda(t,x)|_H\leq c\,t^{-\frac 12}|G(x)|_E.
\end{equation}
By arguing as in the proof of Theorem \ref{bie2033}, with $\e=1/2$, this implies that
\[\begin{array}{l}
\ds{\le|\le<G(x)e_i,D\varphi_t(x)\r>_E\r|^2\leq c\int_0^\infty e^{-\la s}(s\wedge 1)^{\frac 12}\le|\le<G(x)e_i,D(P_s(R_t\psi))(x)\r>_E\r|^2\,ds}\\
\vs
\ds{\leq c\,\|R_t\varphi\|_0\int_0^\infty e^{-\la s}(s\wedge 1)^{\frac 12}\le|\le<\Lambda(s,x),e_i\r>_H\r|^2\,ds\leq c\,\|\varphi\|_0\int_0^\infty e^{-\la s}(s\wedge 1)^{\frac 12}\le|\le<\Lambda(s,x),e_i\r>_H\r|^2\,ds.}
\end{array}\]

Therefore, as due to \eqref{br3331}
\[\sum_{i=1}^\infty\int_0^\infty e^{-\la s}(s\wedge 1)^{\frac 12}\le|\le<\Lambda(s,x),e_i\r>_H\r|^2\,ds<\infty,\]
and \eqref{br333} holds, from the Fatou's lemma we get \eqref{br3332} and \eqref{e5.3} follows for a general 
$\varphi \in\,D({\cal K})$.

  \end{proof}

 \section{Invariant measures}

In \cite{cerrai} it has been proved that there exists an invariant probability measure $\mu$ on $(E,\mathcal B(E))$ for the semigroup $P_t$,
 In particular, if $\varphi \in\,D({\cal K})$ we have
  \begin{equation}
 \label{e5.20}
 \int_E \mathcal K\varphi\,d\mu=0.
 \end{equation}
 
 From now on we shall assume that the following condition is satisfied.
 \begin{Hypothesis}
 \label{H3}
 There exists $\a>0$ such that
$A+F^\prime \leq -\a\,I$,
 and $g(\xi,\rho)$ is uniformly bounded on $[0,1]\times \reals$.
 \end{Hypothesis}
 
  In \cite[Proposition 4.1]{cerraisiam}, we have proved that under Hypothesis \ref{H3} there exists $\d>0$ such that for any $p\geq 1$ and $x \in\,E$
 \begin{equation}
 \label{bie2035}
 \E\,|u^x(t)|_E^p\leq c_p\le(1+e^{-\d p t}|x|_E^p\r),\ \ \ t\geq 0.
 \end{equation}
 As a consequence of this, we have that for any $p\geq 1$
 \begin{equation}
 \label{bie10220}
 \int_E |x|_E^p\,\mu(dx)<\infty.
 \end{equation}
 Actually, due to the invariance of $\mu$, for any $t\geq 0$ it holds
 \[\begin{array}{l}
 \ds{\int_E|x|_E^p\,\mu(dx)=\int_E\E\,|u^x(t)|_E^p\,\mu(dx)\leq c_p\le(1+e^{-\d p t}\int_E|x|_E^p\,\mu(dx)\r).}
 \end{array}\]
 Therefore, if we choose $t_0$ such that $c_p e^{-\d p t_0}<1/2$, we have that \eqref{bie10220} follows.
 
\begin{Remark}
{\em   In order to have \eqref{bie2035} it is not necessary to assume that $g$ is uniformly bounded. Actually, \eqref{h13} is what we need to prove \eqref{bie2035}. In Hypothesis \ref{H3} we are assuming that $g$ is bounded in view of the proof of the Poincar\'e inequality, where we need an estimate uniform with respect to $x \in\,E$.

}
\end{Remark}

Now, as $\mu$ is invariant, it is well known that $P_t$ can be uniquely extended to a semigroup of contractions on $L^2(E,\mu)$ which we shall still denote by $P_t$, whereas we shall denote by $\mathcal K_2$ its infinitesimal generator. 
 \begin{Lemma}
 \label{l5.8}
 Assume Hypotheses \ref{H1}, \ref{H2} and \ref{H3}. Then,
 $ D(\mathcal K)$  is a core for $\mathcal K_2.$ 
 \end{Lemma}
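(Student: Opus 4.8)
The plan is to show that $D(\mathcal K)$ is dense in $D(\mathcal K_2)$ with respect to the graph norm $\|\varphi\|_{D(\mathcal K_2)}^2 = \|\varphi\|_{L^2(E,\mu)}^2 + \|\mathcal K_2\varphi\|_{L^2(E,\mu)}^2$. Since $D(\mathcal K)$ is an operator core for $\mathcal K$ in $C_b(E)$ (by its very definition through the resolvent \eqref{e5.1}), and since $\mathcal K_2$ extends $\mathcal K$, it suffices to show that $C_b(E)$ itself is dense in $L^2(E,\mu)$ and that the resolvents $(\lambda-\mathcal K)^{-1}$, acting on $C_b(E)$, approximate $(\lambda-\mathcal K_2)^{-1}$ on $L^2(E,\mu)$ in the appropriate sense. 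A standard abstract criterion is: if $\mathcal D \subset D(\mathcal K_2)$ is a subspace which is dense in $L^2(E,\mu)$ and invariant under the resolvent $(\lambda-\mathcal K_2)^{-1}$ for some (hence all) $\lambda>0$, then $\mathcal D$ is a core for $\mathcal K_2$. So the real work is to verify these three ingredients for $\mathcal D = D(\mathcal K)$.

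First I would recall that for $\varphi \in C_b(E)$, the function $(\lambda-\mathcal K)^{-1}\varphi = \int_0^\infty e^{-\lambda t} P_t\varphi\,dt$ lies in $D(\mathcal K)$, and that by the $L^2$-contraction property of $P_t$ (consequence of the invariance of $\mu$) together with the dominated convergence theorem, this Bochner integral also represents $(\lambda-\mathcal K_2)^{-1}\varphi$ when $\varphi$ is viewed as an element of $L^2(E,\mu)$; here one uses \eqref{bie10220} to know $C_b(E)\subset L^2(E,\mu)$. Hence $(\lambda-\mathcal K)^{-1}\big(C_b(E)\big) \subseteq D(\mathcal K) \cap D(\mathcal K_2)$ and on this set $\mathcal K$ and $\mathcal K_2$ agree. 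Next, $C_b(E)$ is dense in $L^2(E,\mu)$ (it even contains the uniformly continuous bounded functions, which separate points and form an algebra; density follows by the usual measure-theoretic argument since $\mu$ is a Borel probability measure on the Polish space $E$). Consequently $(\lambda-\mathcal K)^{-1}\big(C_b(E)\big)$ is dense in $D(\mathcal K_2)$ for the graph norm: indeed $(\lambda-\mathcal K_2)^{-1}$ is a bounded bijection from $L^2(E,\mu)$ onto $D(\mathcal K_2)$ carrying the $L^2$-norm to an equivalent norm on the graph, and it maps the dense set $C_b(E)$ into our subspace.

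Finally I would assemble the pieces: since $(\lambda-\mathcal K)^{-1}\big(C_b(E)\big) \subseteq D(\mathcal K)$ and the former is already graph-dense in $D(\mathcal K_2)$, a fortiori $D(\mathcal K)$ is graph-dense in $D(\mathcal K_2)$, i.e. $D(\mathcal K)$ is a core for $\mathcal K_2$. The main obstacle, and the point requiring genuine care rather than soft abstract nonsense, is the identification of the $C_b(E)$-resolvent with the $L^2$-resolvent: one must check that the semigroup $P_t$ defined pointwise on $B_b(E)$ via $P_t\varphi(x)=\mathbb E\,\varphi(u^x(t))$ extends consistently to the Markov semigroup on $L^2(E,\mu)$, that $t\mapsto P_t\varphi$ is measurable and exponentially bounded in $L^2(E,\mu)$ for $\varphi \in C_b(E)$ (which follows from $\|P_t\varphi\|_{L^2(E,\mu)}\le \|\varphi\|_{L^2(E,\mu)}$ by invariance), and that the Laplace-transform formula \eqref{e5.1} therefore literally computes $(\lambda-\mathcal K_2)^{-1}\varphi$. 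Once this consistency is in place, everything else is routine, and the statement follows.
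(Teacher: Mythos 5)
Your proof is correct and follows essentially the same route as the paper: the paper takes $\varphi \in D(\mathcal K_2)$, approximates $\psi=\lambda\varphi-\mathcal K_2\varphi$ by $\psi_n\in C_b(E)$ in $L^2(E,\mu)$, and uses that $(\lambda-\mathcal K_2)^{-1}\psi_n=(\lambda-\mathcal K)^{-1}\psi_n\in D(\mathcal K)$ converges to $\varphi$ in graph norm, which is exactly your ``image of a dense set under the resolvent is a graph-dense subset of $D(\mathcal K)$'' argument. Your explicit flagging of the consistency of the $C_b(E)$- and $L^2$-resolvents is a point the paper leaves implicit, but it is the same proof.
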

 \begin{proof}
   Let $\psi:=\varphi-\mathcal K_2\varphi$, for  $\varphi\in  D(\mathcal K_2)$ and $\lambda>0$.
  Since $C_b(E)$ is dense in $L^2(E,\mu)$, there exists a sequence $(\psi_n)\subset C_b(E)$ convergent to $\psi$ in $L^2(E,\mu)$. If we set $\varphi_n:=(\lambda-\mathcal K_2)^{-1}\psi_n$ then $\varphi_n\in D(\mathcal K)$ and 
 $$
 \varphi_n\to \varphi,\quad  \mathcal K_2 \varphi_n\to \mathcal K\varphi\quad\mbox{\rm in}\;L^2(E,\mu),
 $$
which shows that  $ D(\mathcal K)$  is a core for ${\cal K}_2.$ 
\end{proof}
  
 \subsection{Consequences of the  ``Egalit\'e du carr\'e des champs''}

 Our first result is the so called {\em Egalit\'e du carr\'e des champs} (see \cite{BE}).
 \begin{Proposition}
 \label{p5.7}
 Assume  that Hypotheses \ref{H1}, \ref{H2} and \ref{H3} hold. Then for any $\varphi\in D(\mathcal K)$ we have
  \begin{equation}
  \label{e5.21}
  \int_E\mathcal K\varphi(x)\;\varphi(x)\;d\mu(x)=-\frac12\;\int_E\sum_{i=1}^\infty \le|\le<G(x)e_i,D\varphi(x)\r>_E\r|^2d\mu(x).
  \end{equation} 
 \end{Proposition}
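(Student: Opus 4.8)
The plan is to derive the ``Egalit\'e du carr\'e des champs'' \eqref{e5.21} directly from the ``Identit\'e du carr\'e des champs'' \eqref{e5.3} by integrating against the invariant measure $\mu$. Given $\varphi\in D(\mathcal K)$, Theorem \ref{t5.1} tells us that $\varphi^2\in D(\mathcal K)$ and that
\[
\mathcal K\varphi^2=2\varphi\,\mathcal K\varphi+\sum_{i=1}^\infty\le|\le<G(\cdot)e_i,D\varphi\r>_E\r|^2.
\]
Since $\varphi^2\in D(\mathcal K)$, the invariance relation \eqref{e5.20} gives $\int_E\mathcal K\varphi^2\,d\mu=0$, so integrating the displayed identity over $E$ would immediately yield
\[
0=2\int_E\varphi\,\mathcal K\varphi\,d\mu+\int_E\sum_{i=1}^\infty\le|\le<G(x)e_i,D\varphi(x)\r>_E\r|^2\,d\mu(x),
\]
which is exactly \eqref{e5.21}. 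So at the level of formal manipulation the proof is a one-liner; the real content is making each integral meaningful.

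The main point I would have to check carefully is integrability. First, $\varphi\,\mathcal K\varphi\in L^1(E,\mu)$: both $\varphi$ and $\mathcal K\varphi$ lie in $C_b(E)$ (the latter because $\varphi\in D(\mathcal K)$ forces $\mathcal K\varphi\in C_b(E)$ by construction of $\mathcal K$ via its resolvent), hence the product is bounded and trivially $\mu$-integrable since $\mu$ is a probability measure. Second, and more delicate, I must verify $\sum_i|\langle G(x)e_i,D\varphi(x)\rangle_E|^2\in L^1(E,\mu)$. Here Theorem \ref{bie2033} is the key input: it gives the pointwise bound
\[
\sum_{i=1}^\infty\le|\le<G(x)e_i,D\varphi(x)\r>_E\r|^2\leq c\,|G(x)|_E^2\le(\|\varphi\|_0^2+\|\mathcal K\varphi\|_0^2\r).
\]
Under Hypothesis \ref{H3}, $g$ is uniformly bounded, so $|G(x)|_E\leq \|g\|_\infty$ is a constant, and the right-hand side is a finite constant independent of $x$; hence the series is bounded and $\mu$-integrable. (Even without the uniform boundedness of $g$, \eqref{h13} together with the moment bound \eqref{bie10220} would suffice, but invoking Hypothesis \ref{H3} makes it immediate.) Because $\mathcal K\varphi^2=2\varphi\,\mathcal K\varphi+(\text{the series})$ and the first two terms are in $L^1(E,\mu)$, the series is also in $L^1(E,\mu)$ as a difference of integrable functions, which is consistent.

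With integrability secured, I would conclude by writing $\int_E\mathcal K(\varphi^2)\,d\mu=0$, which holds by \eqref{e5.20} applied to the function $\varphi^2\in D(\mathcal K)$, then substituting \eqref{e5.3}, splitting the integral (legitimate since each summand is in $L^1(E,\mu)$), and rearranging to isolate $\int_E\varphi\,\mathcal K\varphi\,d\mu$. The only obstacle worth flagging is precisely the justification that the infinite sum $\sum_i|\langle G(x)e_i,D\varphi(x)\rangle_E|^2$ is a well-defined, $\mu$-integrable function of $x$ — and that is exactly what Theorem \ref{bie2033} was set up to provide, so in the present framework the proof is short. One could add a remark that \eqref{e5.21} is the natural candidate for the Dirichlet form $\mathcal E(\varphi,\varphi):=\frac12\int_E\sum_i|\langle G(x)e_i,D\varphi(x)\rangle_E|^2\,d\mu(x)=-\int_E\varphi\,\mathcal K\varphi\,d\mu$, which will be used in the subsequent construction of $W^{1,2}(E;\mu)$ and in the proof of the Poincar\'e inequality.
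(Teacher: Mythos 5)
Your proposal is correct and follows essentially the same route as the paper: apply Theorem \ref{t5.1} to get $\varphi^2\in D(\mathcal K)$ and identity \eqref{e5.3}, justify integrability of the series via the bound \eqref{bie10200} (the paper pairs it with the moment estimate \eqref{bie10220} to control $|G(x)|_E^2$ in general, while you shortcut this using the uniform boundedness of $g$ from Hypothesis \ref{H3} — both are valid), and then integrate against $\mu$ using \eqref{e5.20}. No gaps.
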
 
 {\bf Proof}.  Let $\varphi\in D(\mathcal K)$.  Then, by Theorem \ref{t5.1}, $\varphi^2\in D(\mathcal K)$ and  identity \eqref{e5.3} holds. According to \eqref{bie10200} and \eqref{bie10220}, we can
integrate both sides of \eqref{e5.3} with respect to $\mu$
and taking into account that, in view of \eqref{e5.20}, $ \int_E\mathcal K(\varphi^2)d\mu=0,$  we get the conclusion. \hfill$\Box$\medskip

  Let us show a similar  identity for the semigroup $P_t$.
 \begin{Proposition}
 \label{p6.4}
Let $\varphi\in C^1_b(E)$ and set $v(t,x)=P_t\varphi(x)$.  Then, under Hypotheses \ref{H1}, \ref{H2} and \ref{H3}, we have
\[v\in L^\infty(0,T;L^2(E,\mu)),\ \ \ \  \sum_{i=1}^\infty\le|\le<G(\cdot)e_i,D_xv\r>_E\r|^2 \in\, L^1(0,T;L^1(E,\mu)),\]
for any $T>0$. Moreover
  \begin{equation}
  \label{e6.5}
  \int_E(P_t\varphi)^2\mu(dx)+\int_0^tds\int_E \sum_{i=1}^\infty\le|\le<G(x)e_i,D(P_s\varphi)(x)\r>_E\r|^2\,\mu(dx)=\int_H\varphi^2(x)\mu(dx).
  \end{equation} 

 \end{Proposition}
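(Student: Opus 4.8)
The plan is the following. The two membership claims are immediate: since $|v(t,x)|=|P_t\varphi(x)|\leq\|\varphi\|_0$ and $\mu$ is a probability measure, $v\in L^\infty(0,T;L^2(E,\mu))$; and by \eqref{bie1020bis}, for $s\in(0,T]$,
\[
\sum_{i=1}^\infty\le|\le<G(x)e_i,D(P_s\varphi)(x)\r>_E\r|^2\leq c(T)\,(s\wedge 1)^{-1/2}\,\|\varphi\|_1^2\,|G(x)|_E^2,
\]
whose right-hand side lies in $L^1((0,T)\times E,ds\otimes\mu)$ since $\int_0^T(s\wedge1)^{-1/2}ds<\infty$ and, by \eqref{bie10220}, $\int_E|G(x)|_E^2\,\mu(dx)\leq c\int_E(1+|x|_E^2)\,\mu(dx)<\infty$. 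The core of the statement is the identity \eqref{e6.5}, which I would prove first for $\varphi\in D(\mathcal K)$ and then extend to $C^1_b(E)$ by approximation.

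Let $\varphi\in D(\mathcal K)$. Since $D(\mathcal K)$ is invariant under $P_s$ with $\mathcal K(P_s\varphi)=P_s(\mathcal K\varphi)$, the map $s\mapsto P_s\varphi(x)$ is of class $C^1$ with $\frac{d}{ds}P_s\varphi(x)=\mathcal K(P_s\varphi)(x)$, and therefore $\frac{d}{ds}(P_s\varphi(x))^2=2\,P_s\varphi(x)\,\mathcal K(P_s\varphi)(x)$, a quantity bounded by $2\|\varphi\|_0\|\mathcal K\varphi\|_0$ uniformly in $(s,x)\in[0,t]\times E$. Differentiating under $\int_E(\cdot)\,d\mu$, integrating over $s\in[0,t]$, and applying the \'egalit\'e du carr\'e des champs \eqref{e5.21} to $P_s\varphi\in D(\mathcal K)$ (legitimate by \eqref{bie10200} and \eqref{bie10220}), I obtain
\[
\int_E(P_t\varphi)^2d\mu-\int_E\varphi^2d\mu=\int_0^t\!\!\int_E 2\,P_s\varphi\,\mathcal K(P_s\varphi)\,d\mu\,ds=-\int_0^t\!\!\int_E\sum_{i=1}^\infty\le|\le<G(x)e_i,D(P_s\varphi)(x)\r>_E\r|^2 d\mu\,ds,
\]
Fubini being justified by the uniform bound above; this is \eqref{e6.5} for $\varphi\in D(\mathcal K)$.

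For a general $\varphi\in C^1_b(E)$ I would take $\varphi_\lambda:=\lambda(\lambda-\mathcal K)^{-1}\varphi\in D(\mathcal K)$ with $\lambda$ large. From $\|P_t\varphi\|_1\leq Me^{\omega t}\|\varphi\|_1$ one has $\|\varphi_\lambda\|_1\leq c\|\varphi\|_1$ uniformly in $\lambda$; moreover, writing $\varphi_\lambda(x)=\int_0^\infty e^{-u}P_{u/\lambda}\varphi(x)\,du$ and the analogous identity for $D\varphi_\lambda$, and using $\le<h,D(P_\sigma\varphi)(x)\r>_E=\E\le<D_xu^x(\sigma)h,D\varphi(u^x(\sigma))\r>_E\to\le<h,D\varphi(x)\r>_E$ as $\sigma\to0$ (because $D_xu^x(\sigma)h\to h$ and $u^x(\sigma)\to x$ in $E$ by Theorem \ref{bie34}), one checks that $\varphi_\lambda\to\varphi$ and $\le<h,D\varphi_\lambda\r>_E\to\le<h,D\varphi\r>_E$ pointwise, with uniform bounds. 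Applying \eqref{e6.5} to $\varphi_\lambda$ and letting $\lambda\to\infty$, the first and third terms converge by dominated convergence, using $\|\varphi_\lambda\|_0\leq\|\varphi\|_0$ and $P_t\varphi_\lambda=\lambda(\lambda-\mathcal K)^{-1}P_t\varphi\to P_t\varphi$ pointwise.

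The delicate point, which I expect to be the main obstacle, is the convergence of the middle term; here I would argue as in the proof of Theorem \ref{t5.1} (cf.\ \eqref{br3332}). For fixed $s>0$ and $x\in E$, \eqref{bie46bis} provides $\Lambda_\varphi(\sigma,x)\in H$ with $\le<G(x)e_i,D(P_\sigma\varphi)(x)\r>_E=\le<\Lambda_\varphi(\sigma,x),e_i\r>_H$ and, with $Z_\sigma(x)\in L^2(\Omega;H)$ defined by $\le<Z_\sigma(x),h\r>_H=\int_0^\sigma\le<G^{-1}(u^x(r))D_xu^x(r)G(x)h,dw(r)\r>_H$,
\[
\le|\le<\Lambda_\varphi(\sigma,x),e_i\r>_H\r|\leq\sigma^{-1}\|\varphi\|_0\le(\E\,|\le<Z_\sigma(x),e_i\r>_H|^2\r)^{1/2}.
\]
Since $\E|\le<Z_\sigma(x),e_i\r>_H|^2$ is nondecreasing in $\sigma$ and $\sum_i\kappa_i(x):=\sum_i\sup_{\sigma>0}\E|\le<Z_\sigma(x),e_i\r>_H|^2<\infty$ — this supremum being, up to the bounded factor $G^{-1}$, precisely the Hilbert--Schmidt quantity that underlies the validity of \eqref{bie46bis}, finite thanks to the smoothing of $e^{tA}$ and the dissipativity of Hypothesis \ref{H3} — the representation $P_s\varphi_\lambda=\lambda\int_0^\infty e^{-\lambda\tau}P_{s+\tau}\varphi\,d\tau$ yields the $\lambda$-uniform bound $\le|\le<G(x)e_i,D(P_s\varphi_\lambda)(x)\r>_E\r|^2\leq s^{-2}\|\varphi\|_0^2\,\kappa_i(x)$ with $\sum_i\kappa_i(x)<\infty$. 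Combined with the pointwise convergence $\le<G(x)e_i,D(P_s\varphi_\lambda)(x)\r>_E\to\le<G(x)e_i,D(P_s\varphi)(x)\r>_E$ (from the same representation and the continuity of $\sigma\mapsto\le<h,D(P_\sigma\varphi)(x)\r>_E$ on $(0,\infty)$ read off \eqref{bie46bis}), dominated convergence over $\nat$ gives $\sum_i|\le<G(x)e_i,D(P_s\varphi_\lambda)(x)\r>_E|^2\to\sum_i|\le<G(x)e_i,D(P_s\varphi)(x)\r>_E|^2$ for every $s>0$, $x\in E$, and a last dominated convergence in $(s,x)$, with the $\lambda$-independent majorant from \eqref{bie1020bis} (applied to $\varphi_\lambda$, using $\|\varphi_\lambda\|_1\leq c\|\varphi\|_1$) as in the first step, then gives \eqref{e6.5} for $\varphi\in C^1_b(E)$. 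The verification of this $\lambda$-uniform summable domination of the series is, I expect, the technically heaviest step.
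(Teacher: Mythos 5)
Your overall architecture coincides with the paper's: first establish \eqref{e6.5} for $\varphi\in D(\mathcal K)$ by differentiating $t\mapsto\int_E(P_t\varphi)^2d\mu$ and invoking the \emph{\'egalit\'e du carr\'e des champs} \eqref{e5.21} for $P_s\varphi$, then pass to general $\varphi\in C^1_b(E)$ via $\varphi_\lambda=\lambda(\lambda-\mathcal K)^{-1}\varphi$, using the pointwise convergences and uniform bounds for $\varphi_\lambda$ and $D\varphi_\lambda$ and the $\lambda$-independent integrable majorant supplied by \eqref{bie1020bis}. Up to and including the identification of the middle term as the delicate point, this is exactly the paper's proof.

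The gap is in your majorant for the termwise dominated convergence of the series. You set $\kappa_i(x)=\sup_{\sigma>0}\E\,|\langle Z_\sigma(x),e_i\rangle_H|^2=\int_0^\infty\E\,|G^{-1}(u^x(r))D_xu^x(r)G(x)e_i|_H^2\,dr$ and assert $\sum_i\kappa_i(x)<\infty$, attributing this to "the Hilbert--Schmidt quantity that underlies \eqref{bie46bis}". That attribution is not correct: the Bismut--Elworthy--Li formula \eqref{bie46bis} only requires $\int_0^t\E|G^{-1}(u^x(r))D_xu^x(r)h|_H^2\,dr<\infty$ for each \emph{fixed} $h$, not summability over an orthonormal basis. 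The summability $\sum_i\int_0^\sigma\E|D_xu^x(r)G(x)e_i|_H^2\,dr<\infty$ is precisely the direct Hilbert--Schmidt estimate that the authors state (in the introduction) they \emph{cannot} obtain in this multiplicative, polynomially nonlinear setting -- note that the crude bound $\E|D_xu^x(r)G(x)e_i|_H^2\leq c\,e^{-\delta r}|G(x)e_i|_H^2$ sums to $+\infty$ since $\sum_i|G(x)e_i|_H^2=\int_0^1g^2(\xi,x(\xi))\sum_ie_i^2(\xi)\,d\xi=\infty$, so one would need decay in $i$ coming from $e^{rA}$ acting through the non-commuting operator $G(x)$, and that is exactly where a direct computation breaks down. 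The paper circumvents this by a duality/Parseval device: for each fixed time $s$ the bounded linear functional $h\mapsto\langle G(x)h,D(P_s\psi)(x)\rangle_E$ on $H$ is represented by a single vector $\Lambda_\psi(s,x)\in H$ with $|\Lambda_\psi(s,x)|_H\leq c\|\psi\|_0(s\wedge1)^{-1/2}|G(x)|_E$, so that $\sum_i|\langle\Lambda_\psi(s,x),e_i\rangle_H|^2=|\Lambda_\psi(s,x)|_H^2$ is controlled \emph{before} any supremum or integral in $s$ is taken; the $i$-summable, approximation-uniform majorant is then of the form $\int_0^\infty e^{-\lambda s}(s\wedge1)^{1/2}|\langle\Lambda(s,x),e_i\rangle_H|^2\,ds$ (see the proofs of Theorems \ref{T4.2} and \ref{t5.1}, and \eqref{bie10200}). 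Your $\kappa_i$ puts the $\sup_\sigma$ inside the sum over $i$, which destroys the Parseval identity and forces the unavailable estimate. The fix is to replace $\kappa_i$ by the paper's construction: write $\langle G(x)e_i,D(P_s\varphi_\lambda)(x)\rangle_E=\langle\lambda\int_0^\infty e^{-\lambda\tau}\Lambda_\varphi(s+\tau,x)\,d\tau,e_i\rangle_H$ and dominate by a fixed $H$-valued, $s$-integrated quantity whose $i$-sum is a norm squared. With that substitution your argument closes and agrees with the paper's.
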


 \begin{proof}
If we assume that $\varphi \in\,D({\cal K})$, we have $P_t\varphi \in\,D({\cal K})$ and  ${\cal K}P_t\varphi=P_t {\cal K}\varphi$
(for a proof see \cite[Lemma B.2.1]{tesi}).
According to \eqref{bie10200}, this yields 
 \[\begin{array}{l}
 \ds{\sum_{i=1}^\infty\le|\le<G(x)e_i,D_xv(t,x)\r>_E\r|^2\leq c\,|G(x)|_E^2\le(\|P_t\varphi\|_0^2+\|{\cal K}P_t\varphi\|_0^2\r)}\\
 \vs
 \ds{\leq c\,|G(x)|_E^2\le(\|\varphi\|_0^2+\|{\cal K}\varphi\|_0^2\r),}
 \end{array}\]
 so that 
 \begin{equation}
 \label{br5}
 \sum_{i=1}^\infty\le|\le<G(\cdot)e_i,D_xv(t,\cdot)\r>_E\r|^2 \in\,L^1(0,T;L^1(E,\mu)),\end{equation}
 for any $T>0$.
Now,
  as $D_tv(t,x)=\mathcal K v(t,x)$  (see \cite[Proposition B.2.2]{tesi}), multiplying both sides by $v(t,x)$ and integrating over $E$ with respect to $\mu$, due to  \eqref{e5.21} we get 
 $$
 \begin{array}{l}
\ds \frac12\;\frac{d}{dt}\;\int_Ev^2(t,x)\mu(dx)=\int_E\mathcal Kv(t,x)\,v(t,x)\mu(dx)=-\frac12\;\int_E\sum_{i=1}^\infty\le|\le<G(x)e_i,D_xv(t,x)\r>_E\r|^2\mu(dx).
 \end{array}
 $$
Thus, integrating with respect to $t$, \eqref{e6.5} follows when $\varphi \in\,D({\cal K})$.

Now, assume $\varphi \in\,C^1_b(E)$. Clearly, the mapping  $(t,x)\mapsto P_t\varphi(x)$ is in  $L^\infty(0,T;L^2(E,\mu))$. Moreover, according to \eqref{bie1020bis} 
\begin{equation}
\label{br0}
\begin{array}{l}
\ds{\sum_{i=1}^\infty\le|\le<G(x)e_i,D(P_t\varphi)(x)\r>_E\r|^2\leq c(t)\,P_t(\le|D\varphi(\cdot)\r|_{E^\star}^2)(x)|G(x)|_E^2t^{-\frac 12}}\\
\vs
\ds{\leq c(t)\,\sup_{x \in\,E}|D\varphi(x)|_{E^\star}|G(x)|_E^2\,t^{-\frac 12}.}
\end{array}
\end{equation}
and then \eqref{br5} holds.
Next, for any $n \in\,\nat$ we define
$\varphi_n:=n(n-{\cal K})^{-1}\varphi$. Clearly, $\varphi_n \in\,D({\cal K})$ and for $x \in\,E$
\begin{equation}
\label{br1}
\lim_{n\to \infty}\varphi_n(x)=\varphi(x),\ \ \ \|\varphi_n\|_0\leq \|\varphi\|_0,\ \ \ n \in\,\nat.\end{equation}
Moreover, thanks to \eqref{bie11bis}, we have
\begin{equation}
\label{br2}\lim_{n\to \infty}|D\varphi_n(x)-D\varphi(x)|_{E^\star}=0,\ \ \ \ \sup_{x \in\,E}|D\varphi_n(x)|_{E^\star}\leq \sup_{x \in\,E}|D\varphi(x)|_{E^\star},\ \ \ n \in\,\nat.\end{equation}
As \eqref{e6.5} holds for $\varphi \in\,D({\cal K})$, if we set $v_n(t,x)=P_t\varphi_n(x)$, we have for each $n \in\,\nat$ 
\[  \int_Ev_n^2(t,x)\mu(dx)+\int_0^tds\int_E \sum_{i=1}^\infty\le|\le<G(x)e_i,D_xv_n(s,x)\r>_E\r|^2\,\mu(dx)=\int_H\varphi_n^2(x)\mu(dx).\] 
Due to \eqref{br0}, \eqref{br1} and \eqref{br2}, by arguing as in the proof of Lemma \ref{l5.6}, we can take the limit in both sides above, as $n\to \infty$, and we get \eqref{e6.5} for $\varphi \in\,C^1_b(E)$.

  \end{proof}

\subsection{The Sobolev space $W^{1,2}(E,\mu)$} 

We are going to show that the derivative operator $D$ is closable in $C^1_b(E)$, so that we can introduce the Sobolev space $W^{1,2}(E,\mu)$.

\begin{Proposition}
\label{p6.5} 
Assume Hypotheses \ref{H1}, \ref{H2} and \ref{H3}. Then, the derivative operator
 $$
D:C^1_b(E)\to L^2(E,\mu;E^\star),\quad \varphi\mapsto D\varphi,
$$
is closable.

\end{Proposition}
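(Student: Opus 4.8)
\emph{Plan of proof.} We verify closability directly: if $(\varphi_n)\subset C^1_b(E)$ satisfies $\varphi_n\to 0$ in $L^2(E,\mu)$ and $D\varphi_n\to\Phi$ in $L^2(E,\mu;E^\star)$, we must show $\Phi=0$ $\mu$-a.e. Three ingredients will be combined: the Bismut--Elworthy--Li formula \eqref{bie46bis}; the chain rule $\langle h,D(P_t\varphi)(x)\rangle_E=\E\langle D_xu^x(t)h,D\varphi(u^x(t))\rangle_E$ for $\varphi\in C^1_b(E)$, as already used in the proof of Theorem~\ref{T4.2}; and the invariance of $\mu$, which gives $\int_E\E\,\psi(u^x(t))\,\mu(dx)=\int_E\psi\,d\mu$ for every nonnegative measurable $\psi$ and, in particular, $\int_E\E\,|v(u^x(t))|^2_{E^\star}\,\mu(dx)=\|v\|^2_{L^2(E,\mu;E^\star)}$.

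First I would upgrade the gradient estimate \eqref{bie46} to its $L^2(\mu)$-version. Starting from \eqref{bie46bis}, using the Cauchy--Schwarz and It\^o isometries, Hypothesis~\ref{H2} (which yields $|G^{-1}(y)z|_H\le\beta^{-1}|z|_H$), the $H$-bound \eqref{br25} for $\eta^h$, and $|h|_H\le|h|_E$, one obtains $|D(P_t\varphi)(x)|_{E^\star}\le c\,(t\wedge1)^{-1/2}\bigl(P_t(\varphi^2)(x)\bigr)^{1/2}$ for $\varphi\in C_b(E)$; squaring, integrating in $\mu$ and invoking invariance gives
\[\|D(P_t\varphi)\|_{L^2(E,\mu;E^\star)}\le c\,(t\wedge1)^{-1/2}\,\|\varphi\|_{L^2(E,\mu)},\qquad t>0 .\]
Hence $D(P_t\varphi_n)\to 0$ in $L^2(E,\mu;E^\star)$ for each fixed $t>0$, so $\langle h,D(P_t\varphi_n)(\cdot)\rangle_E\to 0$ in $L^1(E,\mu)$ for every $h\in E$. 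On the other hand, by the chain rule, \eqref{bie11bis} and invariance,
\[\int_E\bigl|\langle h,D(P_t\varphi_n)(x)\rangle_E-\E\langle D_xu^x(t)h,\Phi(u^x(t))\rangle_E\bigr|\,\mu(dx)\le M_2e^{\omega_2 t}\,|h|_E\,\|D\varphi_n-\Phi\|_{L^2(E,\mu;E^\star)}\to 0 .\]
Comparing the two limits yields, for every $h\in E$ and every $t>0$, that $\E\langle D_xu^x(t)h,\Phi(u^x(t))\rangle_E=0$ for $\mu$-a.e. $x$.

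Next I would let $t\downarrow 0$. Restrict $h$ to the domain $D(A_E)$ of the part of $A$ in $E$, so that $|e^{tA}h-h|_E\le t\,|Ah|_E$. Writing the mild form of \eqref{bie32}, $D_xu^x(t)h=e^{tA}h+R^h_x(t)$ with $R^h_x(t)=\int_0^te^{(t-s)A}F^\prime(u^x(s))\eta^h(s)\,ds+\int_0^te^{(t-s)A}G^\prime(u^x(s))\eta^h(s)\,dw(s)$, and estimating $R^h_x$ by means of \eqref{sa3}, \eqref{bie11bis}, the moment bound \eqref{bie2035} and the stochastic-convolution estimate \eqref{bie4}, one finds $\E|R^h_x(t)|^2_E\le c\,t\,(1+|x|_E^{2(m-1)})|h|_E^2$ for $t\le1$, whence $\int_E\E|D_xu^x(t)h-h|^2_E\,\mu(dx)\to0$ as $t\to0$ thanks to \eqref{bie10220}. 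Since also $\int_E\E|\Phi(u^x(t))-\Phi(x)|^2_{E^\star}\,\mu(dx)\to0$ — which follows from invariance after approximating $\Phi$ in $L^2(E,\mu;E^\star)$ by a bounded continuous map $E\to E^\star$, $\Phi$ being essentially separably valued as an $L^2$-limit of the continuous maps $D\varphi_n$ — one obtains $\E\langle D_xu^x(t)h,\Phi(u^x(t))\rangle_E\to\langle h,\Phi(x)\rangle_E$ in $L^1(E,\mu)$ as $t\to0$. Combined with the previous step this gives $\langle h,\Phi(x)\rangle_E=0$ for $\mu$-a.e. $x$, for every $h$ in a countable dense subset of $D(A_E)$; since $D(A_E)$ is dense in $E$ and $h\mapsto\langle h,\Phi(x)\rangle_E$ is continuous, $\Phi(x)=0$ for $\mu$-a.e. $x$, which is the assertion.

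The hard part is precisely this last limit $t\downarrow0$. One cannot simply use that ``$D_xu^x(t)\to I$'', because this convergence holds only in the strong operator topology — $e^{tA}$ is not norm-continuous at $t=0$ on $E$ — and, in addition, the remainder $R^h_x(t)$ involves $F^\prime(u^x(s))$, which is only polynomially bounded in $|u^x(s)|_E$. Both obstructions are bypassed by testing exclusively against directions $h\in D(A_E)$, for which $e^{tA}h\to h$ in $E$ at the explicit rate above, and by carrying out the estimates in $L^1(E,\mu)$ rather than $L^2(E,\mu)$, so that the $x$-dependent constants coming from the polynomial growth of $F^\prime$ are absorbed by the finite moments \eqref{bie10220} of the invariant measure $\mu$.
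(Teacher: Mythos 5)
Your proof is correct, but it follows a genuinely different route from the paper's in its central mechanism. The paper shows that $D\varphi_n\to F$ forces $D(P_s\varphi_n)\to\E[(Du^{\cdot}(s))^*F(u^{\cdot}(s))]$ via the chain rule and invariance (as you also do), but then kills this limit by invoking the {\em carr\'e du champ} identity \eqref{e6.5} for the semigroup: since $\varphi_n\to0$ in $L^2(E,\mu)$, the identity forces $\int_0^t\!\!\int_E\sum_i|\langle G(x)e_i,D(P_s\varphi_n)(x)\rangle_E|^2\,d\mu\,ds\to0$, whence $\E\langle Du^x(t)G(x)e_i,F(u^x(t))\rangle_E=0$; the conclusion $F=0$ is then reached by showing $P_t(\langle G(\cdot)e_i,F\rangle_E)\to0$ in $L^1(E,\mu)$ as $t\to0$, using strong continuity of $P_t$ on $L^1(E,\mu)$, the invertibility of $G$ and the expansion \eqref{br10}. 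You instead bypass Proposition \ref{p6.4} (and hence Theorem \ref{t5.1}) entirely, replacing it with the $L^2(\mu)$-gradient bound $\|D(P_t\varphi)\|_{L^2(E,\mu;E^\star)}\le c\,(t\wedge1)^{-1/2}\|\varphi\|_{L^2(E,\mu)}$, which follows honestly from the Bismut--Elworthy--Li formula \eqref{bie46bis}, Hypothesis \ref{H2}, the bound \eqref{br25} and invariance; this makes the argument more self-contained, since the carr\'e du champ machinery is not needed for closability (though the paper needs it elsewhere anyway). Your handling of the final limit $t\downarrow0$ is also different and slightly more delicate than the paper's: rather than appealing to strong continuity of $P_t$ in $L^1(E,\mu)$ along the specific directions $G(x)e_i$, you test against $h\in D(A_E)$ and prove quantitative convergence $D_xu^x(t)h\to h$ and $\Phi(u^x(\cdot))\to\Phi$ in the appropriate $L^2(\mu)$-senses; you correctly identify and resolve the two genuine obstructions there (lack of norm-continuity of $e^{tA}$ at $t=0$, fixed by restricting to $D(A_E)$, and the polynomial growth of $F'$, absorbed by the moments \eqref{bie10220} of $\mu$), as well as the measurability/separability point needed to approximate $\Phi$ by bounded continuous maps. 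Both routes deliver the same conclusion; yours trades the structural identity \eqref{e6.5} for an explicit smoothing estimate plus a more hands-on limiting argument.
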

\begin{proof}
Let $(\varphi_n)\subset C^1_b(E)$  such that
$$
\varphi_n\to 0\quad\mbox{\rm in}\;  L^2(E,\mu)\quad
D\varphi_n\to F\quad\mbox{\rm in }\;  L^2(E,\mu;E^\star).
$$
We have to show that $F=0$. We first prove that for any $t>0$ we have 
 \begin{equation}
 \label{e6.6}
 \lim_{n\to\infty}D(P_t\varphi_n)(x)=\E[(Du^x(t))^*F(u^x(t))]\quad\mbox{\rm in }\;
L^2(E,\mu;E^\star).
 \end{equation} 
In fact, recalling Theorem \ref{bie34} and \eqref{bie11bis}, we have
$$
\begin{array}{l}
\ds\int_E|D(P_t\varphi_n)(x)-\E(Du^x(t))^*F(u^x(t))|^2_{E^\star}\,\mu(dx)\\
\\
=\int_E|\E\,Du^x(t)^*\le(D\varphi_n(u^x(t))-F(u^x(t))\r)|^2_{E^\star}\,\mu(dx)\\
\\
\ds\leq M\,e^{\omega t}\int_E\E\,|D\varphi_n(u^x(t))-F(u^x(t))|^2_{E^\star}\,\mu(dx)=M e^{\omega t}
\int_E|D\varphi_n(x)-F(x)|^2_{E^\star}\,\mu(dx),
\end{array}
$$
last inequality following from  the invariance of $\mu$. This implies \eqref{e6.6}.

Now, according to \eqref{e6.5} we have
$$
  \int_E(P_t\varphi_n)^2\mu(dx)+\int_0^tds\int_E \sum_{i=1}^\infty |\le<G(x)e_i,D(P_s\varphi_n)(x)\r>_E|^2\,\mu(dx)=\int_H\varphi_n^2(x)\mu(dx).
 $$
 Then,  we can take the limit as $n\to \infty$ in both sides and we get
 \[\lim_{n\to \infty}\int_0^tds\int_E \sum_{i=1}^\infty |\le<G(x)e_i,D(P_s\varphi_n)(x)\r>_E|^2\,\mu(dx)=0.\]
Due to \eqref{e6.6}, this implies that for any $i \in\,\nat$
\[\E\le<Du^x(t)G(x)e_i,F(u^x(t))\r>_E=0,\]
so that  
$$
\begin{array}{l}
P_t(\langle G(x)e_i,F(x)\rangle_E)=\E\langle G(u^x(t))e_i,F(u^x(t))\rangle_E\\
\\
=
\E\langle Du^x(t)G(x)e_i,F(u^x(t))\rangle_E+\E\langle  G(x)e_i-Du^x(t)G(x)e_i,F(u^x(t))\rangle_E\\
\\
+\E\langle  \le(G(u^x(t))-G(x)\r)e_i,F(u^x(t))\rangle_E\\
\\
=\E\langle  G(x)e_i-Du^x(t)G(x)e_i,F(u^x(t))\rangle_E
+\E\langle  \le(G(u^x(t))-G(x)\r)e_i,F(u^x(t))\rangle_E
\end{array}
$$ 
 Consequently, due to the continuity at $t=0$ of $u^x(t)$ and $Du^x(t)$, we get
$$
\lim_{t\to 0}P_t(\langle  G(\cdot)e_i,F\rangle_E)=0,\ \ \ \ \text{in}\ L^1(E,\mu).
$$
Since $P_t$ is a strongly continuous semigroup in $L^1(E,\mu)$, we deduce   $\langle  G(\cdot)e_i,F\rangle_E=0$ for all $i \in\,\nat$. As $G(x)$ is invertible and for any $h \in\,E$
\begin{equation}
\label{br10}
\lim_{n\to \infty}\sum_{i\leq n}\le<h,e_i\r>_He_i=h,\ \ \ \ \text{in}\ E,
\end{equation}
this  implies $\le<h,F(x)\r>_E=0$, for any $x,h \in\,E$, and then $F=0$.
\end{proof}

\medskip

Since $D$ is closable in $L^2(E,\mu)$, we define as usual the Sobolev space $W^{1,2}(E,\mu)$ as the domain of the closure of $D$ endowed with its graph norm. Notice that if  $\{\varphi_n\}\subset C^1_b(E)$ approximates some
 $\varphi \in\,W^{1,2}(E,\mu)$  in the graph norm of $D$, then, according to \eqref{br0}, the series
\[ \int_0^tds\int_E\sum_{i=1}^\infty\le|\le<G(x)e_i,D(P_s\varphi_n)(x)\r>_E\r|^2\,d\mu(x)\]
converges uniformly with respect to $n \in\,\nat$, so that \eqref{e6.5} holds for any $\varphi \in\,W^{1,2}(E,\mu)$.

\begin{Proposition}
Under Hypotheses \ref{H1}, \ref{H2} and \ref{H3}, for any $\varphi\in D(\mathcal K_2)$ we have
  \begin{equation}
  \label{e5.21bis}
  \int_E\mathcal K_2(\varphi)(x)\;\varphi(x)\;d\mu(x)=-\frac12\;\int_E\sum_{i=1}^\infty \le|\le<G(x)e_i,D\varphi(x)\r>_E\r|^2d\mu(x).
  \end{equation} 
 \end{Proposition}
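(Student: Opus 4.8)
The plan is to derive \eqref{e5.21bis} from the identity \eqref{e5.21}, which is already available on $D(\mathcal K)$, by an approximation argument, using two facts: that $D(\mathcal K)$ is a core for $\mathcal K_2$ (Lemma \ref{l5.8}), and that $\mathcal K_2$, being the generator of the strongly continuous contraction semigroup $P_t$ on $L^2(E,\mu)$, is dissipative, so that $\langle\mathcal K_2\psi,\psi\rangle_{L^2(E,\mu)}\leq 0$ for every $\psi\in D(\mathcal K_2)$. First I would set up notation. For $\varphi\in D(\mathcal K)$, Theorem \ref{bie2033} guarantees that $(\langle G(x)e_i,D\varphi(x)\rangle_E)_{i\in\nat}\in\ell^2$ for every $x\in E$; identifying $\ell^2$ with $H$ through the orthonormal basis $\{e_i\}$, this defines a map $\Theta\varphi:E\to H$, and from \eqref{bie10200}, \eqref{bie10220} and the boundedness of $g$ (Hypothesis \ref{H3}) one gets $\Theta\varphi\in L^\infty(E,\mu;H)\subset L^2(E,\mu;H)$ with $\|\Theta\varphi\|^2_{L^2(E,\mu;H)}=\int_E\sum_{i=1}^\infty|\langle G(x)e_i,D\varphi(x)\rangle_E|^2\,d\mu(x)$. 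In this language, and since $\mathcal K\varphi=\mathcal K_2\varphi$ on $D(\mathcal K)$, Proposition \ref{p5.7} reads $\int_E\mathcal K_2\varphi\cdot\varphi\,d\mu=-\frac12\|\Theta\varphi\|^2_{L^2(E,\mu;H)}$. I would also record here that, under Hypothesis \ref{H2}, the multiplication operator $G(x)$ is boundedly invertible both on $E$ and on $H$, with $\|G(x)^{-1}\|_{\mathcal L(H)}\leq\beta^{-1}$, so that $\Theta\varphi(x)=G(x)^\star D\varphi(x)$ and hence $D\varphi(x)=(G(x)^\star)^{-1}\Theta\varphi(x)\in H\subset E^\star$, with $|D\varphi(x)|_{E^\star}\leq|D\varphi(x)|_H\leq\beta^{-1}|\Theta\varphi(x)|_H$ for all $x\in E$.

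Then, given $\varphi\in D(\mathcal K_2)$, I would use Lemma \ref{l5.8} to pick $(\varphi_n)\subset D(\mathcal K)$ with $\varphi_n\to\varphi$ and $\mathcal K_2\varphi_n\to\mathcal K_2\varphi$ in $L^2(E,\mu)$. Applying Proposition \ref{p5.7} to $\varphi_n-\varphi_m\in D(\mathcal K)$ together with the dissipativity of $\mathcal K_2$ gives
\[\|\Theta\varphi_n-\Theta\varphi_m\|^2_{L^2(E,\mu;H)}=-2\langle\mathcal K_2(\varphi_n-\varphi_m),\varphi_n-\varphi_m\rangle_{L^2(E,\mu)}\leq 2\|\mathcal K_2(\varphi_n-\varphi_m)\|_{L^2(E,\mu)}\,\|\varphi_n-\varphi_m\|_{L^2(E,\mu)},\]
which tends to $0$, so $\Theta\varphi_n\to\Psi$ in $L^2(E,\mu;H)$ for some $\Psi$. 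By the bound $|D\varphi_n(x)-D\varphi_m(x)|_{E^\star}\leq\beta^{-1}|\Theta\varphi_n(x)-\Theta\varphi_m(x)|_H$ of the first step, $(D\varphi_n)$ is Cauchy in $L^2(E,\mu;E^\star)$ as well, and therefore, by the closability of $D$ (Proposition \ref{p6.5}), $\varphi\in W^{1,2}(E,\mu)$ and $D\varphi_n\to D\varphi$ in $L^2(E,\mu;E^\star)$. Passing to a subsequence along which $\Theta\varphi_n\to\Psi$ and $D\varphi_n\to D\varphi$ hold $\mu$-a.e., and using that $G(x)^\star\in\mathcal L(E^\star)$ and that convergence in $H$ implies convergence in $E^\star$, I would identify $\Psi(x)=G(x)^\star D\varphi(x)$ for $\mu$-a.e.\ $x$, whence $\langle G(x)e_i,D\varphi(x)\rangle_E=\langle\Psi(x),e_i\rangle_H$ and $\sum_{i=1}^\infty|\langle G(x)e_i,D\varphi(x)\rangle_E|^2=|\Psi(x)|_H^2$ $\mu$-a.e.

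The last step is to pass to the limit in the identity $\int_E\mathcal K_2\varphi_n\cdot\varphi_n\,d\mu=-\frac12\|\Theta\varphi_n\|^2_{L^2(E,\mu;H)}$. The left-hand side converges to $\int_E\mathcal K_2\varphi\cdot\varphi\,d\mu$ because $\mathcal K_2\varphi_n\to\mathcal K_2\varphi$ and $\varphi_n\to\varphi$ in $L^2(E,\mu)$, while the right-hand side converges to $-\frac12\|\Psi\|^2_{L^2(E,\mu;H)}=-\frac12\int_E\sum_{i=1}^\infty|\langle G(x)e_i,D\varphi(x)\rangle_E|^2\,d\mu(x)$, and \eqref{e5.21bis} follows. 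I expect the delicate point to be the middle step: converting the $L^2(E,\mu;H)$-Cauchy property of $(\Theta\varphi_n)$ into the $L^2(E,\mu;E^\star)$-Cauchy property of $(D\varphi_n)$ — which is precisely what makes the closure $D\varphi$, and hence the right-hand side of \eqref{e5.21bis}, meaningful — and this relies crucially on the uniform invertibility of $G$ granted by Hypothesis \ref{H2}; a secondary subtlety is the $\mu$-a.e.\ identification of the $H$-limit $\Psi$ with $G^\star D\varphi$, which I would handle by extracting a.e.-convergent subsequences as indicated above.
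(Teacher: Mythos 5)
Your proof is correct and takes essentially the same route as the paper: the paper's own proof of this proposition is a one-line appeal to Lemma \ref{l5.8} (that $D(\mathcal K)$ is a core for $\mathcal K_2$) and Proposition \ref{p6.5} (closability of $D$), and your argument is exactly the detailed implementation of that appeal --- approximating by a core sequence, applying \eqref{e5.21} to differences to get the Cauchy property of $G^\star D\varphi_n$ in $L^2(E,\mu;H)$, and using the uniform lower bound $\beta$ on $g$ together with closability to identify the limit with $D\varphi$. Nothing needs to change.
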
 
 \begin{proof}
 It follows from Lemma \ref{l5.8} and Proposition \ref{p6.5}.
 \end{proof}

 \subsection{The Poincar\'e inequality}

As a consequence of Hypothesis \ref{H3}, if we take $\a$ large enough, by proceeding as in Lemma \ref{Lbr24}  we have that there exists some $\theta>0$ such that
 \begin{equation}
 \label{e6.9}
 |DP_t\varphi(x)|_E\leq e^{-\theta t}\sup_{x \in\,E}|D\varphi(x)|_{E^\star}.
   \end{equation} 
By a standard argument this implies that for any $x \in\,E$
 \begin{equation}
 \label{e6.10}
 \lim_{t\to\infty}P_t\varphi(x)=\overline\varphi=\int_E\varphi d\mu.
 \end{equation}

 \begin{Proposition}
 \label{P6.7}
 Under Hypotheses \ref{H1}, \ref{H2} and \ref{H3},
there exist $\rho>0$ such that for all $\varphi\in W^{1,2}(E,\mu)$
 \begin{equation}
 \label{e6.12}
 \int_E|\varphi(x)-\overline\varphi|^2d\mu(x)\leq\rho\int_E|D\varphi(x)|_{E^\star}^2d\mu(x).
 \end{equation} 

 \end{Proposition}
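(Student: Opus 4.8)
The plan is to derive the Poincar\'e inequality from the energy identity \eqref{e6.5} by letting $t\to+\infty$, and then to estimate the resulting time integral of the carr\'e du champs by means of the exponentially decaying gradient bound \eqref{bie1020tris}. I would first prove \eqref{e6.12} for $\varphi\in C^1_b(E)$ and then pass to $\varphi\in W^{1,2}(E,\mu)$ by density.

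\emph{Step 1: the variance identity.} For $\varphi\in C^1_b(E)$, \eqref{e6.5} reads
\[\int_E(P_t\varphi)^2\,d\mu+\int_0^t\!ds\int_E\sum_{i=1}^\infty\le|\le<G(x)e_i,D(P_s\varphi)(x)\r>_E\r|^2\,d\mu=\int_E\varphi^2\,d\mu,\qquad t>0.\]
By \eqref{e6.9}--\eqref{e6.10} we have $P_t\varphi(x)\to\overline\varphi$ for every $x\in E$, and $|P_t\varphi(x)|\le\|\varphi\|_0$, so dominated convergence on the probability space $(E,\mu)$ gives $\int_E(P_t\varphi)^2\,d\mu\to\overline\varphi^{\,2}$. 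Since the integrand of the double integral is nonnegative, the double integral is nondecreasing in $t$; letting $t\to+\infty$ I obtain
\[\int_E|\varphi-\overline\varphi|^2\,d\mu=\int_0^{+\infty}\!ds\int_E\sum_{i=1}^\infty\le|\le<G(x)e_i,D(P_s\varphi)(x)\r>_E\r|^2\,d\mu .\]

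\emph{Step 2: estimating the right-hand side.} Under Hypothesis \ref{H3}, taking $\a$ large, the positivity and comparison arguments of Lemma \ref{Lbr24} and Theorem \ref{T4.2} deliver exponential decay (the same mechanism that yields \eqref{e6.9}), so that \eqref{bie1020tris} applies: for some $\d>0$,
\[\sum_{i=1}^\infty\le|\le<G(x)e_i,D(P_s\varphi)(x)\r>_E\r|^2\leq c\,e^{-\d s}\,s^{-\frac12}\,P_s\!\le(|D\varphi(\cdot)|^2_{E^\star}\r)(x)\,|G(x)|^2_E,\qquad s>0.\]
Since $g$ is uniformly bounded (Hypothesis \ref{H3}), $|G(x)|^2_E\le\|g\|^2_\infty$; integrating over $E$ against $\mu$ and using the invariance of $\mu$, i.e.\ $\int_E P_s(|D\varphi(\cdot)|^2_{E^\star})\,d\mu=\int_E|D\varphi|^2_{E^\star}\,d\mu$, I get
\[\int_E\sum_{i=1}^\infty\le|\le<G(x)e_i,D(P_s\varphi)(x)\r>_E\r|^2\,d\mu\leq c\,\|g\|^2_\infty\,e^{-\d s}\,s^{-\frac12}\int_E|D\varphi(x)|^2_{E^\star}\,d\mu .\]
Integrating in $s$ over $(0,+\infty)$ and using $\int_0^{+\infty}e^{-\d s}s^{-1/2}\,ds=\sqrt{\pi/\d}<\infty$ then yields \eqref{e6.12} for every $\varphi\in C^1_b(E)$, with $\rho:=c\,\|g\|^2_\infty\sqrt{\pi/\d}$.

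\emph{Step 3: density.} For $\varphi\in W^{1,2}(E,\mu)$ I would choose $\{\varphi_n\}\subset C^1_b(E)$ with $\varphi_n\to\varphi$ in $L^2(E,\mu)$ and $D\varphi_n\to D\varphi$ in $L^2(E,\mu;E^\star)$. Then $\overline{\varphi_n}\to\overline\varphi$, and passing to the limit in \eqref{e6.12} written for $\varphi_n$ gives the inequality for $\varphi$. The step I expect to be the real obstacle is the exponential decay in $s$ used in Step 2: one must check that, under Hypothesis \ref{H3} with $\a$ large, the estimates of Lemma \ref{Lbr24} and Theorem \ref{T4.2} hold with the decaying factor $e^{-\d s}$ (with $\a$ playing the role of the negative constant $\la$ of \eqref{h12}), rather than merely with the continuous increasing constant $c(s)$ of \eqref{bie1020bis}; granting this, the rest --- the limit $t\to+\infty$, the invariance of $\mu$, and the elementary integral $\int_0^{+\infty}e^{-\d s}s^{-1/2}\,ds$ --- is routine.
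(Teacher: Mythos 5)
Your proposal is correct and follows essentially the same route as the paper: both start from the energy identity \eqref{e6.5}, use the exponentially decaying bound \eqref{bie1020tris} together with the invariance of $\mu$ and the boundedness of $g$ to control the carr\'e du champs term, and let $t\to\infty$ via \eqref{e6.10} (the paper bounds the integrand before passing to the limit, you pass to the limit in the identity first — an immaterial difference). Your closing caveat about the applicability of \eqref{bie1020tris} under Hypothesis \ref{H3} is a fair point that the paper glosses over, but it does not change the argument.
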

\begin{proof}
We start from \eqref{e6.5} for $\varphi \in\,W^{1,2}(E,\mu)$,
  \[
  \int_E(P_t \varphi)^2(x) \mu(dx)+\int_0^tds\int_E \sum_{i=1}^\infty\le|\le<G(x)e_i,D(P_s\varphi)(x)\r>_E\r|^2\,\mu(dx)
  =\int_H\varphi^2(x)\mu(dx).
  \]
Taking into account of \eqref{bie1020tris}, this  yields
 $$
  \int_E(P_t \varphi)^2 \mu(dx)+c\int_0^tds\, e^{-\d s}s^{-\frac 12}\int_E P_s(|D\varphi(\cdot)|_{E^\star}^2)(x)\,\mu(dx)\geq \int_H\varphi^2(x)\mu(dx),
 $$
  which, by the invariance of $\mu$, yields
 $$
  \int_E(P_t \varphi)^2 \mu(dx)+c\int_0^tds\, e^{-\d s}s^{-\frac 12}\int_E |D\varphi(x)|_{E^\star}^2\,\mu(dx)\geq \int_H\varphi^2(x)\mu(dx),
 $$
  Letting $t\to \infty$, and recalling \eqref{e6.10}, this implies that for some $\rho>0$
  $$
  ( \overline \varphi)^2 +\rho\int_E |D\varphi(x)|_{E^\star}^2\,\mu(dx)\geq \int_H\varphi^2(x)\mu(dx),
  $$
  which is equivalent to \eqref{e6.12}.
\end{proof}

\subsection{Spectral gap and convergence to equilibrium}

\begin{Proposition}
\label{p6.7}
Under Hypotheses \ref{H1}, \ref{H2} and \ref{H3}, we have
$$
\sigma (\mathcal K_2)\backslash \{0\}\subset \left\{ \lambda \in \mathbb{C}:\;\frak{Re}\lambda \leq - \beta^2/\rho 
\right\},
$$
where $\sigma (\mathcal K_2)$ denotes the spectrum of  $\mathcal K_2$.

\end{Proposition}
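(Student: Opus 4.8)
The plan is to derive the spectral gap from the energy identity \eqref{e6.5}, the lower bound $|g|\geq\beta$ of Hypothesis~\ref{H2}, and the Poincar\'e inequality \eqref{e6.12}. First I would work on the zero-mean subspace $L^2_0(E,\mu):=\{\varphi\in L^2(E,\mu):\int_E\varphi\,d\mu=0\}$. By the invariance of $\mu$ one has $\int_E P_t\varphi\,d\mu=\int_E\varphi\,d\mu$, so $P_t$ preserves means and leaves the closed subspace $L^2_0(E,\mu)$ invariant; since $\mathcal K_2\,1=0$, the decomposition $L^2(E,\mu)=\reals\cdot 1\oplus L^2_0(E,\mu)$ reduces $\mathcal K_2$ and yields $\sigma(\mathcal K_2)=\{0\}\cup\sigma\big(\mathcal K_2|_{L^2_0(E,\mu)}\big)$. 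Hence it suffices to show that $P_t$, restricted to $L^2_0(E,\mu)$, decays exponentially in $L^2(E,\mu)$ at a rate governed by $\beta^2/\rho$, which then controls the growth bound, and a fortiori the spectral bound, of the restricted generator.

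The one estimate that needs a genuine argument is the pointwise inequality
\[
\sum_{i=1}^\infty|\langle G(x)e_i,D(P_t\varphi)(x)\rangle_E|^2\ \geq\ \beta^2\,|D(P_t\varphi)(x)|_{E^\star}^2,\qquad t>0,\ \ x\in E,\ \ \varphi\in C_b(E).
\]
By Theorem~\ref{T4.2}, for $t>0$ the functional $h\mapsto\langle G(x)h,D(P_t\varphi)(x)\rangle_E$ is represented by some $\Lambda(t,x)\in H$, so the left-hand side equals $|\Lambda(t,x)|_H^2$. Since $\inf|g|=\beta>0$, the multiplication operator $G^{-1}(x)$ is bounded on $E$ and on $H$ with norm at most $\beta^{-1}$, and writing $h=G(x)\,G^{-1}(x)h$ we obtain, for $h\in E$, $|\langle h,D(P_t\varphi)(x)\rangle_E|=|\langle\Lambda(t,x),G^{-1}(x)h\rangle_H|\leq\beta^{-1}|\Lambda(t,x)|_H\,|h|_E$, whence $|D(P_t\varphi)(x)|_{E^\star}\leq\beta^{-1}|\Lambda(t,x)|_H$, which is the claim.

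Next I would take $\varphi\in C^1_b(E)$, so that $P_t\varphi\in C^1_b(E)$ for every $t\geq 0$ by Theorem~\ref{bie45} and identity \eqref{e6.5} holds by Proposition~\ref{p6.4}. Putting $\overline\varphi=\int_E\varphi\,d\mu$ and $\psi(t):=\int_E(P_t\varphi)^2\,d\mu-\overline\varphi^{\,2}=\int_E\big(P_t\varphi-\overline\varphi\big)^2\,d\mu$ (using $P_t\overline\varphi=\overline\varphi$ and orthogonality), differentiating \eqref{e6.5}, and then invoking the pointwise inequality above together with the Poincar\'e inequality \eqref{e6.12} applied to $P_t\varphi$ (for which $\overline{P_t\varphi}=\overline\varphi$), one gets
\[
\psi'(t)=-\int_E\sum_{i=1}^\infty|\langle G(x)e_i,D(P_t\varphi)(x)\rangle_E|^2\,d\mu(x)\ \leq\ -\beta^2\int_E|D(P_t\varphi)(x)|_{E^\star}^2\,d\mu(x)\ \leq\ -\frac{\beta^2}{\rho}\,\psi(t).
\]
By Gronwall's lemma $\psi(t)\leq e^{-(\beta^2/\rho)t}\psi(0)$, that is $\|P_t\varphi-\overline\varphi\|_{L^2(E,\mu)}^2\leq e^{-(\beta^2/\rho)t}\|\varphi-\overline\varphi\|_{L^2(E,\mu)}^2$. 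Since $C^1_b(E)$ is dense in $L^2(E,\mu)$ (for instance via the Ornstein--Uhlenbeck regularization $R_t$ employed in the proof of Theorem~\ref{t5.1}), this estimate passes to every $\varphi\in L^2(E,\mu)$, so $P_t|_{L^2_0(E,\mu)}$ is an exponentially decaying $C_0$-semigroup on $L^2_0(E,\mu)$. As the spectral bound of the generator of a $C_0$-semigroup never exceeds its growth bound, this yields $\sigma\big(\mathcal K_2|_{L^2_0(E,\mu)}\big)\subset\{\lambda\in\mathbb{C}:\frak{Re}\,\lambda\leq-\beta^2/\rho\}$, and combined with the spectral splitting of the first step we obtain the assertion.

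Most of the above is a repackaging of results already at hand, so there is no deep obstacle. The points requiring care are: the pointwise lower bound by $\beta^2$, whose essential ingredient---that the twisted gradient $h\mapsto\langle G(\cdot)h,D(P_t\varphi)\rangle_E$ is realized by an $H$-vector for $t>0$---is exactly what Theorem~\ref{T4.2} provides, and which genuinely uses Hypothesis~\ref{H2}; the availability of \eqref{e6.5} along the whole semigroup orbit, which for $\varphi\in C^1_b(E)$ is Proposition~\ref{p6.4} (and which extends to $W^{1,2}(E,\mu)$ by the remark following Proposition~\ref{p6.5}); and the standard passage from the $L^2$-contraction estimate on the invariant subspace $L^2_0(E,\mu)$ to the spectral inclusion via the growth-bound/spectral-bound comparison for $C_0$-semigroups.
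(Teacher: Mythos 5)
Your proof is correct and rests on the same three pillars as the paper's: the reduction to the mean-zero invariant subspace, the pointwise lower bound $\sum_i|\langle G(x)e_i,D\varphi(x)\rangle_E|^2\geq\beta^2|D\varphi(x)|_{E^\star}^2$, and the Poincar\'e inequality. The only real divergence is in the packaging of the final step: the paper works at the level of the generator, combining the \emph{carr\'e des champs} identity \eqref{e5.21bis} with Poincar\'e to get $\int_E\mathcal K_\pi\varphi\,\varphi\,d\mu\leq-\tfrac{\beta^2}{2\rho}\int_E\varphi^2d\mu$ and then invoking dissipativity of $\mathcal K_\pi+\tfrac{\beta^2}{2\rho}I$ (Lumer--Phillips/Hille--Yosida), whereas you work at the level of the semigroup, differentiating the energy identity \eqref{e6.5}, applying Gronwall, and passing from the growth bound to the spectral bound. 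These are essentially equivalent; indeed the paper carries out your semigroup-level computation in the Remark that follows the Proposition, in order to deduce exponential convergence to equilibrium. Your derivation of the $\beta^2$ lower bound via the representing vector $\Lambda(t,x)\in H$ from Theorem \ref{T4.2} and the bound $\|G^{-1}(x)\|\leq\beta^{-1}$ is slightly cleaner than the paper's Cauchy--Schwarz expansion, and it makes explicit where Hypothesis \ref{H2} enters.

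One small quantitative slip, which you share with the statement of the Proposition itself: from $\|P_t\varphi-\overline\varphi\|_{L^2(\mu)}^2\leq e^{-(\beta^2/\rho)t}\|\varphi-\overline\varphi\|_{L^2(\mu)}^2$ the operator norm of $P_t$ on the mean-zero subspace is bounded by $e^{-\beta^2 t/(2\rho)}$, so the growth bound, and hence the spectral bound, that your argument actually delivers is $-\beta^2/(2\rho)$, not $-\beta^2/\rho$. The paper's own proof concludes with exactly the same constant $-\beta^2/(2\rho)$ in its final display, so the factor of two is an inconsistency in the stated Proposition rather than a defect of your argument; you should simply state the bound your estimate supports.
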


 \begin{proof} Let us consider the space of all mean zero  functions from $L^{2}(E,\mu )$
\begin{displaymath}
L^{2}_{\pi}(E,\mu ):=\left\{\varphi \in L^{2}(E,\mu ):\; \overline{\varphi}=0 \right\}.
\end{displaymath}
Clearly
$$
L^{2}(E,\mu )=L^{2}_{\pi}(E,\mu )\oplus \reals.
$$
Moreover if $\overline{\varphi}=0$ we have by the invariance of $\mu$
$$\overline{(P_t\varphi)}=\int_HP_t\varphi(x)d\mu(x)=\int_H\varphi(x) d\mu(x)=0,
$$
so that $L^{2}_{\pi}(E,\mu )$ is an invariant subspace of $P_{t}.$

Denote by $\mathcal  K_\pi$ the restriction of $ \mathcal K_2$ to  $L^{2}_{\pi}(E,\mu )$. Then
we have clearly
$$
\sigma (\mathcal K_2)=\{0\}\cup \sigma (\mathcal  K_\pi).
$$
Moreover, if $\varphi \in L^{2}_{\pi}(E,\mu )$ we see, using  \eqref{e5.21}, that
\begin{equation}
\label{e3.4m}
\begin{array}{l}
\ds\int_{E}^{}\mathcal K_{\pi}\varphi(x) \;\varphi(x) \;d\mu(x)=\int_{E}^{}\mathcal K_2\varphi(x) \;\varphi(x) \;d\mu(x)
=-\frac{1}{2}\;\int_{E}\sum_{i=1}^\infty \le|\le<G(x)e_i,D\varphi(x)\r>_E \r|^{2}d\mu(x),
\end{array}
\end{equation}
Now, due to \eqref{br10}, for any $x, h \in\,E$ we have
\[\begin{array}{l}
\ds{\le|\le<G(x)h,D\varphi(x)\r>_E\r|^2=\le(\sum_{i=1}^\infty\le<G(x)e_i,D\varphi(x)\r>_E \le<h,e_i\r>_H\r)^2}\\
\vs
\ds{\leq \sum_{i=1}^\infty \le|\le<h,e_i\r>_H\r|^2\sum_{i=1}^\infty \le|\le<G(x)e_i,D\varphi(x)\r>_E\r|^2}
\end{array}\]
so that, as $|h|_E\leq 1$ implies $|h|_H\leq 1$,
\[|G^\star(x)D\varphi(x)|^2_{E^\star}\leq \sum_{i=1}^\infty \le|\le<G(x)e_i,D\varphi(x)\r>_E\r|^2.\]
Due to Hypothesis \ref{H2}, according to \eqref{e3.4m} this yields
\[\int_{E}^{}\mathcal K_{\pi}\varphi \;\varphi \;d\mu\leq -\frac {\beta^2}2\int_E|D\varphi(x)|_{E^\star}\,d\mu(x)\]
and by the Poincar\'e's inequality, we deduce
\begin{equation}
\label{e5m}
\begin{array}{l}
\ds\int_{E}^{}\mathcal K_{\pi }\varphi(x) \;\varphi(x) \;d\mu(x)
\leq -\frac{\beta^2}{2}\;\int_{E}^{}|D\varphi(x) |_{E^\star}^{2}d\mu(x) \leq -\frac{\beta^2}{2\rho}
\int_{E}^{}\varphi ^{2}(x)d\mu(x),
\end{array}
\end{equation}
which yields by the Hille--Yosida Theorem.
\begin{displaymath}
\sigma ( {\cal K}_{\pi })\subset \left\{ \lambda \in \C:\;\frak{Re}\,\lambda \leq -\beta^2 /2\rho  \right\}. \quad
\end{displaymath}
\end{proof}

 \begin{Remark}
{\em  The spectral gap implies the exponential convergence of $P_{t}\varphi $ to
 $\overline\varphi$. In fact  from
$$
\int_{E}^{}\mathcal K_{\pi }\varphi(x) \;\varphi(x) \;d\mu(x) \leq -\frac{\beta^2 }{2\rho}
\int_{E}^{}\varphi ^{2}(x)d\mu(x),
$$
we deduce that  $\mathcal K_{\pi }+\beta^2 /2\rho\,I$ is $m$-dissipative, so that
by  the Hille--Yosida Theorem we have
\begin{equation}
\label{e3.6m}
\int_E |P_t\psi(x)|^2d\mu(x)\leq e^{-\frac{\beta^2}\rho t}\int_E |\psi(x)|^2d\mu(x),\ \ \ \ \psi\in L^2_\pi(E,\mu).
\end{equation}
 Now given $\varphi\in L^2_\pi(E,\mu)$, setting in \eqref{e3.6m} $\psi:=\varphi-\overline\varphi$, we get
$$
\begin{array}{l}
\ds\int_E |P_t\varphi(x)-\overline\varphi|^2d\mu(x)\leq e^{-\frac{\beta^2}\rho t}\int_E |\varphi(x)-\overline\varphi|^2d\mu(x)\\
\\
\ds=e^{-\frac{\beta^2}\rho t}\left(\int_H \varphi^2(x)d\mu(x)-\overline\varphi^2\right)\leq e^{-\frac{\beta^2}\rho t}\int_E |\varphi(x)|^2d\mu(x).
\end{array}
$$}

$\Box$
\end{Remark}
 
\appendix
 
 \section{An It\^o formula in the space of continuos functions}

Fix $k\in\mathbb N$ and let  $b,\sigma_1,....\sigma_k$   be mappings from $H$ into $H$ and from $E$ into $E$,  which are  Lipschitz continuous    both in $H$ and in $E$. Let $X$ be the solution to  the stochastic differential equation
 \begin{equation}
 \label{eA.1}
 X(t)=x+\int_0^t b(X(s))ds+\sum_{i=1}^k\int_0^t\sigma_i(X(s))d\beta_i(s),
 \end{equation}
 where $\beta_1,...,\beta_n$ are independent real Brownian motions.\bigskip

   If $\varphi\in C^2_b(H)$, then it is well known that    the following It\^o's formula holds
      \begin{equation}
  \label{eA.2}
  \E \varphi(X(t))=\varphi(x)+\E\int_0^t \mathcal L\varphi(X(s))ds,
  \end{equation} 
  where  $\mathcal L$ is  the Kolmogorov operator  given by  
    \begin{equation}
  \label{eA.3}
\mathcal L\varphi(x)=\frac12\;\sum_{i=1}^k \langle D^2\varphi(x)\sigma_i(x),\sigma_i(x)\rangle_H+\langle D\varphi(x),b(x)\rangle_H,\ \ \ x \in\,H.
  \end{equation} \bigskip

Now we see what happens when dealing with \eqref{eA.2}  for functions defined in $E$.
  \begin{Proposition}
\label{pA.1}
  If $\varphi \in\,C^2_b(E)$, then it holds
 \begin{equation}
  \label{eA.4}  \E\varphi(X(t))=\varphi(x)+\E\int_0^t \mathcal L_E\varphi(X(s))ds,
  \end{equation} 
  where   $\mathcal L_E$ is given by
    \begin{equation}
  \label{eA.5}
 \mathcal L_E\varphi(x)=\frac12\;\sum_{i=1}^k \le<\sigma_i(x),D^2\varphi(x)\sigma_i(x)\r>_E+\le<b(x),D\varphi(x)\r>_E,
  \end{equation} 
  and $D_E$ represents the Fr\`echet derivative in $E$.\bigskip

\end{Proposition}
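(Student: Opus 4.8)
The plan is to obtain \eqref{eA.4} by the classical time--discretisation proof of It\^o's formula, adapted to the Banach space $E$ by exploiting two special features of the present setting: the driving noise is finite dimensional, so that the $E$--valued stochastic integrals in \eqref{eA.1} are genuine (Bochner--type) integrals enjoying good $L^p(\Omega;E)$ bounds; and the sample paths $s\mapsto X(s)$ are continuous in $E$, hence almost surely have relatively compact range there, so that (by Mazur's theorem) $D\varphi$ and $D^2\varphi$ are uniformly continuous on the compact convex hull of the path. First I would recall that under the standing hypotheses \eqref{eA.1} has a unique continuous $E$--valued adapted solution with $\E\sup_{s\le t}|X(s)|^p_E<\infty$ for every $p\ge1$ (Lipschitz coefficients, finite dimensional noise), so that all the moments below are finite. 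Then, fixing $t>0$, $x\in E$, and partitions $0=t^n_0<\dots<t^n_{N_n}=t$ with mesh tending to $0$, I would write the telescoping sum and apply the second order Taylor formula for $\varphi\in C^2_b(E)$,
\[
\varphi(X(t))-\varphi(x)=\sum_j\Big[\langle\Delta_jX,D\varphi(X(t^n_j))\rangle_E+\tfrac12\,D^2\varphi(X(t^n_j))(\Delta_jX,\Delta_jX)+\rho^n_j\Big],
\]
with $\Delta_jX=X(t^n_{j+1})-X(t^n_j)$ and $|\rho^n_j|\le\tfrac12\,\omega^n_j\,|\Delta_jX|^2_E$, where $\omega^n_j$ denotes the oscillation of $D^2\varphi$ between $X(t^n_j)$ and $X(t^n_{j+1})$.

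Next I would pass to the limit in the three sums, taking expectations throughout. Split $\Delta_jX=D^n_j+M^n_j$ with $D^n_j=\int_{t^n_j}^{t^n_{j+1}}b(X(s))\,ds$ and $M^n_j=\sum_{i=1}^k\int_{t^n_j}^{t^n_{j+1}}\sigma_i(X(s))\,d\beta_i(s)$. In the first order sum the martingale contribution has zero expectation: applying the bounded functional $D\varphi(X(t^n_j))\in E^\star$ to the $E$--valued integral $M^n_j$ turns $\langle M^n_j,D\varphi(X(t^n_j))\rangle_E$, conditionally on $\F_{t^n_j}$, into real It\^o integrals over $[t^n_j,t^n_{j+1}]$ whose $\F_{t^n_j}$--conditional expectation vanishes; the drift contribution $\E\sum_j\langle D^n_j,D\varphi(X(t^n_j))\rangle_E$ converges to $\E\int_0^t\langle b(X(s)),D\varphi(X(s))\rangle_E\,ds$ by continuity of $X$, $b$, $D\varphi$ (the last being Lipschitz from $E$ to $E^\star$) and dominated convergence. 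In the second order sum the $D^n_jD^n_j$ and $D^n_jM^n_j$ terms vanish in the limit, thanks to $\sum_j|D^n_j|_E^2\le\big(\max_j\int_{t^n_j}^{t^n_{j+1}}|b(X)|_E\big)\int_0^t|b(X)|_E\to0$, Cauchy--Schwarz, and the bound $\E|M^n_j|_E^2\le C\,(t^n_{j+1}-t^n_j)$. The decisive term is $\tfrac12\sum_j\E\,D^2\varphi(X(t^n_j))(M^n_j,M^n_j)$: here I would replace $M^n_j$ by the ``frozen'' increment $\sum_i\sigma_i(X(t^n_j))\big(\beta_i(t^n_{j+1})-\beta_i(t^n_j)\big)$, the error being $o\big((t^n_{j+1}-t^n_j)^{1/2}\big)$ in $L^2(\Omega;E)$ uniformly in $j$ (Lipschitz continuity of the $\sigma_i$ and continuity of $X$ in $L^p(\Omega;E)$); after this substitution the coefficient $D^2\varphi(X(t^n_j))(\sigma_i(X(t^n_j)),\sigma_{i'}(X(t^n_j)))$ is $\F_{t^n_j}$--measurable and independent of the Brownian increments, so multiplying by $\E[(\beta_i(t^n_{j+1})-\beta_i(t^n_j))(\beta_{i'}(t^n_{j+1})-\beta_{i'}(t^n_j))]=\delta_{ii'}(t^n_{j+1}-t^n_j)$ leaves exactly $\sum_j\sum_i\E\,D^2\varphi(X(t^n_j))(\sigma_i(X(t^n_j)),\sigma_i(X(t^n_j)))(t^n_{j+1}-t^n_j)$, which converges to $\E\int_0^t\sum_i D^2\varphi(X(s))(\sigma_i(X(s)),\sigma_i(X(s)))\,ds$. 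Finally $\E\sum_j|\rho^n_j|\to0$, because $\omega^n_j\le2\|D^2\varphi\|_0$ and $\max_j\omega^n_j\to0$ a.s.\ (uniform continuity of $D^2\varphi$ on the compact convex hull of the path), while $\sum_j|\Delta_jX|_E^2$ is bounded in $L^2(\Omega)$ uniformly in $n$. Collecting the limits yields \eqref{eA.4}--\eqref{eA.5}.

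The step I expect to be the genuine obstacle — the only point where the argument departs from the Hilbertian proof of \eqref{eA.2}, in which the It\^o isometry does all the work — is establishing the two $E$--valued stochastic integral estimates used above: the bound $\E\big\|\int_s^{s'}\sigma_i(X(r))\,d\beta_i(r)\big\|_E^p\le C_p\,(s'-s)^{p/2}\sup_{r\le t}\E\|\sigma_i(X(r))\|_E^p$, and its companion $\E\big\|\int_s^{s'}(\sigma_i(X(r))-\sigma_i(X(s)))\,d\beta_i(r)\big\|_E^p=o\big((s'-s)^{p/2}\big)$, for $p$ large. As in \cite[Theorem 4.2]{cerrai}, these are obtained by a stochastic factorisation argument together with the Sobolev embedding $W^{\theta,2p}(0,1)\hookrightarrow C_0([0,1])$ for $\theta>1/(2p)$ and the available moment bounds on $\sup_{s\le t}|X(s)|_E$; they simultaneously yield the vanishing of the mixed second order terms and justify the replacement of $M^n_j$ by its frozen version, which is what reduces the quadratic--variation computation to the elementary finite dimensional one.
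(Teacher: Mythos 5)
Your argument takes a genuinely different route from the paper: you rebuild It\^o's formula from scratch in $E$ by time discretisation, whereas the paper never does any stochastic calculus in $E$ at all. Its proof is a short reduction: by Lemma \ref{Lbr21} one picks $\varphi_n\in C^2_b(H)$ (namely $\varphi_n(x)=\varphi(x_n)$ with $x_n$ a mollification of $x$) with $\varphi_n\to\varphi$, $\langle y,D\varphi_n(x)\rangle_H\to\langle y,D\varphi(x)\rangle_E$ and $\langle y,D^2\varphi_n(x)y\rangle_H\to\langle y,D^2_E\varphi(x)y\rangle_E$ pointwise on $E$, together with bounds uniform in $n$; one applies the classical Hilbertian formula \eqref{eA.2} to each $\varphi_n$ (legitimate, since $X$ is also the $H$-valued solution of \eqref{eA.1}) and passes to the limit by dominated convergence, using only that $X$ has paths in $E$ with finite moments. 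All the delicate analysis is thus pushed into $H$, where the It\^o isometry is available.

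The step you yourself single out as the crux is, I believe, a genuine gap as you propose to close it. Your scheme needs the sup-norm moment bound $\E\bigl\|\int_s^{s'}\sigma_i(X(r))\,d\beta_i(r)\bigr\|_E^p\leq C_p(s'-s)^{p/2}\sup_r\E\|\sigma_i(X(r))\|_E^p$ (and its ``frozen-coefficient'' refinement), and you cannot avoid it, because $D^2\varphi(X(t^n_j))$ is a bounded bilinear form on $E$ only, so both the quadratic-variation term and the Taylor remainder must be controlled in the $E$-norm of the increments. But $E=C_0([0,1])$ contains $c_0$ and is not of martingale type $2$, so no such inequality holds for general continuous adapted $E$-valued integrands (already for deterministic $c_0$-valued step integrands the type-$2$ inequality fails). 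The justification you offer --- stochastic factorisation plus the embedding $W^{\theta,2p}(0,1)\hookrightarrow C_0([0,1])$ ``as in Theorem 4.2 of \cite{cerrai}'' --- does not transfer: in that theorem the spatial regularity needed for the Sobolev embedding is manufactured by the analytic semigroup $e^{(t-s)A}$ sitting inside the integrand, while in \eqref{eA.1} (and in the application \eqref{e5.5}, where the generator is the bounded Yosida approximation $A_k$) there is no smoothing factor, and the integrand $\sigma_i(X(r))$ is merely continuous in $\xi$, with no quantitative modulus. So either you must exploit the specific Nemytskii structure of the coefficients and prove spatial regularity of $X$ first (which is itself circular for \eqref{e5.5}), or you should simply adopt the paper's reduction to $H$, which sidesteps $E$-valued stochastic integration entirely. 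The rest of your outline (zero expectation of the first-order martingale term via $\F_{t^n_j}$-measurable functionals, vanishing of the mixed second-order terms, the freezing argument, and the remainder estimate via uniform continuity of $D^2\varphi$ on the compact range of the path) is correct modulo those bounds.
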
 
\begin{proof}
In view of Lemma \ref{Lbr21}, if $\varphi \in\,C^2_b(E)$, there exists a sequence $\{\varphi_n\}_n \in\,\nat \subset C^1_b(H)$ such that
 $$
  \begin{array}{l}
\ds  \lim_{n\to\infty}\varphi_n(x)=\varphi(x),\ \ \ x\in E\\
\\
\ds  \lim_{n\to\infty}\langle y,D\varphi_n(x)\rangle_H =\le<y,D\varphi(x)\r>_E,\ \ \ x,y\in E\\
\\
\ds  \lim_{n\to\infty}\langle y,D^2\varphi_n(x)y\rangle_H =\le<y,D^2_E\varphi(x)y\r>_E,\ \ \ x,y\in E.
\end{array}
 $$
 Consequently
  \begin{equation}
  \label{eA.7}
  \lim_{n\to\infty}\mathcal  L\varphi_n(x)=\mathcal  L_E \varphi(x),\ \ \ x\in E.
  \end{equation} 

 Now,  by It\^o' s formula \eqref{eA.2}, we have for any $n \in\,\nat$ 
 \begin{equation}
  \label{eA.8}
  \E \varphi_n(X(t)) =\varphi_n(x)+\E\int_0^t \mathcal L\varphi_n(X(s))ds,
  \end{equation} 
and then, letting $n\to\infty$, we get \eqref{eA.4}.
\end{proof}

  \begin{Remark}
\label{rA.2}
\em Let $\varphi \in C^2_b(E)$. Then $\varphi^2 \in C^2_b(E)$ and we have
$$
\le<y,D_E\varphi^2(x)\r>_E=2\varphi(x)\,\le<y,D_E\varphi(x)\r>_E
$$
and
$$
\le<y,D^2_E\varphi^2(x)y\r>_E=2\varphi(x)\,\le<y,D^2_E\varphi(x)y\r>_E+2\le|\le<y,D_E\varphi(x)\r>_E\r|^2.
$$
Consequently
 \begin{equation}
  \label{eA.9}
\mathcal  L_E \varphi^2(x)=2 \varphi(x)\;\mathcal  L_E \varphi^2(x)+\sum_{k=1}^n\le|\le<\sigma_k(y),D_E\varphi(x)\r>_E\r|^2.
  \end{equation}

\end{Remark}


\begin{thebibliography}{99}

\bibitem{BE}  D. Bakry and M. \`Emery,  {\em  Hypercontractivity for diffusion semigroups}, C. R. Acad. Sci. Paris S\'er. I Math. 299 (1984), pp. 775--778. 

\bibitem{cerrai94}  S. Cerrai,  {\em A Hille--Yosida theorem for weakly
continuous semigroups},  Semigroup Forum 49 (1994),  pp. 349-367.

\bibitem{tesi} S. Cerrai, {\sc Second order PDE's in finite and infinite dimension. A probabilistic approach},
 Lecture Notes in Mathematics 1762, Springer
Verlag, 2001.


\bibitem{cerrai} S. Cerrai, {\em Stochastic reaction-diffusion systems with multiplicative noise and non-Lipschitz reaction term},  Probability Theory and Related Fields 125 (2003),  pp. 271--304.

\bibitem{stabi} S. Cerrai, {\em Stabilization by noise for a class of stochastic
reaction-diffusion equations}, Probability Theory and Related Fields  133 (2005),  pp. 190--214.

\bibitem{cerraisiam} S. Cerrai, {\em Averaging principle for systems of RDEs with polynomial nonlinearities perturbed by multiplicative noise}, Siam Journal of Mathematical Analysis 43 (2011),  pp. 2482-2518.

\bibitem{Abel}  G. Da Prato, {\em Kolmogorov equations for stochastic PDE's with multiplicative noise},
The Abel Symposium 2005, Stochastic Analysis and Applications (2007),  pp. 235--263.

\bibitem{DDG} G. Da Prato, A. Debussche and B. Goldys, {\em Invariant measures of non symmetric dissipative
 stochastic systems}, Probability Theory Related Fields  123 (2002),  pp. 355--380.


 \bibitem{DPZ3} G. Da Prato and J. Zabczyk, {\sc Stochastic equations in infinite 
dimensions,}
 Encyclopedia of Mathematics and its Applications,  Cambridge University
Press, 1992.



\bibitem{DPZ33} G. Da Prato and J. Zabczyk, {\sc Second Order Partial Differential Equations in Hilbert spaces,}  
London  Math. Soc.  Lecture Notes  293, Cambridge University Press, 2003. 


\bibitem{DS} J. D. Deuschel and D. Stroock,
  {\sc Large deviations,} Academic Press, 1984.
  
\bibitem{dmp} C. Donati-Martin, E. Pardoux, {\em White noise driven SPDEs with reflection},  Probability
Theory and Related Fields 95 (1993), pp. 1--24. 
   
    \bibitem{PZ} S. Peszat \& J. Zabczyk,  {\em Strong Feller property and
irreducibility for diffusions on Hilbert spaces,}
Annals of Probability 23 (1995), pp. 157--172.




\bibitem{Priola} E. Priola,   {\em On a class of Markov type semigroups
in spaces of uniformly continuous and bounded functions}, 
Studia Mathematica 136 (1999), pp. 271-295.



\end{thebibliography}
\end{document}